\documentclass[11pt]{amsart}
\usepackage{amsmath}
\usepackage{amssymb}
\usepackage{amsthm}
\usepackage[all]{xy}
\usepackage{graphicx}
\usepackage{stmaryrd}
\usepackage[margin=1in]{geometry}
\usepackage{color}
\usepackage{hyperref}
\usepackage{verbatim}
\usepackage{pb-diagram,pb-xy}

\newcommand{\bpsi}{\boldsymbol\psi}
\newcommand{\bPsi}{\boldsymbol\Psi}

\swapnumbers
\newtheorem*{mthm}{Main Theorem}
\newtheorem*{thmA}{Theorem A}
\newtheorem*{corB}{Corollary B}
\newtheorem*{corC}{Corollary C}
\newtheorem*{corD}{Corollary D}

\newtheorem*{ndefn}{Definition}
\newtheorem*{uclaim}{Claim}
\numberwithin{equation}{subsection}

\newcommand{\cf}[1]
{
\textcolor{blue}{#1} }

\setlength{\parindent}{1cm}
\setlength{\parskip}{0cm}
\linespread{1.2}








\newcommand{\R}{\ensuremath{\mathbb{R}}}
\newcommand{\N}{\ensuremath{\mathbb{N}}}

\newcommand{\Z}{\ensuremath{\mathbb{Z}}}
\newcommand{\Q}{\ensuremath{\mathbb{Q}}}




\newtheorem{theorem}[equation]{Theorem}
\newtheorem{corollary}[equation]{Corollary}
\newtheorem{proposition}[equation]{Proposition}

\newtheorem{lemma}[equation]{Lemma}

\theoremstyle{definition} 
\newtheorem{defn}[equation]{Definition}
\newtheorem{Construction}[equation]{Construction}
\newtheorem{remark}[equation]{Remark} 

\newtheorem{Observation}[equation]{Observation}

\newcommand{\MT}{{\mathsf M}{\mathsf T}}

\newcommand{\mb}[1]{\mathbf{#1}}

\newcommand{\Cob}{{\mathbf{Cob}}}

\DeclareMathOperator*{\colim}{colim\ }

\DeclareMathOperator*{\Int}{Int}

\DeclareMathOperator*{\Diff}{Diff}

\DeclareMathOperator*{\BDiff}{BDiff}

\DeclareMathOperator*{\Hom}{Hom}

\DeclareMathOperator*{\kernel}{Ker}

\DeclareMathOperator*{\rel}{rel}

\DeclareMathOperator*{\Id}{\text{Id}}

\DeclareMathOperator*{\Bun}{\mathrm{Bun}}

\DeclareMathOperator*{\Ob}{\text{Ob}}

\DeclareMathOperator*{\Image}{Im}
\DeclareMathOperator*{\cpt}{lf}


\textwidth=6.6truein
\mathsurround=1pt
\textheight=8.2truein
\topmargin -10pt \headheight 10pt

\author{Fabian Hebestreit}
\address{Mathematisches Institut, Universt\"at Bonn, Endenicher Allee 60, 53115 Bonn, Germany}
\email{f.hebestreit@math.uni-bonn.de}

\author{Nathan Perlmutter}
\address{Stanford University Department of Mathematics, Building 380, Stanford, California,  94305, USA}
\email{nperlmut@stanford.edu}

\title[Cobordism Categories and Moduli Spaces of Odd Dimensional Manifolds]{Cobordism Categories and Moduli Spaces of Odd Dimensional Manifolds}

\begin{document}
\maketitle

\begin{abstract}
We prove that the stable moduli space of $(n-1)$-connected, $n$-parallelizable, $(2n+1)$-dimensional manifolds is homology equivalent to an infinite loopspace for $n \geq 4, n \neq 7$. The main novel ingredient is a version of the cobordism category incorporating surgery data in the form of Lagrangian subspaces. 
\end{abstract}

\setcounter{tocdepth}{1}
\tableofcontents
\vspace*{-5mm}
\section{Introduction}

\subsection{History and Motivation}
The study of diffeomorphisms of smooth manifolds has been a focus of differential topology from its inception. After initial geometric techniques were developed, the method of choice for attacking such automorphism groups was a combination of surgery theory and Waldhausen's A-theory, with the former providing input on block diffeomorphisms and the latter on the difference between block and honest diffeomorphisms, see \cite{WW3} for a modern formulation. With Tillmann's work \cite{T 97} on cobordism categories of surfaces and Madsen and Weiss' proof of the Mumford conjecture \cite{MW 07}, however, a new method emerged, whose application to high dimensional manifolds was pioneered by Galatius and Randal-Williams in \cite{GRW 09, GRW 14, GRW 14c} culminating in \cite{GRW 16}. Their work focuses on manifolds of even dimension and in the simplest case proceeds roughly as follows: Write $d$ for $2n$ or $2n+1$ and set $\theta^n_{d} \colon BO(d)\langle n \rangle \rightarrow BO(d)$ for the classifying map of the universal $d$-dimensional vector bundle on an $n$-connected space. One can then form the monoid (under boundary-connected sum)
\[\mathcal M_{d} = \coprod_{W} {\BDiff}^\partial(W),\]
where the index runs through all $(n-1)$-connected $n$-parallelizable nullcobordisms of $S^{d-1}$; $n$-parallelizable means that the classifying map for the tangent bundle $W \rightarrow BO(d)$ admits a lift along $\theta_{d}^{n} \colon BO(d)\langle n \rangle \rightarrow BO(d)$, or equivalently that $W$ admits a framing on some $n$-skeleton. 
In three largely independent steps, Galatius and Randal-Williams now show that 
\begin{enumerate}
	\item[i)] The \emph{scanning map} of Madsen and Tillmann
	\[\mathcal M_{2n} \longrightarrow \Omega^{\infty}\MT\theta_{2n}^{n}\]
	induces a weak equivalence
	\[\Omega B\mathcal M_{2n} \longrightarrow \Omega^{\infty}\MT\theta_{2n}^{n},\]
	where the right hand side is a certain Thom spectrum, whose homology is readily computable by the standard toolkit of algebraic topology.
	\item[ii)] Denoting by $W^{2n}_{g,1}$ the $g$-fold connected sum of $S^n\times S^n$ with a disc removed the inclusions
	\[{\BDiff}^\partial(W_{g,1}) \longrightarrow \mathcal M_{2n}\]
	induce an isomorphism
	\[\colim_{g \rightarrow \infty} H_*({\BDiff}^\partial(W_{g,1})) \longrightarrow H_*(\Omega_0 B\mathcal M_{2n}),\]
	with structure maps extending diffeomorphisms by the identity outside $W_{g,1}^{2n} \subset W_{g+1,1}^{2n}$.
	\item[iii)] The structure maps 
	\[{\BDiff}^\partial(W_{g,1}) \longrightarrow {\BDiff}^\partial(W_{g+1,1})\]
	in the colimit above induce isomorphism on homology in degrees below $\frac{g-3} 2$.
\end{enumerate}
The results of Galatius and Randal-Williams hold in far greater generality, but a similarly complete description of the homology of diffeomorphism groups in the case of any odd dimensional manifolds is as of yet conjectural at best, even in the highly connected case. Homological stability (i.e. the analogue of the third part) for the diffeomorphism groups of certain odd dimensional manifolds (among them connected sums of $S^n \times S^{n+1}$) was recently established by the second author in \cite{P 15} and \cite{P 16} and part ii) has a well-known (if more complicated) analogue, as we shall explain below. The first statement, however, does not yet have a meaningful replacement. Indeed, due to the vanishing of certain characteristic classes (by work of Ebert \cite{E 09}) it was previously known that $\Omega_0 B \mathcal M_{2n+1}$ is not equivalent to $\Omega_0^\infty \MT\theta_{2n+1}^n$ or any close variant. Thus it was an open question, whether the homology of stabilized diffeomorphism groups in odd dimensions is governed by a spectrum at all. The main result of the present paper answers this affirmatively:

\begin{mthm}
For $n \geq 4$ an integer except $7$ the group completion $\Omega B \mathcal M_{2n+1}$ of $\mathcal M_{2n+1}$ carries an infinite loop space structure.
\end{mthm}

Let us remark that the monoid $\mathcal M_d$ is well-known to carry an $E_d$-structure but is not expected to carry even an $E_{d+1}$-structure before group completion (we shall, however, not make use of these structures in the remainder and therefore will say nothing further about this point). We will exhibit an explicit infinite loop space structure on $\Omega B\mathcal M_{2n+1}$ by modifying the definition of cobordism categories in odd dimensions, see Section \ref{statement} below. To explain this construction, which is probably more important than the result itself, and to place our results in context, we begin by presenting in Section \ref{section: even dimensional outline} a more detailed sketch of the techniques from \cite{GRW 14}, \cite{GRW 09} and \cite{GMTW 08} used to establish the first two statements of the list above, explain in Section \ref{subsection: intro odd dim case} their failure in odd dimensions and our solution in Section \ref{statement}.

Note that our result makes the situation for odd dimensions analogous to that in the surface case in the 1990's, where Tillman in \cite{T 97} showed the homotopy type of the group completion of $\mathcal M_2$ and thus the homology of $\colim {\BDiff^\partial}(\Sigma_{g,1})$ as $g$ goes to infinity to be that of an infinite loop space, whereas the homotopy type of the underlying spectrum was only identified much later by Madsen and Weiss in \cite{MW 07}.

\subsection{The even dimensional case} \label{section: even dimensional outline}
For any map $\theta: B \longrightarrow BO(d)$ with $d = 2n$ or $2n+1$, recall the cobordism category $\Cob_{\theta}$ defined in \cite{GMTW 08}: Objects of $\Cob_{\theta}$ are given by $(d-1)$-dimensional, closed submanifolds $M \subset \R^{\infty}$, equipped with a bundle map $\ell_{M}: TM\oplus\epsilon^{1} \longrightarrow \theta^{*}\gamma^{d}$, i.e. a $\theta$-structure. 
A morphism between objects $M$ and $N$ is given by a $d$-dimensional embedded cobordism $W \subset [0, t]\times\R^{\infty}$, equipped with a bundle map $\ell_{W}: TW \longrightarrow \theta^{*}\gamma^{d}$ that restricts to $\ell_{M}$ and $\ell_{N}$ over the boundary. 
The category $\Cob_{\theta}$ is topologized so that for each $M, N \in \Ob\Cob_{\theta}$ there is a weak homotopy equivalence
$$
\Cob_{\theta}(M, N) \; \simeq \; \coprod_{W}\textstyle{\BDiff_{\theta}}(W, M\sqcup N), 
$$
where the union ranges over all diffeomorphism classes of compact manifolds $W$ equipped with a specified identification $\partial W \cong M\sqcup N$. 
The main theorem from \cite{GMTW 08} yields a weak equivalence 
\[\Omega B\Cob_{\theta} \simeq \Omega^{\infty}\MT\theta,\]
where $\MT\theta$ is the Thom spectrum associated to the virtual vector bundle $-\theta^{*}\gamma^{d}$ over $B$.

Now for any $(n-1)$-connected nullcobordism $W$ of $S^{d-1}$ the space of extensions of a fixed $\theta^n_{d}$-structure on $S^{d-1}$ to $W$ is either empty or contractible, hence in the latter case
\[{\BDiff}^\partial(W) \simeq {\BDiff}_{\theta_{d}^{n}}(W \cup_{\partial} D^{d},D^{d}).\]
Removing two disks from the fixed disk on the right hand side therefore produces a multiplicative map
\[\mathcal{M}_{d} \longrightarrow \Cob_{\theta_d^{n}}(S^{d-1}, S^{d-1}),\]
where the right-hand side is the endomorphism monoid on the standard sphere $S^{d-1} \subset \R^{\infty}$, equipped with its essentially unique $\theta^{n}_{d}$-structure compatible with the orientation. 

The scanning map of i) above can then be factored as 
\[\xymatrix{ 
\Omega B \mathcal M_{d} \ar[r] \ar[d] & \Omega^\infty\MT\theta_{d}^{n} \\
                     \Omega B \Cob_{\theta_{d}^n}(S^{d-1},S^{d-1}) \ar[r] &\Omega B\Cob_{\theta_{d}^{n}}. \ar[u]_{\simeq}
}\]
The majority of the technical work in \cite{GRW 14} is then devoted to establishing that the composite
\[ B\mathcal{M}_{2n} \longrightarrow  B \Cob_{\theta_{d}^n}(S^{d-1},S^{d-1}) \longrightarrow B\Cob_{\theta_{2n}^{n}}\]
is a weak equivalence onto a path component (the term in the middle needs modification to make this true for the individual maps as well). This result is achieved via a sequence of parametrized surgery arguments, making first the morphisms and then the objects ever higher connected. 

Part ii) is obtained by an application of the group completion theorem of McDuff and Segal \cite{MS 75}. It shows that
\[H_*(\Omega B\mathcal M_{d}) = H_*(\mathcal M_{d})[\mathcal W_{d}^{-1}],\]
the localization taken with respect to ${\mathcal W_{d}} = \pi_0(\mathcal M_d) \subseteq H_0(\mathcal M_d)$. By a result of Kreck, the monoid $\mathcal W_{2n}$ is generated under saturation by the single element $s = [W^{2n}_{1,1}]$, which implies 
\[H_*(\mathcal M_{2n})_{\mathcal W_{2n}} = \colim\left(H_*(\mathcal M_{2n}) \stackrel{\cdot s} \longrightarrow H_*(\mathcal M_{2n})  \stackrel{\cdot s} \longrightarrow \dots \right).\]
Denoting by $\mb{B}_\infty^{2n}$ the path component of $D^{2n}$ in the homotopy colimit over 
\[\mathcal M_{2n} \stackrel{W_{1,1}^{2n}}\longrightarrow \mathcal M_{2n} \stackrel{W_{1,1}^{2n}}\longrightarrow \dots\]
we obtain a homology isomorphism \[\mb B_\infty^{2n} \rightarrow \Omega_0B\mathcal M_{2n},\]
which gives assertion ii). We will refer to the space $\mb{B}_{\infty}^{2n}$ as the \textit{stable moduli space of highly-connected manifolds} of dimension $2n$. 

As it will play no role in the paper, let us refrain from expounding the proof of part iii).

\subsection{The odd-dimensional case} \label{subsection: intro odd dim case}
It is tempting to try to carry out a similar program to study the stable moduli spaces of odd-dimensional manifolds. While it is not true that $\mathcal W_{2n+1}$ is obtained from a single element under saturation, the localization $H_*(\mathcal M_{2n+1})_{\mathcal W_{2n+1}}$ can still be computed as a colimit: Fixing a system of generators $\{s_i\}_{i \in \N}$ for $\mathcal W_{2n+1}$ (it is countable), we have
\[H_*(\mathcal M_{2n+1})_{\mathcal W_{2n+1}} = \colim\left(H_*(\mathcal M_{2n+1}) \stackrel{\cdot s_1}\longrightarrow H_*(\mathcal M_{2n+1}) \stackrel{\cdot s_1^2s_2}\longrightarrow H_*(\mathcal M_{2n+1})\stackrel{\cdot s_1^3s_2^2s_3}\longrightarrow \dots \right).\]
Again form the stable moduli space of manifolds $\mb{B}_{\infty}^{2n+1}$ as the path component of the disk $D^{2n+1}$ in the corresponding homotopy colimit over $\mathcal M_{2n+1}$. In replacement of statement ii) one obtains a homology isomorphism
\[\mb B^{2n+1}_\infty \longrightarrow \Omega_0 B\mathcal M_{2n+1}.\]
By homotopy commutativity of $\mathcal{M}_{2n+1}$, the homotopy type of the limiting space $\mb{B}^{2n+1}_{\infty}$ does not depend on the choice of generating set.

However, the arguments from \cite{GRW 14} cannot be used to prove that the map
$$B\mathcal{M}_{2n+1} \longrightarrow B\Cob_{\theta_{2n+1}^n}$$
is the inclusion of a path component, as the techniques employed by Galatius and Randal-Williams
meet a surgery obstruction when applied in the middle dimension of even dimensional manifolds (in the step making the objects of $\Cob_{\theta_{2n+1}^n}$ into homotopy spheres). 
This failure is not a mere technicality since as mentioned above it had previously been established by Ebert in \cite{E 09} that for an odd dimensional manifold the scanning map is never injective in rational cohomology, not even in a range. 
Thus, in order to complete the picture in odd dimensions, replacements for the cobordism category $\Cob_{\theta}$ and the spectrum $\MT\theta$ are needed.

\subsection{Statement of results}\label{statement}
Let $n \neq 1,3,7$; this restriction implies the existence of the quadratic form $\mu$ used below. The following category is the main object of study: 

\begin{ndefn}
The topological category $\Cob^{\mathcal{L}}_{2n+1}$ has as its objects pairs $(M, L)$ that satisfy the following conditions:
\begin{enumerate} 
\item[(i)] $M$ is an object of $\Cob_{\theta_{2n+1}^n}$, i.e.\ $M \subset \R^{\infty}$ is a $2n$-dimensional closed submanifold equipped with a $\theta_{2n+1}^n$-structure.
\item[(ii)] $L \leq H_{n}(M)$ is a \textit{Lagrangian subspace} with respect to the intersection and self-intersection form $(H_{n}(M), \lambda, \mu)$. 
By \textit{Lagrangian} we mean that $L^{\perp} = L$ with respect to $\lambda$ and $\mu_{|L} = 0$.
\end{enumerate}
The morphism space $\Cob^{\mathcal L}_{2n+1}((M,L_{M}),(N,L_{N}))$ is the following subspace of $\Cob_{\theta_{2n+1}^n}(M,N)$. 
A cobordism $W \subseteq [0,t]\times\R^\infty$ from $M$ to $N$ is a morphism in $\Cob^{\mathcal L}_{2n+1}((M,L_{M}),(N,L_{N}))$ if:
\begin{enumerate} 
\item[(a)] The pair $(W, N)$ is $(n-1)$-connected;
\item[(b)] $\iota^{\text{in}}(L_{M}) \; = \; \iota^{\text{out}}(L_{N}),$
where
$\iota^{\text{in}}: H_{n}(M) \longrightarrow H_{n}(W)$ and $\iota^{\text{out}}: H_{n}(N) \longrightarrow H_{n}(W)$ are the maps induced by the boundary inclusions.
\end{enumerate} 
\end{ndefn}

Since $H_{n}(S^{2n}) = 0$, we obtain a factorization
\[\mathcal{M}_{2n+1} \longrightarrow \Cob^{\mathcal L}_{2n+1}(S^{2n}, S^{2n}) \longrightarrow  \Cob_{\theta^n_{2n+1}}(S^{2n}, S^{2n})\]
and thus a map $\Omega B \mathcal M_{2n+1} \longrightarrow \Omega B \Cob^{\mathcal L}_{2n+1}$.
The following theorem is our main technical result:

\begin{thmA}
Let $n \geq 4$ be an integer except $7$.  
Then the map 
$$
\Omega B\mathcal{M}_{2n+1} \longrightarrow \Omega B\Cob^{\mathcal{L}}_{2n+1}
$$
just described is a weak homotopy equivalence.
\end{thmA}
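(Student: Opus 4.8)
The plan is to adapt the parametrized surgery machinery of Galatius and Randal--Williams from \cite{GRW 14} and \cite{GRW 09} to the category $\Cob^{\mathcal{L}}_{\theta^n}$, the Lagrangian labels supplying exactly the coherent bookkeeping needed to push surgeries through the middle dimension. I would work throughout in the sheaf/``$\bpsi$''-model, so that $B\Cob^{\mathcal{L}}_{\theta^n}$ is identified with a space $\bPsi^{\mathcal{L}}$ of properly embedded $(2n+1)$-dimensional $\theta^n$-manifolds in $\R^{\infty}\times\R$ whose vertical slices carry Lagrangian labels, and all steps become deformations and semi-simplicial resolutions of such spaces. \emph{Step 1 (surgery on morphisms).} As in \cite{GRW 14}, one first makes the morphisms as highly connected as Poincar\'e--Lefschetz duality permits: for each $k\le n-1$ one resolves the relevant part of $\bPsi^{\mathcal{L}}$ by spaces of disjoint embedded surgery data of index $\le k$ in the interiors of the cobordisms together with compatible $\theta^n$-structures, observes that these spaces are contractible since the obstructions to building and extending $\theta^n$-structures over cores and cocores lie in $\pi_{\le n}(B)=0$, and performs the surgeries; iterating reduces to the subcategory in which every morphism $(W,N)$ has $(W,M)$ $n$-connected and the relative homology of $(W,N)$ concentrated in degree $n$. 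The only point beyond \cite{GRW 14} is the trivial observation that interior surgery on a cobordism disturbs neither the boundary Lagrangians nor condition~(b).

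\emph{Step 2 (surgery on objects along Lagrangians).} This is the heart of the argument and where the Lagrangian labels earn their keep. By an inner induction on the connectivity $0\le k\le n$ of the objects, the steps $k<n$ are again standard surgery below the middle dimension. For the passage $k=n-1\to k=n$ one must, in a family, replace an object $(M,L)$ with $M$ already $(n-1)$-connected by a homotopy $2n$-sphere. The recipe is dictated by the data: surger $M$ along a basis of $L\le H_n(M)$. This is possible because $\mu_{|L}=0$ forces each such class to be represented by an embedding $S^n\hookrightarrow M$ with trivial normal bundle (Wall; valid as $n\ge 3$); because $L^{\perp}=L$ the surgered manifold has vanishing middle homology and is hence a homotopy $2n$-sphere; and the trace $W$ of these surgeries is a bona fide morphism of $\Cob^{\mathcal{L}}_{\theta^n}$ whose outgoing Lagrangian is necessarily $0$ and which satisfies condition~(b) since the attaching spheres kill precisely $\iota^{\mathrm{in}}(L)$ in $H_n(W)$. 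To carry this out in a parametrized family I would build a semi-simplicial resolution of the object part of $\bPsi^{\mathcal{L}}$ by spaces of \emph{Lagrangian surgery data} --- tuples of disjoint embeddings $S^n\times D^n\hookrightarrow M$ realizing a basis of a Lagrangian subspace, together with $\theta^n$-structure data --- and prove these spaces are highly connected. \textbf{This connectivity estimate is the step I expect to be the main obstacle.} Unlike the subcritical case it is not formal: it concerns spaces of embedded \emph{middle-dimensional} spheres, where the quadratic form $\mu$ and codimension-$n$ knotting phenomena intervene, and it is precisely here that the hypotheses $n\ge 4$ and $n\ne 7$ (by the Remark in the statement, exactly those under which $\mu$ exists) are needed in order to compare with the corresponding space of formal/immersed data and conclude. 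Threading the Lagrangian labels through this resolution is the structural reason the Lagrangian had to be built into the objects of the category in the first place.

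\emph{Step 3 (identification with $\mathcal{M}_{2n+1}$).} After Steps 1 and 2, $B\Cob^{\mathcal{L}}_{\theta^n}$ is computed by the full subcategory on $n$-connected $\theta^n$-objects with highly connected morphisms. Since the space of $\theta^n$-structures on an $(n-1)$-connected, $n$-parallelizable manifold with prescribed boundary behaviour is contractible, and since --- using surgery below the middle dimension on a $\theta^n$-nullbordism of a homotopy $2n$-sphere --- every $n$-connected $\theta^n$-object receives a highly connected cobordism from $S^{2n}$, a standard argument in the style of \cite{GRW 14} and \cite{GMTW 08} identifies $B\Cob^{\mathcal{L}}_{\theta^n}$ with the classifying space of the endomorphism monoid $\Cob^{\mathcal{L}}_{\theta^n}(S^{2n},S^{2n})$. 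Finally, capping the two boundary spheres of a highly connected cobordism $S^{2n}\to S^{2n}$ with disks, together with the same contractibility of $\theta^n$-structure spaces, identifies this endomorphism monoid, as a homotopy-commutative monoid, with $\mathcal{M}_{2n+1}$, compatibly with the map $\mathcal{M}_{2n+1}\hookrightarrow\Cob^{\mathcal{L}}_{\theta^n}(S^{2n},S^{2n})$ of the Definition. Chasing the identifications of Steps 1--3 then exhibits the map $\Omega B\mathcal{M}_{2n+1}\to\Omega B\Cob^{\mathcal{L}}_{\theta^n}$ as a composite of weak homotopy equivalences.
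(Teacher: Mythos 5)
Your proposal follows essentially the same route as the paper: pass to the appropriate semi-simplicial models, do parametrized surgery on the objects using the Lagrangian data to govern the middle-dimensional step, and then identify the resulting $n$-connected subcategory with the monoid $\mathcal{M}_{2n+1}$ via connectedness of the object space and contractibility of $\theta^n$-structure spaces. You have correctly isolated the real technical content: the high connectivity of the space of middle-dimensional Lagrangian surgery data, which the paper establishes in Sections 9--10 using Wall's embedding theorem, the Haefliger--Hudson embedding theorems, and a half-Whitney trick --- this is exactly where $n\geq 4$ enters, and it is where the quadratic refinement $\mu$ (forcing $n\neq 7$) is used via Wall's theorem to produce embedded spheres with trivial normal bundle.

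One structural difference worth noting: your Step~1 (parametrized surgery on the morphisms to make them highly connected) is unnecessary in the paper's setup, because $\Cob^{\mathcal L}_\theta$ is \emph{defined} so that every morphism is already an $(n-1)$-connected cobordism relative to its outgoing boundary; the surgery-on-morphisms theorem of Galatius--Randal-Williams is invoked once, upstream, to identify $B\Cob_\theta$ with $B\Cob^{\mathbf m}_\theta$, and the Lagrangian category is built on top of the latter. Building the connectivity into the definition is a genuine simplification --- it means one never has to verify that interior surgery on cobordisms is compatible with the Lagrangian conditions in a family, a check which, while plausible as you assert, would require its own semi-simplicial argument. Your observation that the outgoing Lagrangian of the trace of surgeries on a basis of $L$ is necessarily zero, and that the attaching disks kill $\iota^{\mathrm{in}}(L)$ in the trace so that condition~(b) holds, is precisely the conceptual content of the paper's Theorem~\ref{theorem: surgery on objects in degree n}.
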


The operation of disjoint union almost makes $\Cob^{\mathcal{L}}_{2n+1}$ into a symmetric monoidal category (almost, due to embedded cobordism having a length). More precisely, we endow $B\Cob^{\mathcal{L}}_{\theta}$ with the structure of a special $\Gamma$-space. Applying the results of Segal \cite{S 74}, we therefore obtain a (connective) spectrum $\MT\mathcal L_{2n+1}$, together with a map $\MT\mathcal L_{2n+1} \rightarrow \MT\theta_{2n+1}^n$, such that:

\begin{corB}
Let $n \geq 4$ be an integer except $7$.  
Then the scanning map factors through an equivalence $\Omega B \mathcal M_{2n+1} \rightarrow \Omega^{\infty}\MT\mathcal L_{2n+1}$.
\end{corB}

As explained in the previous sections one can now deduce information about stable diffeomorphism groups by applying the group completion theorem.

\begin{corC}
Let $n \geq 4$ be an integer except $7$. 
Then the stable moduli space of manifolds $\mb{B}^{2n+1}_{\infty}$ has the homology type of the infinite loopspace $\Omega^{\infty}_0\MT\mathcal L_{2n+1}$. 
\end{corC}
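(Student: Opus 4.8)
The plan is to derive Corollary~C formally from Corollary~B by means of the group completion theorem, which has in fact already been invoked in the discussion immediately preceding the statement. The only thing that needs to be added is the observation that the basepoint component of an infinite loopspace is again an infinite loopspace, so that a homology equivalence with such a component already gives $\mb{B}_\infty$ the homology type of an infinite loopspace. Concretely, I would combine three inputs: that $\mb{B}_\infty$ is connected, that $\mb{B}_\infty \longrightarrow \Omega_{0}B\mathcal{M}_{2n+1}$ is a homology equivalence, and that $\Omega B\mathcal{M}_{2n+1}$ is an infinite loopspace by Corollary~B.

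In more detail, I would first record that every classifying space $\BDiff(W, D^{2n+1})$ is path-connected, so the colimit $\mb{B}_\infty$ is path-connected as well. Next I would apply the group completion theorem of McDuff and Segal to the homotopy-commutative topological monoid $\mathcal{M}_{2n+1}$, obtaining an isomorphism $H_*(\mathcal{M}_{2n+1})[\pi_0^{-1}] \cong H_*(\Omega B\mathcal{M}_{2n+1})$. Since the chosen system $\{W_i\}$ generates $\mathcal{W}_{2n+1} = \pi_0(\mathcal{M}_{2n+1})$, the partial products $W_1 \cdots W_k$ form a cofinal sequence in $\pi_0$, so the homology of the colimit $\mb{B}_\infty$ taken along multiplication by these elements computes precisely the localization $H_*(\mathcal{M}_{2n+1})[\pi_0^{-1}]$. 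Hence the natural map $\mb{B}_\infty \longrightarrow \Omega_{0}B\mathcal{M}_{2n+1}$ is a homology isomorphism, both sides being connected; homotopy commutativity of $\mathcal{M}_{2n+1}$, as already noted in the text, makes this independent of the choice of generating set.

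Finally, Corollary~B asserts that $\Omega B\mathcal{M}_{2n+1}$ is weakly equivalent to $\Omega^{\infty}E$ for some spectrum $E$ (the one arising from the special $\Gamma$-space structure on $B\Cob^{\mathcal{L}}_{\theta^n}$ transported across Theorem~A); its basepoint component $\Omega_{0}B\mathcal{M}_{2n+1}$ is then weakly equivalent to $\Omega^{\infty}(\tau_{\geq 1}E)$, the infinite loopspace of the $1$-connective cover of $E$. Composing with the homology equivalence of the previous step shows that $\mb{B}_\infty$ has the homology type of the infinite loopspace $\Omega^{\infty}(\tau_{\geq 1}E)$, as claimed. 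I do not anticipate any genuine obstacle here: the one point requiring a little care is the bookkeeping in the group completion theorem, namely checking that the particular direct system defining $\mb{B}_\infty$ (multiplication by $W_1 \cdots W_k$ at the $k$-th stage) is cofinal enough in $\pi_0(\mathcal{M}_{2n+1})$ to realize the full homology localization, and that the resulting colimit lies in a single path component; both are immediate from the stated hypotheses.
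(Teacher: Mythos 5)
Your argument is correct and matches the paper's own proof: the paper likewise deduces Corollary~C by combining the group-completion homology equivalence $\mb{B}_\infty \to \Omega_0 B\mathcal{M}_{2n+1}$ (Proposition \ref{grp-cplt}) with the fact that $\Omega B\mathcal{M}_{2n+1}$ is an infinite loopspace, which is exactly Corollary~B (itself derived from Theorem~A and Theorem \ref{theorem: infinite loopspace structure}). Your added remarks about connectedness of $\mb{B}_\infty$ and passage to the connective cover $\tau_{\geq 1}E$ only make explicit what the paper leaves implicit, so this is the same route.
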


Finally, we can say a little bit about the cohomology of $\Omega^\infty\MT\mathcal L_{2n+1}$: Denoting the scanning map 
$\BDiff(W, D^{2n+1})  \longrightarrow  \Omega^{\infty}_{0}\MT\theta^{n}_{2n+1}$
for $W \in \mathcal{W}_{2n+1}$ by $\mathcal P_W$ and
$F: \Omega^{\infty}\MT\mathcal L_{2n+1} \longrightarrow \Omega^{\infty}\MT\theta^{n}_{2n+1}$
the infinite loop map induced by applying $\Omega$ to the composite
$$\xymatrix{B\Cob^{\mathcal{L}}_{2n+1} \ar[r] & B\Cob_{\theta^{n}_{2n+1}} \ar[r]^-{\simeq} & \Omega^{\infty-1}\MT\theta^{n}_{2n+1},
}$$
we find: 

\begin{corD}
Let $n \geq 4$ be an integer except $7$. 
Then the kernel of the homomorphism
$$F^{*}: H^{*}(\Omega^{\infty}_{0}\MT\theta^{n}_{2n+1}; \Q) \; \longrightarrow \; H^{*}(\Omega^{\infty}_0\MT\mathcal L_{2n+1}; \Q)$$
is equal to the common kernel of the collection of the maps 
$$\mathcal{P}^{*}_{W}: H^{*}(\Omega^{\infty}_{0}\MT\theta^{n}_{2n+1}; \Q) \longrightarrow H^{*}(\BDiff(W, D^{2n+1}; \Q))$$
for all $W \in \mathcal{W}_{2n+1}$.
\end{corD}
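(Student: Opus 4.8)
The plan is to pass to rational homology, where the two kernels become annihilators of explicit subspaces, and then to compute those subspaces using Theorem A and Corollary C. Since $\Q$ is a field, $H^{n}(-;\Q)$ is the linear dual of $H_{n}(-;\Q)$ for every $n$, and for any subspace $U$ of a $\Q$-vector space one has $U^{\perp\perp}=U$, where $(-)^{\perp}$ denotes the annihilator. Therefore $\ker F^{*}$ is exactly the annihilator of $\Image(F_{*})$, while $\bigcap_{W}\ker\mathcal{P}^{*}_{W}$ is the annihilator of $\sum_{W}\Image((\mathcal{P}_{W})_{*})$, so the corollary is equivalent to the equality of subspaces of $H_{*}(\Omega^{\infty}_{0}\MT\theta^{n};\Q)$
$$
\Image\bigl(F_{*}\colon H_{*}(\Omega_{0}B\Cob^{\mathcal{L}}_{\theta^{n}};\Q)\to H_{*}(\Omega^{\infty}_{0}\MT\theta^{n};\Q)\bigr)\;=\;\sum_{W\in\mathcal{W}_{2n+1}}\Image\bigl((\mathcal{P}_{W})_{*}\bigr).
$$

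To evaluate the left-hand side I would string together the available identifications. Theorem A gives a weak homotopy equivalence $\Omega_{0}B\mathcal{M}_{2n+1}\xrightarrow{\ \simeq\ }\Omega_{0}B\Cob^{\mathcal{L}}_{\theta^{n}}$, and the group completion theorem behind Corollary C gives a homology equivalence $\mathbf{B}_{\infty}\to\Omega_{0}B\mathcal{M}_{2n+1}$, where $\mathbf{B}_{\infty}=\colim_{k}\BDiff(W_{1}^{\#k}\#\cdots\# W_{k},D^{2n+1})$; since homology commutes with sequential colimits, $H_{*}(\Omega_{0}B\Cob^{\mathcal{L}}_{\theta^{n}};\Q)\cong\colim_{k}H_{*}(\BDiff(W_{1}^{\#k}\#\cdots\# W_{k},D^{2n+1});\Q)$. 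The point requiring care is that under these identifications $F$ corresponds to the colimit of the scanning maps $\mathcal{P}_{W_{1}^{\#k}\#\cdots\# W_{k}}$; equivalently, that for each $k$ the composite
$$
\BDiff(W_{1}^{\#k}\#\cdots\# W_{k},D^{2n+1})\longrightarrow\mathbf{B}_{\infty}\longrightarrow\Omega_{0}B\Cob^{\mathcal{L}}_{\theta^{n}}\xrightarrow{\ F\ }\Omega^{\infty}_{0}\MT\theta^{n}
$$
is homotopic to the scanning map of \cite{GRW 14} or \cite{E 09}. This is the odd-dimensional counterpart of the factorisation in diagram~(\ref{equation: scanning square}). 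Granting it, $\Image(F_{*})=\bigcup_{k}\Image((\mathcal{P}_{W_{1}^{\#k}\#\cdots\# W_{k}})_{*})$, an increasing union whose $k$-th term is one of the summands on the right of the displayed identity, since $W_{1}^{\#k}\#\cdots\# W_{k}\in\mathcal{W}_{2n+1}$. This proves the inclusion $\subseteq$.

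For the reverse inclusion $\supseteq$, fix $W\in\mathcal{W}_{2n+1}$. Since $\{W_{i}\}_{i\in\N}$ generates $\mathcal{W}_{2n+1}$ and each generator occurs with unbounded multiplicity among the manifolds $W_{1}^{\#k}\#\cdots\# W_{k}$, for $k$ large there is $V\in\mathcal{W}_{2n+1}$ with $W\# V\cong W_{1}^{\#k}\#\cdots\# W_{k}$. Gluing in $V$ along the fixed disk defines $j\colon\BDiff(W,D^{2n+1})\to\BDiff(W\# V,D^{2n+1})$, and since composition of cobordisms in $\Cob^{\mathcal{L}}_{\theta^{n}}$ becomes loop concatenation in $\Omega B\Cob^{\mathcal{L}}_{\theta^{n}}$, which is carried to the loop sum on $\Omega^{\infty}\MT\theta^{n}$, the composite $\mathcal{P}_{W\# V}\circ j$ is homotopic to $\mathcal{P}_{W}$ followed by translation by a fixed element $a\in\Omega^{\infty}_{0}\MT\theta^{n}$ (represented by the cobordism of $V$). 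As $\Omega^{\infty}_{0}\MT\theta^{n}$ is a path-connected $H$-space, translation by $a$ is homotopic to the identity, so $(\mathcal{P}_{W\# V}\circ j)_{*}=(\mathcal{P}_{W})_{*}$ on rational homology. Hence
$$
\Image((\mathcal{P}_{W})_{*})=\Image\bigl((\mathcal{P}_{W\# V}\circ j)_{*}\bigr)\subseteq\Image((\mathcal{P}_{W\# V})_{*})=\Image\bigl((\mathcal{P}_{W_{1}^{\#k}\#\cdots\# W_{k}})_{*}\bigr)\subseteq\Image(F_{*}),
$$
and summing over all $W$ gives $\supseteq$. Combining the two inclusions yields the displayed equality, hence the corollary.

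The step I expect to be the main obstacle is the compatibility claim of the second paragraph — that the homology isomorphisms furnished by Theorem A and by the group completion theorem intertwine $F_{*}$ with the colimit of the scanning maps $(\mathcal{P}_{W_{1}^{\#k}\#\cdots\# W_{k}})_{*}$. This is not a new idea, but it requires carefully identifying the classical scanning map with its factorisation through the cobordism category and tracking basepoints and $H$-space structures through the group completion theorem; once that is in place, the remaining arguments are formal.
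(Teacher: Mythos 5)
Your argument is correct, but it takes a noticeably longer and more hands-on route than the paper. After dualizing to homology, you prove the equality $\Image(F_{*})=\sum_{W}\Image((\mathcal{P}_{W})_{*})$ by two separate inclusions, with the reverse inclusion requiring the auxiliary translation-by-$a$ argument in the $H$-space $\Omega^{\infty}_{0}\MT\theta^{n}$. The paper instead works directly in cohomology: it exhibits one commutative square, with $\coprod_{W}\BDiff(W,D^{2n+1})$ mapping to $\mathbf{B}_{\infty}$ on one side (surjective on $H_{*}(-;\Q)$, hence injective on $H^{*}(-;\Q)$) and to $\Omega_{0}B\Cob_{\theta^{n}}\simeq\Omega^{\infty}_{0}\MT\theta^{n}$ via the scanning maps on the other, with $\mathbf{B}_{\infty}\to\Omega_{0}B\Cob^{\mathcal{L}}_{\theta^{n}}\xrightarrow{F}\Omega^{\infty}_{0}\MT\theta^{n}$ closing the square. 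A one-line chase then gives $\ker F^{*}=\bigcap_{W}\ker\mathcal{P}^{*}_{W}$, with both inclusions falling out simultaneously from commutativity plus the injectivity of the colimit map on rational cohomology. The compatibility you flag at the end — that $F$ is intertwined with the scanning maps under the identifications of Theorem A and Corollary C — is exactly the commutativity of the paper's square and is needed in both approaches; neither your proposal nor the paper reproves it, both deferring to the factorisation established in \cite{GRW 14} and \cite{E 09}. What your approach buys is a more explicit description of the two subspaces of $H_{*}(\Omega^{\infty}_{0}\MT\theta^{n};\Q)$ being compared; what it costs is the extra bookkeeping of the colimit system, the choice of complementary $V$ with $W\#V$ in the stabilisation sequence, and the $H$-space translation lemma, all of which the paper's cohomological diagram chase renders unnecessary.
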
 

It follows from the main result of \cite{E 09} that the common kernel of the maps
$$\mathcal{P}^{*}_{W}: H^{*}(\Omega^{\infty}_{0}\MT\theta^{n}; \Q) \longrightarrow H^{*}(\BDiff(W, D^{2n+1}); \Q)$$ 
contains the \emph{tautological classes} associated to Hirzebruch's $\mathcal L$-polynomials, and it is a simple calculation that most of these do not vanish in the source.

\subsection{Remarks}
While our construction of the category $\Cob^\mathcal L_{2n+1}$ works equally well for arbitrary $n$-connected $\theta$ it is not currently clear how to define it in general. The reason is the non-existence of a self-intersection form at the homology level. A possible solution is suggested by the work of Ranicki, i.e. to work directly with the chains on a manifold rather than its homology. At the chain level all relevant refinements of the intersection pairing should be present without connectivity assumptions. Carrying out such a generalization is beyond the scope of the present paper, however. The necessary chain level constructions are part of a large joint project of the first author with Calm\`es, Dotto, Harpaz, Land, Moi, Nardin, Nikolaus and Steimle \cite{CDHHLMNNS}, which in particular defines cobordism categories of Poincar\'e chain complexes $\Cob_d(R)$ for any commutative ring $R$, and relates their homotopy types to Grothendieck-Witt spectra. The category $\Cob_d^{\mathcal L}$ is then closely related to the over-category of $0$ with respect to an evident functor $\Cob_{\theta^n_{2n+1}} \rightarrow \Cob_{2n+1}(\mathbb Z)$ taking a manifold to its chains. This perspective also suggests that (at least for highly connected manifolds) Ebert's vanishing result can be strengthened to include classes originating in the Grothendieck-Witt spectrum $\mathrm{GW}(\mathbb Z)$; the tautological classes of Hirzebruch's $\mathcal L$-polynomials originate further in the $L$-theory $L(\mathbb Z)$ of the integers (and one of the main results of \cite{CDHHLMNNS} is to establish a precise relation between these spectra by way of a forgetful map $\mathrm{GW}(\mathbb Z) \rightarrow \mathrm{L}(\mathbb Z)$). These consequences will be explored in future joint work with Steimle.

The stabilisation with respect to many manifolds instead of just the $W^{2n}_{g,1}$ is also present in \cite{GRW 14} for the treatment of more general tangential structures and manifolds and is therefore not a new phenomenon in odd dimensions. For even dimensions, however, the need for it was later removed as the main result of \cite{GRW 16} by showing that the homology of $\mb B_\infty^{\theta}$ no longer is affected by further stabilisation. An analogue of this result is work in progress and could replace stabilization with respect to all elements of $\mathcal{W}_{2n+1}$, by stabilization with only $S^n \times S^{n+1}$, for which the second author proved homological stability in \cite{P 15}. 
Such results improving on homological stability are largely independent of the present work though. 

Finally, the most pressing question to be addressed in future work is that of the homotopy type of $\MT\mathcal L_{2n+1}$, in particular whether its cohomology contains entirely new classes not coming from $\MT\theta^{n}_{2n+1}$. One would expect such an analysis to be simpler than a direct analysis of the monoid $\mathcal M_{2n+1}$ just as $B\Cob_\theta$ can be identified with $\Omega^{\infty-1}\MT\theta$ using Pontryagin-Thom theory. With the results of \cite{CDHHLMNNS} there are obvious guesses to be made, but we cannot currently offer a precise conjecture based on more than speculation.

\subsection{Organization of the paper}
Section \ref{section: Cobordism categories an spaces of manifolds} is devoted to recollections about spaces of submanifolds and the extension of their definition to include homological data. In Section \ref{section: cobordism categories} we repeat the definition of the usual cobordism category, explain those steps in the program of Galatius and Randal-Williams that also work in odd dimension. In Section \ref{sec 4} we define the category $\Cob^{\mathcal{L}}_{\theta}$ and show how to derive Theorem A and its corollaries from a number of technical results whose proofs make up the rest of the paper: Section \ref{subsection: infinite loopspaces} constructs the $\Gamma$-space structure on $B\Cob^{\mathcal L}_{2n+1}$, Sections \ref{section: A substitute for the cobordism category} and \ref{section: preliminaries on surgery} contain preliminary technicalities, which enable us to apply the parametrized surgery techniques of \cite{GRW 14} in the final three sections.

The paper is not meant to be read in isolation: To avoid lengthy repetitions of established material we shall make frequent use of the terminology and results of the two papers \cite{GRW 09} and \cite{GRW 14}. For example in several places we will only indicate which parts of a definition needs to be changed. The reader is therefore advised to have copies of both papers close by.

\subsection{Acknowledgements}
We heartily thank Oscar Randal-Williams for his encouragement when this project was in its infancy and 
S\o ren Galatius for his ongoing guidance. 
We are also indebted to Boris Botvinnik and Johannes Ebert for their valuable advice and particularly to Diarmuid Crowley and Tibor Macko for pointing out the existence and properties of various quadratic refinements.
FH would like to further thank Markus Land, Stephan Stolz and in particular Daniel Kasprowski for very useful discussions about various subtleties in surgery theory. 
NP thanks Anibal Medina for many enlightening conversations around cobordism categories and surgery theory. 
FH was supported by a scholarship of the German Academic Exchange Service during a year at the University of Notre Dame. 
NP was supported by an NSF Post-Doctoral Fellowship, DMS-1502657, at Stanford University and by the ERC-grant 'KL2MG-interactions' of Wolfgang L\"uck during a visit to the University of Bonn.


\section{Preliminaries on Spaces of Manifolds} \label{section: Cobordism categories an spaces of manifolds}

\subsection{Spaces of manifolds}
We begin by briefly reviewing some basic constructions from \cite{GRW 09} and \cite{GRW 14}. Recall that a \textit{tangential structure} is a map $\theta: B \longrightarrow BO(d)$.  A $\theta$-structure on an $m$-dimensional manifold $M$ (with $m \leq d$) is a bundle map $TM\oplus\epsilon^{d-m} \longrightarrow \theta^{*}\gamma^{d}$ (i.e. a fiberwise linear isomorphism). 

Fix a tangential structure $\theta: B \longrightarrow BO(d)$. Recall, for $U \subseteq \R^{n}$, the space $\bPsi_{\theta, l}(U)$ from \cite[Section 2]{GRW 09}, consisting of pairs $(M, \ell)$ where $M \subseteq U$ is an $l$-dimensional submanifold without boundary and closed as a subspace of $U$, while $\ell$ is a $\theta$-structure on $M$.  These are topologized so that the assignment $U \mapsto \bPsi_{\theta, l}(U)$ defines a sheaf on $\R^{n}$, valued in topological spaces. We will slightly extend the construction of that topology below in Section \ref{subsection: spaces of manifolds with homology data}. As in \cite{GRW 09} we will need to consider particular subspaces of $\bPsi_{\theta, l}(\R^{n})$ consisting of submanifolds $M \subset \R^{n}$ that are open in a fixed number of directions. We repeat \cite[Definition 3.5]{GRW 09}:

\begin{defn} \label{defn: manifolds open in k directions}
For $k \leq n$, $\bpsi_{\theta,l}(n, k) \subset \bPsi_{\theta,l}(\R^{n})$ is
the subspace consisting of those $\theta$-manifolds $(M, \ell)$ such
that $M \subset \R^{k}\times(-1, 1)^{n-k}$.  
The space
$\bpsi_{\theta,l}(\infty, k)$ is defined to be the colimit of
the $\bpsi_{\theta}(n, k)$ taken as $n\to\infty$.  
\end{defn}

For $l = d$, we shall drop the index $l$. Let $x_{1}:  \R\times\R^{\infty} \longrightarrow \R,$ denote the projection onto the first factor. We will often consider $\bpsi_{\theta,l}(\infty, k)$ as the space submanifolds $W \subseteq  \R\times\R^{\infty}$ and for any subset $K \subseteq \R$, we write 
$$W|_{K} = W\cap x_{1}^{-1}(K)$$
If $\ell$ is a $\theta$-structure on $W$ and $W_{|K}$ a submanifold of $W$, then we  write $\ell|_{K}$ for the restriction of $\ell$ to $TW|_{K}$.

\subsection{Homological preliminaries} \label{section: homological preliminaries}
We will need to work with homology groups of elements of $\bPsi_{\theta}(\R^{\infty})$, which are in general non-compact manifolds, and it turns out that for our purposes the locally finite/Borel-Moore homology $H^{lf}_*$ as defined in \cite{Spa} is a convenient set-up. We shall only really have to consider $H^{lf}_k(M,A)$ for an $m$-manifold $M$ with a closed codimension $0$ submanifold $A$ both of which have cylindrical ends (see Definition \ref{defn: cylindrical ends} below). In this case
\[H^{lf}_k(M,A) = \lim_{K \subseteq M}H_{k}(M, A \cup (M\setminus K)),\]
see e.g. by \cite[Theorem 10.1]{Spa} for the absolute case and then use the long exact sequences. We will effectively treat the right hand side as a definition. Recall then that locally finite homology is covariantly functorial in proper maps (by restricting to the final system $f^{-1}(K)$ in the limit defining the source) and contravariantly functorial in open embeddings. In the special case above with $A = 0$ the contravariant functoriality for $V \subseteq M$ is given by
\[\xymatrix{
H^{\cpt}_{k}(M) \ar[r]^-{\cong} & \displaystyle{\lim_{K \subseteq M}}H_{k}(M, M \setminus K) \ar[r] &  \displaystyle{\lim_{K \subseteq V}}H_{k}(M, M \setminus K) & \ar[l]_\cong \displaystyle{\lim_{K \subseteq V}}H_{k}(V, V \setminus K)  \ar[r]^-{\cong} & H^{\cpt}_{k}(V),
}\]
with the middle maps the projection to the indicated components of the limit and excision, respectively. Furthermore, stemming from Poincar\'e duality for non-compact manifolds there are Gysin homomorphisms. We shall only need the following version: Let $j: Z \hookrightarrow M$ be the inclusion of a neat, compact, oriented submanifold of dimension $n$ into $M$, which we also assume oriented and further $U \subseteq M$ a closed tubular neighborhood of $Z$. 
The homomorphism 
\[j_{!}: H^{\cpt}_{k}(M) \; \longrightarrow \; H_{k+n-m}(Z,\partial Z)\]
is defined to be the composition 
$$\xymatrix{
H^{\cpt}_{k}(M) \ar[r] & H_k(M, M \setminus \Int U) & \ar[l]_-\cong H_{k}(U, \partial U) &\ar[l]_\cong H^{m-k}(U) & \ar[l]_\cong H^{m-k}(Z) \ar[r]^{\cong \ \ \ } & H_{k+n-m}(Z,\partial Z),
}$$
where the first map is the canonical projection (of the inverse limit onto one of its factors), the second is excision and the third and fifth arrows are given by Lefschetz duality. Since any two tubular neighborhoods of $Z$ are isotopic (see \cite[Theorem 5.3]{Hi 76}) it follows that the definition of the map $j_{!}$ is independent of the choice of tubular neighborhood $U$. We will call both these types of maps \emph{restrictions} and denote them by 
$$
x \longmapsto x|_{Z}.
$$

\begin{remark}
This leads to very little ambiguity: If the open subset $V \subseteq X$ is given as the interior of some compact codimension $0$ submanifold $Z \subseteq X$ we claim that $H^{\cpt}_k(V)$ and $H_k(Z,\partial Z)$ are canonically isomorphic in a fashion making 
$$\xymatrix{ & H_k^{\cpt}(X) \ar[ld]_{\cdot|_V} \ar[rd]^{\cdot|_Z}& \\
            H_k^{\cpt}(V) \ar@{<->}[rr]_\cong & & H_k(Z,\partial Z)}$$
commutative.
To this end choose an open collar $C$ of $\partial Z$ in $Z$. As $V-C$ is a compact subset of $V$ we obtain a map $H_k^{\cpt}(V) \rightarrow H_k(V,C-\partial Z)$, which we shall momentarily see is an isomorphism (since $V$ has $C-\partial Z$ as its cylindrical ends as defined below). Clearly, $H_k(V,C-\partial Z)$ and $H_k(Z,\partial Z)$ are canonically isomorphic (for example via their inclusions into $H_k(Z,C)$). It is now readily checked that this identification is independent of the chosen collar and the diagram above indeed commutes.
\end{remark}

As mentioned we will mainly work with a particularly simple class of non-compact manifolds: 

\begin{defn}
A manifold $M$ is said to have \textit{cylindrical ends} if there exists some compact codimension-$0$ submanifold $B \subset X$ (possibly with boundary), such that the complement $M\setminus\Int(B)$ is homeomorphic to the cylinder $\partial B \cup A \times[0, \infty)$, relative to $\partial B$, for some codimension $0$ submanifold $A$ of $\partial B$.
\end{defn}

In this case we find an isomorphism $H^{\cpt}_{k}(M) \cong H_{k}(B, A)$, since every compact subset $K \subset M$ is contained in a submanifold $B$ as above, so
\[H^{\cpt}_k(M) \cong \displaystyle{\lim_{B \subseteq M}}H_{k}(M, M \setminus B) \cong \displaystyle{\lim_{B \subseteq M}}H_{k}(B, A)\]
by finality and the latter system is evidently constant.

\subsection{Spaces of manifolds equipped with homological data} \label{subsection: spaces of manifolds with homology data}
We will need to consider spaces of manifolds equipped with a choice of subspace of its homology group. These spaces (defined below) will enable us to topologize the cobordism category $\Cob^{\mathcal{L}}_{\theta}$ (discussed in the introduction) and the semi-simplicial spaces introduced in Section \ref{section: A substitute for the cobordism category}. 

\begin{defn}
Fix a tangential structure $\theta: B \longrightarrow BO(d)$. For an open subset $U \subset \R^{m}$, 
let $\bPsi^{\Delta,k}_{\theta,l}(U)$ denote the set of triples $(M, \ell, V)$ with $(M, \ell) \in \bPsi_{\theta,l}(U)$ and $V \leq H^{\cpt}_{k}(M)$ a subgroup. 
\end{defn}

We topologize the set $\bPsi^{\Delta}_{\theta}$ in complete analogy with \cite[Section 2.1]{GRW 09}, where a topology on the spaces $\bPsi_{\theta}$ is described in three steps. Let us briefly indicate the necessary changes:\\

\begin{Construction} \label{construction: construction of topology}
\textit{Step 1:} Define the \textit{compactly supported topology} on $\bPsi^{\Delta}_{\theta}(U)$ for $U \subseteq \mathbb R^n$ in the same fashion as \cite[Step 1, page 1248]{GRW 09}, using instead of the map $c_M$ the partially defined map
\[\Gamma_c(\nu M) \longrightarrow \bPsi^{\Delta}_{\theta}(U), \quad s \longmapsto \big((id_M + p \circ s)(M), \ell \circ D(id_M + p \circ s)^{-1}, (id_M + p \circ s)_*(V)\big)\]
for some $(M,\ell,V) \in \bPsi^{\Delta}_{\theta}(U)$, where $p \colon \nu M \rightarrow \mathbb R^n$ is the projection in the fibre direction. This makes $\bPsi^{\Delta}_{\theta}(U)$ into a covering of $\bPsi_{\theta}(U)$ with fiber over $(M,\ell)$ the set of subgroups of $H_n(M)$ although we shall not need this.

\textit{Step 2:} 
To construct the \textit{$K$-topology}, for some compact $K \subset U$, proceed as in \cite[Step 2, page 1249]{GRW 09}, identifying two elements of $\bPsi^{\Delta}_{\theta}(U)$ if 
$$(M\cap A, \; \ell|_{M\cap A}, \; V|_{M\cap A}) \; = \; (M'\cap A, \; \ell'|_{M'\cap A}, \; V'|_{M'\cap A}),$$
for some neighbourhood $A$ of $K$.

\textit{Step 3:}
Finally, give $\bPsi^{\Delta}_{\theta}(U)$ the terminal topology making all identities into the various $K$-topologies continuous, no changes required.
\end{Construction}

With $\bPsi^{\Delta}_{\theta}(U)$ topologized as above one may proceed as in \cite[Section 2.2]{GRW 09} to obtain direct analogues to the basic properties of $\bPsi_{\theta}(U)$ proven in that section. For example we shall need the analogue of \cite[Theorem 2.7]{GRW 09}, its proof applies verbatim:

\begin{proposition}\label{conti}
Let $U' \subseteq U$ be an open subset.
The restriction map 
$$\bPsi^{\Delta}_{\theta}(U) \longrightarrow \bPsi^{\Delta}_{\theta}(U'), \quad (M, \ell, V) \; \mapsto \; (M\cap U', \; \ell|_{M\cap U'}, \; V|_{M\cap U'})$$
is continuous. 
\end{proposition}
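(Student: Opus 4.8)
The plan is to mirror the proof of \cite[Theorem 2.7]{GRW 09}, checking at each of the three stages of Construction \ref{construction: construction of topology} that the restriction map is compatible with the topology, the only new ingredient being that the subgroup data $V \leq H^{\cpt}_{*}(M)$ transforms correctly under restriction to an open subset. The starting observation is that the assignment $(M,\ell,V)\mapsto(M\cap U',\ell|_{M\cap U'},V|_{M\cap U'})$ is well defined: $M\cap U'$ is again closed as a subspace of $U'$, and the restriction homomorphism $\pi_{U'}\colon H^{\cpt}_{*}(M)\to H^{\cpt}_{*}(M\cap U')$ from \eqref{equation: restriction onto open} carries the subgroup $V$ to a subgroup $V|_{M\cap U'}$. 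Transitivity of these restriction maps (i.e.\ $\pi_{V''}\circ\pi_{V'}=\pi_{V''}$ for $U''\subseteq U'\subseteq U$) is immediate from their description as projections of inverse limits onto subsystems, and this is what lets the argument proceed stage by stage.

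\textbf{Step 1 (compactly supported topology).} I would first check continuity of $\bPsi^{\Delta}_{\theta}(U)^{\text{cs}}\to\bPsi^{\Delta}_{\theta}(U')^{\text{cs}}$. Working in a chart $c_{(M,\ell,V)}$ around a point, a section $s\in\Gamma_{c}(\nu M,A)$ with small support is sent to $(s(M),\ell\circ Ds,s_{*}V)$; intersecting with $U'$ simply restricts the section to $\nu(M\cap U')$ (after possibly shrinking the tubular neighborhood), so on charts the map is induced by the continuous restriction $\Gamma_{c}(\nu M)\to\Gamma_{c}(\nu(M\cap U'))$ of compactly supported sections. The point requiring a word is that $s_{*}(V)|_{s(M)\cap U'}$ agrees with $(s_{*}V)$ transported along the identification of $s(M)\cap U'$ with $M\cap U'$ given by $s$ — this is naturality of the restriction map \eqref{equation: restriction onto open} under the diffeomorphism $s$, which holds because \eqref{equation: restriction onto open} is defined purely in terms of the inverse system of compact subsets and is therefore functorial for open embeddings.

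\textbf{Steps 2 and 3 ($K$-topologies and the inverse limit).} For a compact $K'\subseteq U'$, pick a compact $K\subseteq U$ with $K'\subseteq\Int(K)$; then if two triples agree near $K$ their restrictions to $U'$ agree near $K'$, so the restriction map descends to a map $\bPsi^{\Delta}_{\theta}(U\mid K)\to\bPsi^{\Delta}_{\theta}(U'\mid K')$ fitting into a commuting square with the quotient maps $\pi_{K}$, $\pi_{K'}$ of \eqref{equation: K quotient projection}. Composing with the chart-level continuity from Step 1 shows the $K'$-topology on the source is finer than the pullback of the $K'$-topology on the target, i.e.\ $\bPsi^{\Delta}_{\theta}(U)^{K}\to\bPsi^{\Delta}_{\theta}(U')^{K'}$ is continuous; since $\bPsi^{\Delta}_{\theta}(U)$ carries the coarsest topology finer than all the $K$-topologies and $\bPsi^{\Delta}_{\theta}(U')$ is the inverse limit over all $K'$, the universal property of the inverse limit yields continuity of the restriction map on the full spaces.

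The only genuine obstacle, as in \cite{GRW 09}, is the bookkeeping in Step 1: one must be careful that shrinking the tubular neighborhood $A$ of $M$ to one of $M\cap U'$ can be done compatibly with the chart maps $c$, and that the various identifications of normal bundles and of homology restriction maps are natural enough for the chart-level description to glue. None of this is conceptually deep — it is precisely the content of \cite[Theorem 2.7]{GRW 09} with the extra datum $V$ carried along passively — which is why, following the excerpt's convention, the detailed verification can be omitted and the proposition simply cited to that source.
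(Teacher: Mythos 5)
Your overall plan --- flesh out the three-stage topology of Construction~\ref{construction: construction of topology} while carrying the datum $V$ passively, then cite \cite[Theorem 2.7]{GRW 09} --- is exactly what the paper intends, and your observation that the only genuinely new check is naturality of the homological restriction maps \eqref{equation: restriction onto open} under the perturbing diffeomorphism $s$ is correct. However, your Step~1 asserts something false: the restriction map $\bPsi^{\Delta}_{\theta}(U)^{\text{cs}} \longrightarrow \bPsi^{\Delta}_{\theta}(U')^{\text{cs}}$ is \emph{not} continuous, and indeed this failure is the entire reason the multi-stage topology exists. For a concrete counterexample, take $U'\subsetneq U$ open and choose $M$ with $M\cap U'=\emptyset$ but $M\cap \partial U'\neq\emptyset$. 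The chart around $(\emptyset,\ldots)$ in $\bPsi^{\Delta}_{\theta}(U')^{\text{cs}}$ is the single point $\{\emptyset\}$ (as $\Gamma_c(\nu\emptyset)=\{*\}$), hence $\{\emptyset\}$ is open there; yet every cs-neighborhood of $(M,\ell,V)$ contains small perturbations $s(M)$ that bulge across $\partial U'$ and so have $s(M)\cap U'\neq\emptyset$. Relatedly, the map $\Gamma_c(\nu M)\to\Gamma_c(\nu(M\cap U'))$ you invoke is not even well defined: the support of $s|_{M\cap U'}$ is the closure in $M\cap U'$ of $\{s\neq 0\}\cap U'$, which need not be closed in $M$ and hence need not be compact.

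The repair is to skip the cs-level claim entirely. What one actually proves is that for each compact $K'\subseteq U'$ the composite
$$
\bPsi^{\Delta}_{\theta}(U)^{\text{cs}} \longrightarrow \bPsi^{\Delta}_{\theta}(U') \longrightarrow \bPsi^{\Delta}_{\theta}(U'\mid K')
$$
is continuous. This is weaker because the target is a quotient remembering only the germ near $K'$: for $s$ small in a chart $\Gamma_c(\nu M,A)$, the manifold $s(M)\cap U'$ near $K'$ is a small normal perturbation of $M\cap U'$ near $K'$ (the boundary phenomena above happen away from the fixed compact set $K'$), and your naturality remark for the $V$-datum is exactly what is needed to see that the $K'$-germ of $\left(s(M)\cap U',\,\ell\circ Ds|,\,(s_{*}V)|\right)$ varies continuously. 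From there your Steps 2--3 are fine as written: this factors through a continuous map $\bPsi^{\Delta}_{\theta}(U)^{K}\to\bPsi^{\Delta}_{\theta}(U')^{K'}$ for a suitable compact $K\supseteq K'$ in $U$, and the universal property of the inverse limit closes the argument. So the architecture of your proof is right, but Step~1 should be demoted to a description of the map in charts and the continuity assertion should live at the $K'$-quotient level.
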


\begin{defn} \label{defn: cylindrical ends}
For each $k \leq m$ we define $\bpsi^{\Delta}_{\theta}(m, k) \subset \bPsi^{\Delta}_{\theta}(\R^{m})$ to be the subspace consisting of those $(M, \ell, V)$ such that $(M, \ell) \in \bpsi_{\theta}(m, k)$.
We define $\bPsi^{\Delta}_{\theta}(\mathbb R^\infty)$ and $\bpsi_{\theta}(\infty, k)$ to be the colimits of the above spaces, taken as $m \to \infty$. 
\end{defn}

Finally, let us give a criterion for lifting the continuity of maps into $\bPsi^{\Delta}_{\theta}(U)$ from that of their projection to $\bPsi_{\theta}(U)$. For the maps that we will be concerned with later (in Sections \ref{section: surgery on objects below the middle dimension} and \ref{section: surgery on objects in the middle dimension}) it is satisfied by the discussion in \cite[Sections 4 and 5]{GRW 14}.

\begin{defn} \label{defn: locally generated by vector fields}
A map $\phi \colon X \rightarrow \bPsi_{\theta}(U)$, written $(W_{t}, \ell_{t}) \in \bPsi_{\theta}(U)$, $t \in X$, is said to be \textit{locally generated by vector fields} if for every $t_{0} \in X$ and compact subset $A \subset U$, there exists a neighbourhood $V \subset X$ of $t_0$ and a map $s \colon V \rightarrow \Gamma_c(\nu W_{t_0})$ such that 
$$W_{t}\cap A  \; = \; (id_M + p \circ s_{t})(W_{t_{0}})\cap A \quad \text{and} \quad
 \ell_{t}|_{A\cap W_{t}} \; = \; (\ell_{t_{0}}\circ D(id_M + p \circ s_{t})^{-1})|_{A\cap W_{t}}$$
for all $t \in V$.
\end{defn}

\begin{Construction} \label{construction: one parameter family of subspaces}
Let $\phi \colon X \rightarrow \bPsi_{\theta}(U)$, written $t \longrightarrow (W_{t}, \ell_{t})$, be a continuous map and $Q \subseteq U$ be an open subset such that the family $(W_{t}, \ell_{t})$ is constant when restricted to $U\setminus Q$. Let $W'$ denote the (constant) complement $W_t\setminus(W_{t}\cap Q)$ and 
$$\beta_{t}: H^{\cpt}_{*}(W') \longrightarrow H^{\cpt}_{*}(W_{t})$$ 
be the homomorphism induced by inclusion $W' \hookrightarrow W_{t}$. For a subspace $V \leq H^{\cpt}_{*}(W')$ let 
\[V_{t} := \beta_{t}(V)  \leq  H^{\cpt}_{*}(W_{t}).\]
This gives a lift 
\[\phi_Q \colon X \rightarrow \bPsi^\Delta_{\theta}(U), t \longmapsto (W_t,\ell_t,V_t).\]
\end{Construction}

\begin{proposition} \label{proposition: continuity of the family}
If $\phi \colon X \rightarrow \bPsi_{\theta}(U)$ is locally generated by vector fields, then $\phi_Q \colon X \rightarrow \bPsi^\Delta_{\theta}(U)$ is continuous.
\end{proposition}

\begin{proof} 
By definition of the topology it suffices to prove that the families
\[(W_{t}, \ell_{t}, V_{t})_{K} \in \bPsi^{\Delta}_{\theta}(U),\]
are continuous for the various $K$-topologies. 
Since $\phi$ is locally generated by vector fields, there exists a neighbourhood $V$ of $t$, such that $(W_t \cap A,\ell_t|_{A})$ is given by a family of vector fields $s \colon V \rightarrow \Gamma_c(\nu W_{t_0})$ on $V$. But then each $s_t$ has to vanish on $W' \cap A$, whence
\[(s_t)_*(V_{t_0})|_{W_t \cap A} = V_{t}|_{W_t \cap A}.\]
This means that the manifestly continuous family $(s_t(W_{t_0}) \cap A, \ell|_{s_t(W_{t_0}) \cap A}, (s_t)_*(V_{t_0})|_{s_t(W_{t_0}) \cap A})$ identifies to the same map as $\phi|_V$ in the quotient defining the $K$-topology.
\end{proof}


\section{Cobordism Categories of Highly Connected Odd Dimensional Manifolds}  \label{section: cobordism categories}
In this section we collect the relevant parts of \cite{GRW 14} that still hold true in the odd dimensional setting and briefly explain the failure of the key statement. In particular, we give the relationship between the stabilized diffeomorphism group, the monoid $\mathcal M_{2n+1}$ from the introduction and the cobordism category, which are entirely analogous to the even dimensional, non-highly connected situation.

\subsection{Some subcategories of $\Cob_{\theta}$} \label{subsection: subcategories}
We start out by repeating \cite[Definition 3.7]{GRW 09}:

\begin{defn} \label{defn: the cobordism category}
We let the non-unital topological category $\Cob_{\theta}$ have object space $\bpsi_{\theta,d-1}(\infty, 0)$. The morphism space is the following subspace of $\mathbb R \times \bpsi_{\theta}(1 + \infty,1)$: A pair $(t,(W, \ell))$ is a morphism if there exists an $\varepsilon > 0$ with
$$W|_{(-\infty, \varepsilon)} \;  = \; (-\infty, \varepsilon)\times W|_{0} \quad \text{and} \quad W|_{(t - \varepsilon, \infty)} \;  = \; (t-\varepsilon, \infty)\times W|_{t}$$ 
as $\theta$-manifolds, where $(-\infty, \varepsilon)\times W|_{0}$ and $(t-\varepsilon, \infty)\times W|_{t}$ are equipped with the product $\theta$-structures induced from $\ell|_{0}$ and $\ell|_{t}$ on $W|_{0}$ and $W|_{t}$ using. The source of such a morphism is $W|_0$ and the target $W|_t$, equipped with their respective restrictions of the $\theta$-structure $\ell$ on $W$.
\end{defn}

We need to establish some notation. Let $n \geq 0$ be an integer and fix a map $\theta \colon B \rightarrow BO(2n+1)\langle n \rangle$ giving rise to a tangential structure, which we shall also call $\theta$ (we follow Galatius and Randal-Williams in using $\langle n \rangle$ to indicate the $n$-connected cover of space). Also, fix once and for all a $2n$-dimensional disk 
\begin{equation} \label{equation: fixed disk}
D \subset (-\tfrac{1}{2}, 0]\times(-1, 1)^{\infty-1},
\end{equation}
which near $\{0\}\times\R^{\infty-1}$ agrees with $(-1, 0]\times\partial D$ and a $\theta$-structure $\ell_{D}: TD\oplus\epsilon^{1} \longrightarrow \theta^{*}\gamma^{2n+1}$ on it. Let $\ell_{\R\times D}$ denote the $\theta$-structure on $\R\times D$ induced by $\ell_{D}$. Recall, finally, the notion of \textit{weak once-stability} of a tangential structure $\theta: B \longrightarrow BO(2n+1)$ from \cite[Definition 5.4]{GRW 14}, which implies that the $\theta$-structure on a cobordism $(M, \ell_{M})$ to $(N, \ell_{N})$ can be changed to give a cobordism from $(N, \ell_{N})$ to $(M, \ell_{M})$, something that is not true in general (\cite[Section 5.2]{GRW 14}). 

\begin{defn} \label{defn: disk category no L}\label{defn: monoid on sphere}
Define a sequence of subcategories of $\Cob_{\theta}$ as follows:
\begin{enumerate} 
\item[(i)]
$\Cob^{\mb{m}}_{\theta} \subset \Cob_{\theta}$ has the same space of objects, and the morphisms from $(M, \ell_{M})$ to $(N, \ell_{N})$ are given by those $(t, W, \ell)$ for which the pair $(W|_{[0, t]}, W|_{t})$ is $(n-1)$-connected. 
\item[(ii)] $\Cob^{D}_{\theta} \subset \Cob^{\mb{m}}_{\theta}$ has as its objects those $(M, \ell)$ such that 
$$M\cap\left[(-1, 0]\times(-1, 1)^{\infty-1}\right] \; = \; D,$$ 
and that the restriction of $\ell$ to $D$ agrees with $\ell_{D}$.
Similarly, it has as its morphisms those $(W, \ell)$ such that 
$W\cap\left[\R\times(-1, 0]\times(-1, 1)^{\infty-1}\right] \; = \; \R\times D,$
and the restriction of $\ell$ to $\R\times D$ agrees with $\ell_{\R\times D}$.
\item[(iii)] Let $l \in \Z_{\geq -1}$.
The topological subcategory $\Cob^{l}_{\theta} \subset \Cob^{D}_{\theta}$ is the full subcategory on those objects $(M, \ell)$ such that $M$ is $l$-connected.
\item[(iv)] Assume $\theta$ weakly once-stable. Then $\Cob^{\emptyset}_{\theta} \subset \Cob_{\theta}$ is the full subcategory on those $\theta$-manifolds $(M, \ell)$ that are $\theta$-nullcobordant, i.e. admit a morphism to (or from) the empty set.
\item[(v)] Define $\Cob^{l, \emptyset}_{\theta}$ to be the intersection $\Cob^{\emptyset}_{\theta}\cap\Cob^{l}_{\theta}$.
\end{enumerate}
\end{defn}

Just as in the even dimensional case $\Cob^{n}_{\theta}$ relates via a group completion argument to the diffeomorphisms of $n-1$-connected, $n$-parallelizable manifolds. We establish this in the next section. In the last we will discuss the relation between $\Cob^n_\theta$ and  $\Cob_\theta$ and the homotopy type of the latter.

\subsection{Reduction to a monoid and group completion} \label{section: reduction to a monoid}\label{subsection: group completion}
We proceed to establish the relation between $\Cob^{n}_{\theta}$ and stabilised diffeomorphism groups. Assume that the tangential structure $\theta: B \longrightarrow BO(2n+1)$ is weakly once-stable and $n \geq 3$. It follows directly from the discussion of weak once-stability (and reversibility) that the subspace $B\Cob^{\emptyset}_{\theta} \subset B\Cob_{\theta}$ is a single path component of $B\Cob_{\theta}$. For $B\Cob^{l, \emptyset}_{\theta}$ the analoguous property relies on a connectivity assumption on $\theta$. For $l \leq n-1$ it then follows from Theorem \ref{theorem: connected object equivalence} and for $l=n$ we need the following strengthening:

\begin{proposition} \label{proposition: path connectivity of object space}
If $B$ is $n$-connected, the object space $\Ob\Cob^{n, \emptyset}_{\theta}$ is a path component of $\Ob\Cob^{n}_\theta$ and consequently $B\Cob^{n, \emptyset}_{\theta}$ is a path component of $B\Cob^{n}_{\theta}$.
\end{proposition}

\begin{proof}
Note first, that $\Ob\Cob^{n, \emptyset}_{\theta}$ is clearly a union of path components of $\Ob\Cob^{n}_\theta$, so it will suffice to show that $\Ob\Cob^{n, \emptyset}_{\theta}$ is path connected. To this end observe that, since $BO(2n+1)\langle n \rangle$ is $n$-connected, it follows that for any object $(M, \ell) \in \Ob\Cob^{n, \emptyset}_{\theta}$, the manifold $M$ is diffeomorphic to the standard sphere $S^{2n}$: Indeed, $M$ is $n$-connected and thus is a homotopy sphere. Let $(W, \ell_{W})$ be a $\theta$-null-bordism of $(M, \ell)$. By the connectivity assumption the $\theta$-structure provides $W$ with a parallelization over an $n$-skeleton and then by \cite{W 62a}, we may perform a sequence of surgeries on the interior of $W$ so that the resulting manifold $\widetilde{W}$ is a contractible manifold. Since $n \geq 3$, it follows from the h-cobodism theorem that $\widetilde{W}$ is diffeomorphic to $D^{2n+1}$, and thus $M \cong S^{2n}$. Thus there is a weak homotopy equivalence 
$$\Ob\Cob^{n, \emptyset}_{\theta} \simeq  \textstyle{\Bun^{\emptyset}}(TS^{2n}\oplus\epsilon^{1}, \theta^{*}\gamma^{2n+1}; \ell_{D}) \sslash \Diff(S^{2n}, D^{2n}),$$ 
where $\Bun^{\emptyset}(TS^{2n}\oplus\epsilon^{1}, \theta^{*}\gamma^{2n+1}; \ell_{D})$ is the space of $\theta$-structures $\ell$ on $S^{2n}$ that agree with the structure $\ell_{D}$ when restricted to a fixed disk  $D^{2n} \subset S^{2n}$, such that $(S^{2n}, \ell)$ is $\theta$-cobordant to the empty set. We will show that $\textstyle{\Bun^{\emptyset}}(TS^{2n}\oplus\epsilon^{1}, \theta^{*}\gamma^{2n+1}; \ell_{D})$ is path connected. 

Since the space of $\theta$-structures on $D^{2n+1}$ (fixed on a boundary hemisphere) is contractible, it will suffice to show that every element of $\Bun^{\emptyset}(TS^{2n}\oplus\epsilon^{1}, \theta^{*}\gamma^{2n+1}; \ell_{D})$ is the restriction to the boundary of some $\theta$-structure on $D^{2n+1}$. To see this we need only observe that the surgeries making $W$ into a disk can be chosen compatible with $\theta$. This is ensured by the dicussion in \cite[Section 4.1]{GRW 14}.

The addendum stating that $B\Cob^{n, \emptyset}_{\theta}$ is a path component of $B\Cob^{n}_{\theta}$ now follows from the fact that $\Cob^{n, \emptyset}_{\theta} \subseteq \Cob^{n}_{\theta}$ is a full subcategory.
\end{proof}

To proceed, fix once and for all an object $(S, \ell_{S}) \in \Ob\Cob^{n, \emptyset}_{\theta}$. We define 
$$\mathcal{M}_{\theta} \subset \Cob^{n, \emptyset}_{\theta}$$
to be the endomorphism monoid on the object $(S, \ell_{S})$. Since the object space $\Ob\Cob^{n, \emptyset}_{\theta}$ is path-connected and the combined source-target map is well known to be a fibration (for example it follows from the work of Lima \cite{Lim}, the extension to $\theta$-structures is explicitely handled in \cite[Lemma 4.1]{RaSt}) the homotopy type of $\mathcal{M}_{\theta}$ as a topological monoid is independent of the choice of object $(S, \ell_{S})$.

\begin{proposition} \label{corollary: monoid reduction}
For an $n$-connected tangential structure $\theta$, the inclusion $B\mathcal{M}_{\theta} \hookrightarrow B\Cob^{n, \emptyset}_{\theta}$ is a weak homotopy equivalence. 
\end{proposition}

The proof is entirely analogous to \cite[Section 7.1]{GRW 14} using the path-connectivity of $\Ob\Cob^{n, \emptyset}_{\theta}$.
By the contractibilty of embedding spaces into infinite euclidean space we find
\[\mathcal{M}_{\theta} \simeq \coprod_{W}\textstyle{\BDiff_{\theta}}(W, D^{2n+1}),
\]
with union ranging over diffeomorphism classes of $(n-1)$-connected, $(2n+1)$-dimensional, closed $\theta$-manifolds $W$, equipped with an embedding $D^{2n+1} \hookrightarrow W$ compatible with the $\theta$-structure. Finally, we have:

\begin{proposition} \label{lemma: homotopy commutativity and homotopy type}
The monoid $\mathcal{M}_{\theta}$ is homotopy commutative. 
\end{proposition}

The proof is just as in \cite[Proposition 4.27]{GRW 09} and so we also omit it. For a particular choice of tangential structure we obtain the monoid $\mathcal{M}_{2n+1}$ defined in the introduction: Let $\theta^{n}\colon BO(2n+1)\langle n \rangle \longrightarrow BO(2n+1)$ denote the projection. Since for any $(n-1)$-connected, $(2n+1)$-dimensional closed manifold $W$ that admits a $\theta^{n}$-structure, the space of $\theta^{n}$-structures on $W$ is weakly contractible (relative to the one chosen on the embedded disk, see \cite[Lemma 7.16]{GRW 14}) it follows that there is a weak homotopy equivalence \[\mathcal{M}_{\theta^{n}} \simeq \mathcal{M}_{2n+1}.\] We will use these two monoids interchangeably.

\begin{defn}
A manifold admitting a $\theta^n$-structure is called $n$-parallelizable. Let $\mathcal{W}_{2n+1}$ denote the set of diffeomorphism classes of oriented, $(n-1)$-connected, $(2n+1)$-dimensional, closed, manifolds, that are $n$-parallelizable.
\end{defn}

Clearly $\mathcal W_{2n+1} \cong \pi_{0}\mathcal{M}_{2n+1}$, in particular $\mathcal W_{2n+1}$ is a monoid under connected sum. Recall that a monoid $M$ is said to be \textit{finitely saturated} if its group completion can be constructed by inverting just finitely many elements of $M$ (or equivalently a single one).

\begin{proposition} \label{proposition: countable generation of monoid}
The monoid $\mathcal{W}_{2n+1}$ is countable, but not finitely generated or even finitely saturated.
\end{proposition}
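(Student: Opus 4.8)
The plan is to read off everything from Wall's classification of $(n-1)$-connected, $(2n+1)$-dimensional manifolds in \cite{W 67}, via the complete set of algebraic invariants it provides. First I would recall that under connected sum, $\mathcal{W}_{2n+1}$ decomposes (at least after passing to the relevant invariants) so that the middle homology group $H_n(W)$ together with its torsion linking form, the map $\pi_n(W) \to \pi_{n-1}(SO)$ classifying the tangent bundle over the $n$-skeleton, and a finite number of further invariants determine the diffeomorphism type. Since $H_n(W)$ may be any finitely generated abelian group, one already sees infinitely many non-isomorphic objects, each built as a connected sum of handlebodies $S^n \times S^{n+1}$ (for the free part) and of manifolds realizing the various finite cyclic linking forms (for the torsion part); hence countability follows because there are only countably many finitely generated abelian groups equipped with such finite extra data.

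Second, to see that $\mathcal{W}_{2n+1}$ is not finitely generated, I would exhibit an infinite family of indecomposable elements no finite subset of which can generate all the others under connected sum. The natural candidates are manifolds whose middle homology is a single finite cyclic group $\mathbb{Z}/p$ for $p$ ranging over the primes (or $\mathbb{Z}/p^k$), carrying the standard linking form: since $H_n(-)$ is additive under $\#$ and such a $\mathbb{Z}/p$ cannot be a summand of $H_n$ of any connected sum of manifolds none of whose homology involves $p$-torsion, no finite list of generators suffices. Here one must be slightly careful to verify these manifolds actually lie in $\mathcal{W}_{2n+1}$, i.e. that they are $n$-parallelizable and $(n-1)$-connected — this is where Wall's realization results, combined with the hypothesis $n \geq 4$ ensuring we are in the range where surgery below the middle dimension works, are needed.

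Third, and this is the point requiring the most care, I must rule out finite saturation: even though $\mathcal{W}_{2n+1}$ is not finitely generated, one could imagine that its group completion $\mathcal{W}_{2n+1}[\mathcal{W}_{2n+1}^{-1}]$ is obtained by inverting only finitely many elements. To defeat this, I would again use additivity of $H_n$ together with the linking form: the group completion (projected to the relevant invariant) contains a copy of, roughly, the Grothendieck group of finite abelian groups with linking forms, which surjects onto something like $\bigoplus_p \mathbb{Z}$ or a similar infinite direct sum. Inverting a finite set $S \subseteq \mathcal{W}_{2n+1}$ only makes invertible those finitely many primes $p$ appearing in the torsion of the homology of manifolds in $S$; a prime $q$ not occurring in any of them still gives a class in the localization $\mathcal{W}_{2n+1}[S^{-1}]$ that is not invertible, since any relation forcing invertibility would, after applying $H_n(-)$, express $\mathbb{Z}/q$-torsion in terms of the $S$-torsion, which is impossible. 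Hence no finite localization can already be a group, which is exactly the failure of finite saturation.

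\textbf{Main obstacle.} The subtle step is the last one: finite saturation is a genuinely weaker condition than finite generation, so one needs an honest invariant of the monoid — additivity of middle homology together with the prime decomposition of its torsion, extracted from Wall's invariants — that is robust enough to survive localization and detect that inverting finitely many elements cannot make everything invertible. Formulating the right target monoid homomorphism $\mathcal{W}_{2n+1} \to (\text{something like } \bigoplus_{p}\mathbb{N} \text{ or a Witt-type group})$ and checking it is well-defined and surjective enough to conclude is where the real work lies; the countability and non-finite-generation parts are comparatively routine consequences of Wall's classification.
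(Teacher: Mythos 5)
Your proposal is correct and follows essentially the same route as the paper: countability from Wall's classification, and failure of finite saturation by pushing the torsion linking form forward to the monoid of finite abelian groups under direct sum, where inverting finitely many elements can only account for torsion at finitely many primes --- the paper packages your final step as the observation that finite saturation is inherited along surjective monoid homomorphisms. The one technical point to make explicit is that Wall classifies \emph{almost}-diffeomorphism classes; the finiteness of the group of homotopy spheres in each dimension is what closes the gap to honest diffeomorphism classes.
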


\begin{proof}
Recall that two oriented manifolds $M$ and $M'$ are said to be \textit{almost diffeomorphic} if $M$ is diffeomorphic to $M'\#\Sigma$ where $\Sigma$ is an oriented homotopy sphere. Since the set of homotopy spheres in a given dimension is finite, it follows that there are only finitely many diffeomorphism types in a given almost-diffeomorphism class. In \cite{W 67}, Wall shows that the almost diffeomorphism class of any $(n-1)$-connected, $(2n+1)$-dimensional manifold $M$, is determined by a finite collection of algebraic invariants, each of which it turns out can take only countably many values. In the case that $n$ is even and $W \in \mathcal{W}_{2n+1}$, these (almost) diffeomorphism invariants are given by the linking form $b: \tau H_{n}(W)\otimes\tau H_{n}(W) \longrightarrow \Q/\Z$, and cohomology classes $\widehat{\phi} \in H^{n+1}(W; \Z/2)$ and $\widehat{\beta} \in H^{n+1}(W; \pi_{n}(SO))$ (see \cite[Theorem 7]{W 67}). The linking form is a nonsingular, $(-1)^{n+1}$-symmetric, bilinear pairing, and according to the classification in \cite{W 64}, there are countably many such objects up to isomorphism. It follows that the set of almost-diffeomorphism classes (and hence the diffeomorphism classes) of elements of $\mathcal{W}_{2n+1}$ is countably infinite. 

That $\mathcal{W}_{2n+1}$ cannot be finitely saturated, follows from the analogous fact for the monoid (under direct sum) of isomorphism classes of finite abelian groups, to which $\mathcal{W}_{2n+1}$ surjects (via taking $n$-th homology), since by \cite{W 67} all non-degenerate pairings on finite abelian groups are realised by linking forms (and e.g. by \cite{W 64} there is at least one such on every finite group); the latter monoid clearly is not finitely saturated (as finitely many elements can only ever account for torsion at finitely many primes). 
Since finite saturation is inherited by quotients the claim follows.
\end{proof}

Since $\mathcal{M}_{2n+1}$ is homotopy commutative we may apply the group completion theorem of McDuff and Segal from \cite{MS 75} (see \cite{Ni 18} for a concise modern treatment).  The main result of \cite{MS 75} implies that the natural map $\mathcal{M}_{2n+1} \longrightarrow \Omega B\mathcal{M}_{2n+1}$, induces an isomorphism 
\[H_{*}(\mathcal{M}_{2n+1})\left[\pi_{0}(\mathcal{M}_{2n+1})^{-1}\right] \; \stackrel{\cong}  \longrightarrow \; H_{*}(\Omega B\mathcal{M}_{2n+1}).\]
Using the language of \cite{RW 13}, this isomorphism may also be expressed as a certain map 
$$(\mathcal{M}_{2n+1})_\infty \longrightarrow \Omega B \mathcal{M}_{2n+1}$$
being a homology equivalence (even acyclic), where $(\mathcal{M}_{2n+1})_\infty$ is the colimit of the direct system
\[\xymatrix{\mathcal{M}_{2n+1} \ar[r]^-{\cdot W_{1}} & \mathcal{M}_{2n+1} \ar[r]^-{\cdot W_{1}\cdot W_{2}} & \mathcal{M}_{2n+1} \ar[rr]^-{\cdot W_{1}\cdot W_{2}\cdot W_{3}} && \mathcal{M}_{2n+1} \ar[rr]^-{\cdot W_{1}\cdot W_{2}\cdot W_{3}\cdot W_{4}} && \cdots}\]
where the $W_{i}$ give a generating system of $\pi_0(M)$ under saturation. Restricting to the path component of $\mathcal{M}_{2n+1}$ corresponding to the sphere $S^{2n+1}$ produces the direct system
$$\xymatrix{\BDiff(W_{1}, D^{2n+1}) \ar[r] & \BDiff(W_{1}^{\#2}\# W_{2}, D^{2n+1}) \ar[r] & \BDiff(W_{1}^{\#3}\# W^{\# 2}_{2}\#W_{3}, D^{2n+1}) \ar[r] & \cdots}$$
We denote the colimit of this direct system by $\mb{B}_{\infty}$, dropping the superscripted dimension from the introduction. Let $\Omega_{0}B\mathcal{M}_{2n+1} \subseteq \Omega B\mathcal{M}_{2n+1}$ denote the path-component that contains the constant loop. We obtain the desired conclusion:

\begin{proposition}\label{grp-cplt}
The above construction produces a homology equivalence, in fact an acyclic map,
$$\mb{B}_{\infty} \longrightarrow \Omega_{0}B\mathcal M_{2n+1}$$
and therefore the composite
$$\mb{B}_{\infty} \longrightarrow \Omega_0^\infty\Cob^{n}_{\theta^n}$$
is one as well.
\end{proposition}

\subsection{The homotopy type of the cobordism category}
Let us finally explain the failure of the method from \cite{GRW 14} in odd dimensions briefly. First of all we recall the main result of \cite{GMTW 08} in the language of \cite{GRW 09}: If we denote by $\MT\theta$ the Thom spectrum associated to the $-d$-dimensional virtual vector bundle $-\theta^{*}\gamma^{d}$ over $B$, Galatius and Randal-Williams construct (zig-zags of) weak equivalences 
\[B\Cob_{\theta} \simeq \bpsi_{\theta}(\infty, 1) \simeq \Omega^{\infty-1}\MT\theta,\]
that model the scanning map mentioned from the introduction. Since we will not explicitely make use of further properties of these equivalence let us refrain from spelling them out. Note that this result is completely insensitive to the parity of the manifold dimension. The same is true for the following two results.

\begin{theorem} \label{theorem: disk category to m}
The inclusions $B\Cob^{D}_{\theta} \hookrightarrow B\Cob^{\mb{m}}_{\theta} \hookrightarrow B\Cob_{\theta}$ are weak homotopy equivalences. 
\end{theorem}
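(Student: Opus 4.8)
The plan is to reduce both inclusions to statements about classifying spaces of categories by exhibiting, in each case, a functor (or zig-zag of functors) in the other direction inducing the homotopy inverse, or — more in the spirit of \cite{GRW 14} — to quote the relevant results there verbatim and explain the translation. For the inclusion $B\Cob^{\mb{m}}_{\theta} \hookrightarrow B\Cob_{\theta}$, the point is that any morphism $(t,W,\ell)$ in $\Cob_\theta$ can be factored, up to the relation generated by the simplicial structure of the nerve, as a composite of morphisms each of which is highly connected relative to its outgoing boundary; this is precisely the content of \cite[Theorem 3.1]{GRW 14}, obtained there by a parametrized surgery argument below the middle dimension that makes the pair $(W|_{[0,t]}, W|_t)$ into an $(n-1)$-connected one without changing the underlying object or the ambient homotopy type of the nerve. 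I would invoke that theorem directly and note that the only input needed is surgery below the middle dimension, which is entirely insensitive to the parity of $d = 2n+1$, so the even-dimensional proof applies unchanged.

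For the inclusion $B\Cob^{D}_{\theta} \hookrightarrow B\Cob^{\mb{m}}_{\theta}$, the strategy is to observe that fixing the standard disk $D$ (together with its $\theta$-structure $\ell_D$) inside every object and every morphism is a "collar" or "marked point" condition of the type handled by \cite[Proposition 2.15]{GRW 14}: one shows that the space of embeddings of the half-open collar $(-1,0]\times\partial D$ into a given $\theta$-manifold, compatibly with the $\theta$-structure, is contractible (using that $\theta$ has $n$-connected — in particular connected and simply connected — source, so that $\theta$-structures on a disk form a contractible space and extend uniquely up to contractible choice), and then feeds this into a standard argument that the forgetful map on nerves is a weak equivalence. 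Concretely, I would build a bi-semi-simplicial resolution of $N_\bullet\Cob^{\mb{m}}_\theta$ by adding in the extra data of such a collar embedding; the augmentation to $N_\bullet\Cob^{\mb{m}}_\theta$ is a levelwise weak equivalence because the added data is parametrized by a contractible space, while the other face direction retracts onto $N_\bullet\Cob^{D}_\theta$ by translating the chosen disk into the standard position \eqref{equation: fixed disk}. A realization-of-bi-semi-simplicial-spaces argument then yields the equivalence.

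The main obstacle I anticipate is purely bookkeeping rather than conceptual: one must be careful that the surgeries and isotopies used to normalize the disk take place away from the region $[0,t]\times\R^\infty$ where the cobordism already carries its connectivity data, so that the two normalization procedures (the one making pairs $(n-1)$-connected and the one fixing $D$) can be carried out simultaneously and do not interfere. Since the fixed disk $D$ sits in the half-space $(-\tfrac12,0]\times(-1,1)^{\infty-1}$ and the surgeries of \cite[Sections 4 and 5]{GRW 14} can be performed in the complementary open region $(0,1)\times(-1,1)^{\infty-1}$, this separation is available, but verifying it requires tracking supports through the constructions. Everything else — contractibility of the relevant spaces of $\theta$-structures on disks, the bi-semi-simplicial realization lemma, and the translation of \cite[Theorem 3.1 and Proposition 2.15]{GRW 14} to the present indexing — is routine, so I would present this theorem as essentially a citation with the dimension-insensitivity of the surgery-below-the-middle-dimension argument spelled out.
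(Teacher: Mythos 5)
Your citation of \cite[Theorem 3.1 and Proposition 2.15]{GRW 14} matches the paper's one-line proof, and your account of the morphism-connectivity inclusion $B\Cob^{\mb{m}}_{\theta} \hookrightarrow B\Cob_{\theta}$ via surgery below the middle dimension is accurate and correctly observes that this is insensitive to the parity of the total dimension. The problem is your proposed mechanism for the disk inclusion $B\Cob^{D}_{\theta} \hookrightarrow B\Cob^{\mb{m}}_{\theta}$, which is not how \cite[Proposition 2.15]{GRW 14} works and would not establish the theorem as stated.

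You propose to show that the space of $\theta$-compatible collar embeddings of $(-1,0]\times\partial D$ into a given object is contractible, invoking $n$-connectivity of $B$, and then to feed this into a bi-semi-simplicial resolution. But Theorem \ref{theorem: disk category to m} carries no connectivity hypothesis on $B$ whatsoever (in contrast to Theorem \ref{theorem: connected object equivalence} and everything in Section \ref{section: reduction to a monoid}, where $l$- or $n$-connectivity is explicitly imposed), so a proof that needs $B$ to be $n$-connected is strictly weaker than the claim. The actual argument — which this paper spells out in Section \ref{section: Proof of disk inclusion} for the Lagrangian analogue — is much simpler and avoids all embedding theory: one shrinks the ambient $(-1,1)$ factor into $(\tfrac12,1)$ via the diffeomorphism $\phi$ and then \emph{disjointly adjoins} the fixed cylinder $\R\times\widehat{D}$ (where $\widehat{D}$ is the sphere obtained by doubling $D$). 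This produces an explicit retraction $r_p$ that is a two-sided homotopy inverse to the inclusion at each simplicial level; no $\theta$-structures need to be compared, extended, or trivialized inside the given manifold, and no contractibility of a space of embeddings is invoked. Relatedly, your worry at the end about supports interfering between "the surgeries and isotopies used to normalize the disk" and the morphism-connectivity surgeries reflects the same misconception: the disk normalization does not perform any surgery on the cobordism at all, so the two normalizations genuinely commute for trivial reasons and no bookkeeping of supports is needed.
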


This is immediate from \cite[Proposition 2.15 \& Theorem 3.1]{GRW 14} and the next statement follows from \cite[Theorem 4.1]{GRW 14}.

\begin{theorem} \label{theorem: connected object equivalence}
Let $l \leq n-1$ and suppose that $\theta: B \longrightarrow BO(2n+1)$ has $B$ $l$-connected and $\pi_{l+1}(B)$ finitely generated. 
Then the inclusion $B\Cob^{l}_{\theta} \hookrightarrow B\Cob^{l-1}_{\theta}$ is a weak homotopy equivalence. 
\end{theorem}

The alert reader will notice that our assumption are weaker than those in the cited theorem of Galatius and Randal-Williams: Their result requires $B$ to be $(l+1)$-connected, since (in their notation) the map $L \rightarrow B$ is assumed an $(l+1)$-equivalence. The stronger assumption is used at only one place in the proof, namely to show that the relative homotopy group $\pi_{l+1}(\ell)$ is finitely generated (where $\ell \colon M \rightarrow B$ denotes a $\theta$-structure). Following Wall \cite[Section 1A]{W 70}, this is known to follow from the weaker assumption that $\ell$ be an $l$-equivalence and $B$ of type $F_{l+1}$, i.e. admitting an $l+1$-equivalence from \emph{some} finite complex (see the proof of \cite[Lemma 3.81]{CML} for an exposition where this becomes evident). This in turn is guaranteed by our assumptions.

Theorems \ref{theorem: disk category to m} and \ref{theorem: connected object equivalence} together imply the weak homotopy equivalences
$$B\Cob^{n-1}_{\theta} \simeq B\Cob_{\theta} \simeq \Omega^{\infty-1}\MT\theta$$
for $(n-1)$-connected $B$ with $\pi_n(B)$ finitely generated. In contrast to the even dimensional case (see \cite[Theorem 5.3]{GRW 14}), however, Theorem \ref{theorem: connected object equivalence} cannot even be extended to yield a weak homotopy equivalence between $B\Cob^{n,\emptyset}_{\theta}$ and $B\Cob^{n-1,\emptyset}_{\theta}$ for $n$-connected $B$! Indeed, the inclusion $\Omega B\Cob_{\theta^{n}}^{n} \hookrightarrow \Omega B\Cob_{\theta^{n}}$ cannot be a weak homotopy equivalence: Consider the homotopy commutative diagram 
\[\xymatrix{\Omega_{0}B\mathcal{M}_{2n+1} \ar[r] & \Omega^{\infty}_{0}\MT\theta^{n} \\
\mb{B}_{\infty} \ar[u]^{\simeq_{H_{*}}} & \coprod_{i \in \mathbb N} \BDiff(W_{1}^{\#i}\# \dots \#W_{i}, D^{2n+1}) \ar[u]_{\coprod\alpha_i} \ar[l],}\]
from the previous section, where the bottom-horizontal map is induced by the inclusions of the terms into their colimit, the top arrow is induced by the composite 
$$B\mathcal{M}_{2n+1} \stackrel{\simeq} \longrightarrow B\Cob^{n}_{\theta^{n}} \longrightarrow B\Cob_{\theta^{n}} \stackrel{\simeq} \longrightarrow \Omega^{\infty-1}\MT\theta^{n},$$
and the right-vertical map is the \textit{scanning map}, whose effect in rational cohomology is well-known to give the \emph{tautological} or \emph{Morita-Miller-Mumford  classes} of manifold bundles. Since homology preserves colimits, the bottom map is certainly surjective in homology, and thus $B\Cob^{n}_{\theta^{n}} \longrightarrow B\Cob_{\theta^{n}}$ being a weak equivalence onto a path component would imply surjectivity (in homology) of the right-vertical map. 
However, the main theorem of \cite{E 09} implies that this map has a non-trivial kernel in rational cohomology (consisting at least of the tautological classes of Hirzebruch's $\mathcal L$-polynomials, most of which do not vanish in the cohomology of $\Omega^{\infty-1}\MT\theta^{n}$), and thus cannot be surjective in homology.

We therefore see that the failure of Theorem \ref{theorem: connected object equivalence} in the case that $l = n$ is fundamental, and not merely a technical shortcoming of the methods of \cite{GRW 14}.


\section{Cobordism Categories of Manifolds Equipped with Lagrangians}\label{sec 4}
In this section we set out to describe our modification of the replacement for the cobordism category in detail (Definition \ref{defn: lagrangian cobordism category}) and state the main technical theorems we will prove in the paper (Theorems \ref{theorem: inclusion of disk} - \ref{theorem: infinite loopspace structure}). The results from the introduction are immediate consequences and we spell this out at the end of the section. First, however, we need to cover some preliminaries regarding bilinear and quadratic forms and their Lagrangian subspaces.

\subsection{Preliminaries on quadratic forms and Lagrangian subspaces} \label{subsection: quadratic forms}
Let $\varepsilon = \pm 1$. An $\varepsilon$-symmetric \textit{bilinear form} is a pair $(P, \lambda)$ where $P$ is a finitely generated $\Z$-module and $\lambda: P\otimes P \longrightarrow \Z$ is a bilinear map with the property that $\lambda(x, y) = \varepsilon\cdot\lambda(y, x)$ for all $x, y \in P$. An $\varepsilon$-symmetric bilinear form $(P, \lambda)$ is said to be \textit{non-singular} if the map 
$${P} \longrightarrow \textstyle{\Hom_{\Z}}({P}, \Z), \quad x  \longmapsto  \lambda(x, -)$$
becomes an isomorphism after modding out torsion. If ${V} \leq {P}$ is a submodule, we let ${V}^{\perp}$ denote the orthogonal complement of ${V}$ in ${P}$, i.e. 
$${V}^{\perp}  =  \{ x \in {P}  | \lambda(x, v) = 0 \quad \text{for all $v \in {V}$} \}.$$
 An $\varepsilon$ \textit{quadratic form} is a triple $({P}, \lambda, \mu)$, such that $(P, \lambda)$ is an $\varepsilon$-symmetric form and $\mu P \rightarrow \mathbb Z/(1-\varepsilon)$ is a quadratic refinement of $\lambda$ in the sense that
$$\mu(k x) = k^2 \mu(x) \quad \text{and} \quad \mu(p+q) = \mu(p)+\mu(q) + [\lambda(p,q)]$$
hold for all $k \in \mathbb Z$ and $x,y \in P$.

\begin{defn} \label{defn: lagrangian subspace}
Let $\varepsilon = \pm 1$, and let $(P, \lambda, \mu)$ be an $\varepsilon$-quadratic form.
A submodule $L \leq P$ will be called a \textit{Lagrangian} if $L = {L}^{\perp}$ and $\mu_{| L} = 0$.
\end{defn}

\begin{remark} \label{remark: refinement on even an symmetric form}
Let us note immediately, that a symmetric form (i.e. $\varepsilon = 1$) admits a quadratic refinement, only if it is even in the sense that $\lambda(p,p)$ is even for every $p \in P$. If that is the case, then there exists a unique refinement, namely $\mu(p) = \lambda(p,p)/2$. 
In particular the second condition in the definition of Lagrangian is implied by the first in this case and one can wholly disregard the quadratic refinement. 

In the case of an anti-symmetric form the situation is quite the opposite: Any torsionfree such form admits a subspace $L$ with $L^\perp = L$, and thus admits a quadratic refinement (since it can then be split apart into standard hyperbolics all of which do admit a quadratic refinement), but such a quadratic refinement is neither unique nor forced to vanish on $L$. 
\end{remark}

Let $M$ be a $2n$-dimensional compact oriented manifold. The main example of a bilinear form that we will consider is the intersection pairing 
$$\lambda: H_{n}(M)\otimes H_{n}(M) \longrightarrow \Z, \quad (x, y) \mapsto \langle D(x), j(y) \rangle,$$
where $D: H_{n}(M) \stackrel{\cong} \longrightarrow H^{n}(M, \partial M)$ is the Leftschetz duality isomorphism, $j: H_{n}(M) \longrightarrow H_{n}(M, \partial M)$ is the map induced by inclusion, and $\langle \cf\cdot, \cf \cdot \rangle: H^{n}(M, \partial M)\otimes H_{n}(M, \partial M) \longrightarrow \Z$ is the pairing between homology and cohomology. It follows from basic properties of the Lefschetz-duality isomorphism that $\lambda$ is $(-1)^{n}$-symmetric.
In the case that $M$ is a closed manifold it follows that the form $(H_{n}(M), \lambda)$ is non-singular. Throughout the paper we will refer to this bilinear form $(H_{n}(M), \lambda)$ as the \textit{intersection form} associated to $M$ and we will now endow it with a quadratic refinement when $M$ comes equipped with a highly connected tangential structure. With $\dim(M) = 2n$ the construction of the quadratic refinement breaks down in two separate cases depending on the parity of the integer $n$. Let again $\theta \colon B \rightarrow BO(2n+1)\langle n \rangle$ be a tangential structure.

\begin{Construction}
Suppose $n$ even and consider a $2n$-dimensional $\theta$-manifold $M$. It follows immediately that the $n$th Wu class $v_n(TM)$ vanishes. For even $n$, the element $v_n(TM)$ is characteristic for the modulo-$2$ intersection pairing (i.e.\ $\lambda(x,x) \equiv \lambda(\rho_2x,v_n)$ modulo $2$ for all $x \in H_n(M)$) by the Wu formula, and thus $v_n(TM) = 0$ forces this pairing to be even. From Remark \ref{remark: refinement on even an symmetric form}, we automatically obtain a quadratic refinement of the intersection form for such manifolds. 
\end{Construction}

In the case of odd $n$ one has to work harder to obtain a quadratic refinement for the intersection form of a $2n$-dimensional manifold. In general it was shown by Browder in his work on the Arf-Kervaire invariant, that a Wu-orientation, i.e. a lift of the classifying map $M \rightarrow BO$ of the normal bundle to the fibre of the map $v_{n+1} \colon BO \rightarrow K(\mathbb Z/2,n+1)$, determines such a refinement. Moreover, it follows from a calculation of Stong \cite{S 63}, that all $n$-parallelized $2n$-manifolds are \emph{canonically} Wu-oriented, unless $n$ is a Hopf dimension. We start out with the latter claim:

\begin{lemma}
We have $0 = v_{n+1} \in H^{n+1}(BO\langle n \rangle, \mathbb Z/2)$, whenever $n \neq 0,1,3,7$, and consequently $-\gamma$ admits a canonical Wu-orientation on $BO(2n+1)\langle n \rangle$.
\end{lemma}

\begin{proof}
For $n=5$ (or more generally when $n+1$ is not a power of $2$) this is immediate, since then $v_6 \in H^6(BO, \mathbb Z/2)$ (just like all non-two-power degree elements) is decomposable over the Steenrod-algebra. For general $n \geq 9$ Stong's calculations imply that $v_{n+1} \in H^{n+1}(BO\langle n-1 \rangle,\mathbb Z/2)$ lies in the image of the Postnikov section $BO\langle n-1 \rangle \rightarrow K(\pi_{n}BO,n)$, which in turn implies that $0 = v_{n+1} \in H^{n+1}(BO \langle n \rangle,\mathbb Z/2)$. 

For the second claim, note that in $BO\langle n \rangle$ we necessarily have $v_{n+1} = v_{n+1}(-\gamma)$, since the inversion on $BO\langle n\rangle$ is an automorphism of $H^{n+1}(BO\langle n \rangle)$ and this group is either 0 or $\mathbb Z/2$. This shows the existence of a Wu-orientation. The uniquness follows from obstruction theory: Lifts are parametrised by $H^n(BO(2n+1)\langle n \rangle,\mathbb Z/2) = 0$ once the obstruction $v_{n+1}$ vanishes.
\end{proof}

We shall now follow Brown \cite{B 72}, who gave a simple construction of Browder's quadratic refinement, see \cite{JR 78} for another published account. In fact, for $n$-parallelized manifolds Brown's construction may be simplified substantially:

\begin{Construction}
Let $n$ be odd with either $n = 5$ or $n \geq 9$ and $\theta\colon B \rightarrow BO(2n+1)\langle n \rangle$. Let $M$ be a $2n$-dimensional $\theta$-manifold. By the discussion above, the stable normal bundle of $M$ has a canonical Wu-orientation. Let us denote by $K(A,n)$ and $H(A,n)$ the Eilenberg-Mac Lane space and spectrum, respectively, whose sole non-trivial homotopy group is in degree $n$ with value $A$. Given a class $x \in H_n(M,\mathbb Z/2)$ we can then represent its Poincar\'e dual by a map $(M,\partial M) \rightarrow (K(\mathbb Z/2,n),pt)$. This map determines an element $\mu(x)$ in the relative bordism group $\Omega^{\langle n \rangle}_{2n}(K(\mathbb Z/2,n),pt)$ of $n$-parallelized manifolds (which is represented by the spectrum usually (mis)named $MO\langle n \rangle$). With our assumptions on the integer $n$, it turns out that this bordism group is isomorphic to $\Z/2$, see below.

By identifying $\Omega^{\langle n \rangle}_{2n}(K(\mathbb Z/2,n),pt) \cong \Z/2$, the assignment $x \mapsto \mu(x)$ yields the desired quadratic refinement of the intersection form (see \cite[Corollary 1.11]{B 72}).

The argument that $\Omega^{\langle n \rangle}_{2n}(K(\mathbb Z/2,n),pt) \cong \Z/2$ 
proceeds as follows: First observe that the map $\Sigma^\infty K(\mathbb Z/2,n) \rightarrow H(\mathbb Z/2,n)$ admits a lift into the homotopy fibre $F$ of 
$$Sq^{n+1}: H(\mathbb Z/2,n) \rightarrow H(\mathbb Z/2,2n+1).$$

Any such lift turns out to be a $(2n+1)$-equivalence by a direct calculation of the cohomology groups involved. In particular, $\Omega^{\langle n \rangle}_{2n}(K(\mathbb Z/2,n),pt) \cong \Omega^{\langle n \rangle}_{2n}(F)$. Smashing the fibre sequence defining $F$ with $MO\langle n \rangle$ gives an exact sequence
$$H_{n+1}(MO\langle n \rangle, \mathbb Z/2) \stackrel{Sq^{n+1}_*}{\longrightarrow} H_{0}(MO\langle n\rangle, \mathbb Z/2) \longrightarrow \Omega^{\langle n \rangle}_{2n}(F) \longrightarrow H_n(MO\langle n \rangle, \mathbb Z/2).$$ 
Clearly the fourth term is $0$, the second one $\mathbb Z/2$ and the first map may be identified with $$\chi(Sq^{n+1})^*: H^{n+1}(MO\langle n \rangle, \mathbb Z/2)^* \longrightarrow H^{0}(MO\langle n\rangle, \mathbb Z/2)^*$$
Since $\chi(Sq^{n+1})(u) = v_{n+1}u$ by definition of the Wu-class ($u \in H^0(MO\langle n \rangle, \mathbb Z/2)$ the Thom class), the map vanishes by the previous lemma and we obtain the desired isomorphism. 
\end{Construction}

The two most important facts about this refinement for us are that
\begin{enumerate}
\item [i)] it is preserved under codimension zero embeddings and 
\item[ii)] on a closed $(n-1)$-connected, $2n$-dimensional manifold $M$ it precisely obstructs representability of degree $n$ homology classes by embedded spheres with trivial normal bundle.
\end{enumerate}

Upon investing that every class in $H_n(M)$ can be represented by an embedded sphere unique up to regular homotopy by Haefliger's embedding theorem \cite{Ha 61} and the Smale-Hirsch theorem, the second statement is proven in \cite[Corollary 1.13]{B 72}. In fact, the standard machinery of surgery in the middle dimension then gives:

\begin{theorem} \label{corollary: lagrangian is a surgery solution}
Let $n \geq 4, n \neq 7$ and $\theta: B \rightarrow BO(2n+1)$ be weakly once stable with $B$ $n$-connected. Let $(M, \ell)$ be an $(n-1)$-connected, $2n$-dimensional, closed, $\theta$-manifold. Let $L \leq H_{n}(M)$ be a Lagrangian subspace for the self-intersection pairing. Then there exists a finite set $\Sigma$ and an embedding $f: \Sigma\times S^{n}\times D^{n} \rightarrow M$ that satisfies the following conditions:
\begin{enumerate} 
	\item[(a)] The $\theta$-structure on $\Sigma\times S^{n}\times D^{n}$ given by the composition,
	$$\xymatrix{T(\Sigma\times S^{n}\times D^{n})\oplus\epsilon^{1} \ar[rr]^-{Df\oplus\Id} && TM\oplus\epsilon^{1} \ar[rr]^{\ell} && \theta^{*}\gamma^{2n+1},}$$
	extends to a $\theta$-structure on $\Sigma\times D^{n+1}\times D^{n}$. 
	\item[(b)] The homology classes, $[f|_{\{\sigma\}\times S^{n}\times\{0\}}] \in H_{n}(M),$ $\sigma \in \Sigma,$ yield a basis for the Lagrangian subspace $L \leq H_{n}(M)$. 
\end{enumerate}
\end{theorem}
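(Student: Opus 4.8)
The plan is to realize a basis of $L$ by finitely many pairwise disjoint, normally framed, embedded $n$-spheres in $M$; condition (a) will then be essentially automatic, and the substance of the argument is producing the embeddings with \emph{trivial} normal bundles. This is the simply connected case of Wall's embedding theorem, and I will indicate the argument behind it.

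Since $M$ is closed, $(n-1)$-connected and $2n$-dimensional, Poincar\'e duality and the universal coefficient theorem show that $H_n(M)$ is finitely generated and free, and that a Lagrangian $L$ is a direct summand. Fix a basis $\{x_\sigma\}_{\sigma\in\Sigma}$ of $L$ with $\Sigma$ finite, and use the Hurewicz isomorphism $\pi_n(M)\cong H_n(M)$ to represent each $x_\sigma$ by a map $g_\sigma\colon S^n\to M$. Because $\theta$ has $n$-connected source, the composite $S^n\xrightarrow{g_\sigma}M\to B$ is nullhomotopic, so $g_\sigma^*(TM\oplus\epsilon^1)$ is a trivial bundle over $S^n$; as $2n>n$ this lies in the stable range, forcing $g_\sigma^*TM$ itself to be trivial, so that $g_\sigma^*TM\cong\epsilon^{2n}\cong TS^n\oplus\epsilon^n$ (using $TS^n\oplus\epsilon^1\cong\epsilon^{n+1}$). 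Feeding the splitting $g_\sigma^*TM\cong TS^n\oplus\epsilon^n$ into the Hirsch--Smale immersion theorem (valid since $n<2n$) produces an immersion, still denoted $g_\sigma$, that represents $x_\sigma$ and carries a trivialization of its normal bundle.

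Next I would convert the $g_\sigma$ into disjoint embeddings by the Whitney trick. After a small perturbation each $g_\sigma$ has only isolated transverse self-intersections, and distinct $g_\sigma$ meet transversely in isolated points. The selfintersection number of $g_\sigma$ equals $\mu(x_\sigma)$ — here using that, by the discussion preceding the theorem, the refinement $\mu$ agrees with Wall's geometric selfintersection form on $(n-1)$-connected $2n$-manifolds — and this vanishes since $\mu_{|L}=0$; similarly the intersection number of $g_\sigma$ with $g_\tau$ for $\sigma\ne\tau$ is $\lambda(x_\sigma,x_\tau)=0$ because $L=L^\perp$. Using $n\ge 3$ together with the simple connectivity of $M$ and of the complement of the union of the spheres in $M$, the Whitney trick removes all these intersection points in cancelling pairs, carrying the normal framings along, and leaves pairwise disjoint framed embeddings. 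Thickening along the framings gives an embedding $f\colon\Sigma\times S^n\times D^n\hookrightarrow M$ for which (b) holds by construction.

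Finally, for (a) one must extend the pulled-back $\theta$-structure from $\Sigma\times S^n\times D^n$ over the handles $\Sigma\times D^{n+1}\times D^n$. Working one component at a time, $T(D^{n+1}\times D^n)$ is canonically trivial and restricts on $S^n\times D^n$ to the trivialization of $T(S^n\times D^n)\oplus\epsilon^1$ supplied by the standard handle picture, so extending the $\theta$-structure across the handle amounts to extending a map $S^n\to B$ over $D^{n+1}$; the obstruction lies in $\pi_n(B)$, which vanishes since $B$ is $n$-connected. The main obstacle in the whole argument is the normal-bundle control in the preceding two paragraphs: it is precisely the combination of the $n$-connectivity of $B$ (which trivializes the stable normal data over the $n$-sphere) with the identification of $\mu$ with the geometric selfintersection form that lets the normal bundles be trivialized simultaneously with the removal of the double points, and neither ingredient alone would suffice.
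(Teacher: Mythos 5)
Your overall strategy --- Hurewicz, Hirsch--Smale immersion theory with a normal framing, then the Whitney trick driven by $\lambda|_{L}=0$ and $\mu|_{L}=0$ --- is indeed the classical route to the simply connected case of Wall's embedding theorem, which is exactly what the paper invokes (by citing Ranicki and Crowley--Macko--L\"uck) rather than reproving. Your treatment of (b), including the appeal to the paper's identification of the homological $\mu$ with Wall's geometric selfintersection form, is sound.

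However, your argument for (a) has a genuine gap. You assert that, once the canonical trivialization of $T(D^{n+1}\times D^n)$ is fixed, extending the $\theta$-structure across the handle reduces to extending a map $S^n\to B$ over $D^{n+1}$, with obstruction in $\pi_n(B)=0$. This forgets the bundle data: a $\theta$-structure on a manifold with trivialized tangent bundle is a map to the frame bundle $\mathrm{Fr}(\theta^*\gamma^{2n+1})\simeq\mathrm{hofib}(\theta)$, not a map to $B$, so the obstruction lives in $\pi_n(\mathrm{hofib}(\theta))$. Since $\pi_n(B)=0$ this group is $\mathrm{coker}\bigl(\pi_{n+1}(B)\to\pi_{n+1}(BO(2n+1))\bigr)$, which can be nonzero even for $n$-connected $\theta$ --- for instance $\theta$ a framing (so $B=*$) gives $\pi_n(O(2n+1))$. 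Were your claim correct, \emph{every} choice of normal framing on the embedded sphere would yield an extendable $\theta$-structure, which is false. The missing idea is that the splitting $g_\sigma^*TM\cong TS^n\oplus\epsilon^n$ you feed into Hirsch--Smale must be derived from a nullhomotopy of $S^n\xrightarrow{g_\sigma}M\to B$: such a nullhomotopy trivializes $g_\sigma^*(TM\oplus\epsilon^1)$ compatibly with an extension over $D^{n+1}$, and destabilizing that trivialization against the standard $TS^n\oplus\epsilon^1\cong\epsilon^{n+1}$ produces a splitting for which the resulting framed immersion's $\theta$-structure extends by construction. This is precisely the role of the lift of $x_\sigma\in H_n(M)$ to $\pi_{n+1}(l)$ (available since $\pi_n(B)=0$) together with Wall's map $\pi_{n+1}(l)\to S_n(M)$, which the paper discusses in the paragraph preceding the theorem; your proof chooses the splitting before ever looking at $B$ and therefore has no means of reconciling it with the map to $B$ afterward.
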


Note that the assumptions on $M$ automatically force $H_n(M)$ to be torsion-free and thus make the Lagrangian a free module as well, so that (b) indeed makes sense. Condition (b) in particular implies that the manifold $\widetilde{M}$ obtained by performing surgery on the embedding $f$ is $n$-connected, i.e. a homotopy sphere. 

For a proof see e.g.\ \cite[Proposition 5.2]{R 80} or \cite[Proposition 4.13]{CML} with two comments: First, the cited references use Wall's intersection pairing defined on the group of regular homotopy classes of immersions $S_n(M)$ (see \cite[Theorem 5.2]{W 70} or \cite{W 62b} for a definition). To compare this to Browder's, recall that a tangential structure map $\ell:M \rightarrow B$ gives a map $\pi_{n+1}(\ell) \rightarrow S_n(M)$ (see \cite[Lemma 4.60]{CML} for a pleasant exposition). For $n$-connected $B$ we then obtain a surjection $\pi_{n+1}(\ell) \rightarrow H_n(M)$ from the exact sequence of $\ell$ and it follows that the two compositions,
$$\xymatrix{\pi_{n+1}(\ell) \ar[r] &  H_n(M) \ar[r] & \mathbb Z/2 \quad\text{and}\quad\pi_{n+1}(\ell) \ar[r] & S_n(M) \ar[r] & \mathbb Z/2,}$$
agree, since both precisely obstruct the desired representability. Secondly, the references work with stable bundles over Poincar\'e complexes, but the Poincar\'e condition does not enter into the special case above, and the construction of the required bundle data really only uses weak once-stability (see the discussion in \cite[Sections 4.1 \& 5.1]{GRW 14}).

For the reader's convenience we supply a proof of the first fact:

\begin{proposition}
For an embedding $i: W \rightarrow M$ for a closed $2n$-manifold $M$ and a compact $2n$-manifold $W$, the map $i_*:H_n(W) \rightarrow H_n(M)$ preserves both the intersection and the self-intersection pairing.
\end{proposition}

\begin{proof}
That the intersection pairing is preserved is a simple calculation:
\begin{align*}
\lambda_M(i_*(x),i_*(y)) &= \langle i_*(x), D^Mi_*(y) \rangle_M \\
               &= \langle i_*(x), i^!D^{(W,\partial W)}y \rangle_M \\
               &= \langle i_!i_*(x), D^{(W,\partial W)}y \rangle_{(W,\partial W)} \\
						   &= \langle incl_*(x), D^{(W,\partial W)}y \rangle_{(W,\partial W)} \\
							 &= \lambda_W (x,y)
\end{align*}
Where the shriek maps are induced by $(M,\emptyset) \rightarrow (M,M-\Int W) \leftarrow (W,\partial W)$ (the right arrow inducing an isomorphism by excision), $incl$ denotes the inclusion $(W,\emptyset) \rightarrow W,\partial W)$ and $D^Mi_*(y) = i^! D^{(W,\partial W)} y$ follows from the naturality of cap products $H_*(X,A \cup B) \times H^*(X,A) \rightarrow H_*(X,B)$ applied to $(M,\emptyset, \emptyset) \rightarrow (M,M-\Int W, \emptyset)$ using the fact that $i_!([M]) = [W,\partial W]$. \\
The preservation of the self-intersection pairing now follows since also in $\Omega^{\langle n \rangle}(W,\partial W)$ we have $i_!([M]) = [W, \partial W]$ for the fundamental classes represented by the identity maps (since this can be checked locally around some point, by the definition of fundamental classes), so 
\begin{align*}
\mu_M(i_*(y)) &= \big[D^Mi_*(y) : (M,\emptyset) \rightarrow (K(\mathbb Z,n), pt)\big] \\
              &= \big[i^!D^{(W,\partial W)}y : (M,\emptyset) \rightarrow (K(\mathbb Z,n), pt)\big] \\
							&= (D^{(W,\partial W)}y)_* i_! ([M]) \\
							&= (D^{(W,\partial W)}y)_* ([W,\partial W]) \\
							&= \big[D^{(W,\partial W)}y: (W, \partial W) \rightarrow (K(\mathbb Z,n),pt)\big] \\
							&= \mu_{(W,\partial W)}(y)
\end{align*}
\end{proof}

\subsection{Cobordism categories of manifolds equipped with Lagrangian subspaces} \label{section: lagrangian cobordism category}
Fix a tangential structure $\theta: B \longrightarrow BO(2n+1)\langle n \rangle$ and suppose $n \geq 4, n \neq 7$.

\begin{defn} \label{defn: lagrangian cobordism category}
The non-unital topological category $\Cob^{\mathcal{L}}_{\theta}$ has as its object space the subspace of $\bpsi^{\Delta,n}_{\theta,2n}(\infty, 0)$ given by those tuples $(M, \ell, L)$ for which $L \leq H_{n}(M)$ is a Lagrangian subspace with respect to the intersection form $(H_{n}(M), \lambda, \mu)$. 
The space of morphisms is given by the following subspace of the product $\mathbb R \times \bpsi_{\theta,2n+1}(1 + \infty, 1) \times \bpsi^{\Delta,n}_{\theta,2n}(\infty, 0) \times \bpsi^{\Delta,n}_{\theta,2n}(\infty, 0)$: A tuple 
$$(t, (W,\ell),(M,\ell_M,L_M),(N,\ell_N,L_N))$$ 
is a morphism (from $(M,\ell_M,L_M)$ to $(N,\ell_N,L_N)$) if 
\begin{enumerate} 
\item[(i)] $(t,W,\ell) \in \Cob^{\mb{m}}_{\theta}((M,\ell_M),(N,\ell_N))$
\item[(ii)] $\iota_{in}(L_M) = \iota_{out}(L_N)$ as subspaces of $H_n(W|_{[0, t]})$.
\end{enumerate}
Here, 
$\iota_{\text{in}}: H_{n}(M) \longrightarrow H_{n}(W|_{[0, t]})$ and $\iota_{\text{out}}: H_{n}(N) \longrightarrow H_{n}(W|_{[0, t]})$
are the homomorphisms induced by the boundary inclusions
$M = W|_{0} \hookrightarrow W|_{[0, t]} \hookleftarrow W|_{t} = N$.
\end{defn}

\begin{remark}
We note that the forgetful functor $\Cob^{\mathcal L}_\theta \rightarrow \Cob_\theta$ is faithful. It is, however, the increase in extra structure on objects that makes the definition of the entire morphism space of $\Cob^{\mathcal L}_\theta$ more complicated than that of $\Cob_\theta$ as a cobordism no longer determines its source or target.
\end{remark}

When denoting a morphism in $\Cob^{\mathcal L}_\theta$ we will (nevertheless) usually drop the source and target from the notation and just write $(t, W, \ell)$ for $\left(t, (W,\ell),(M,\ell_M,L_M),(N,\ell_N,L_N)\right)$. We proceed to filter the cobordism category $\Cob^{\mathcal{L}}_{\theta}$ by subcategories analogous to those from Definition \ref{defn: disk category no L}. Let $D \subset (-\tfrac{1}{2}, 0]\times(-1, 1)^{\infty-1}$ be the $2n$-dimensional disk from (\ref{equation: fixed disk}). Let $\ell_{D}$ be the chosen $\theta$-structure on $D$ and let $\ell_{\R\times D}$ be the $\theta$-structure on $\R\times D$ induced by $\ell_{D}$. 

\begin{defn} \label{defn: disk category}
We define a sequence of subcategories of $\Cob^{\mathcal{L}}_{\theta}$ as follows:
\begin{enumerate} 
\item[(i)] The topological subcategory $\Cob^{\mathcal{L}, D}_{\theta} \subseteq \Cob^{\mathcal{L}}_{\theta}$ has as its objects those $(M, \ell, L)$ with
$(M, \ell) \in \Ob\Cob^{D}_{\theta}$. Similarly, it has as its morphisms those $(t, W, \ell)$ that give a morphism in $\Cob^{D}_{\theta}$.
\item[(ii)] Let $l \in \Z_{\geq -1}$.
The topological subcategory $\Cob^{\mathcal{L}, l}_{\theta} \subseteq \Cob^{\mathcal{L}, D}_{\theta}$ is the full-subcategory on those objects $(M, \ell, L)$ such that $M$ is $l$-connected, or in other words $(M, \ell) \in \Ob\Cob^{l}_{\theta}$.
\end{enumerate}
\end{defn}

In other words, the categories $\Cob^{\mathcal{L}, D}_{\theta}$ and $\Cob^{\mathcal{L}, l}_{\theta}$ are the evident pull-backs.

\begin{Observation} \label{remark: equality of n - L}
The forgetful functor $\Cob^{\mathcal{L}, n}_{\theta} \rightarrow \Cob^{n}_{\theta}$ is clearly an isomorphism, and thus $\Cob^{\mathcal{L}, n}_{\theta}$ can be considered a subcategory of $\Cob_{\theta}$.
\end{Observation}

\subsection{The technical theorems} \label{subsection: the main theorems}
We now state our results about the category $B\Cob^{\mathcal{L}}_{\theta}$ and granting them for the moment deduce the results in the introduction from them. Their proofs occupy the remaining sections, roughly in order. Let again $\theta\colon B \rightarrow BO(2n+1)\langle n\rangle$ be a map with $n \geq 4, n \neq 7$.

The first result occupies Section \ref{subsection: infinite loopspaces}: To state it recall that Nguyen \cite{N 15} constructed a $\Gamma$-space structure on $B\Cob_{\theta}$ underlain by disjoint union for which the equivalence to $\Omega^{\infty-1}MT\theta$ becomes one of infinite loopspaces.

\begin{theorem} \label{theorem: infinite loopspace structure}
The operation of disjoint union gives $B\Cob^{\mathcal{L}}_{\theta}$ the structure of a special $\Gamma$-space, such that the forgetful functor 
\[\Cob^{\mathcal{L}}_{\theta} \longrightarrow \Cob_{\theta}\]
induces a map of $\Gamma$-spaces.
In particular, $B\Cob^{\mathcal{L}, \emptyset}_{\theta}$ carries the structure of an infinite loopspace. 
\end{theorem}

\begin{remark}
One may wonder whether $B\Cob^{\mathcal{L}}_{\theta}$ itself is an infinite loopspace, the only question being whether the $\Gamma$-space structure from the above theorem makes it grouplike. This is indeed the case, but a proof is most readily given by showing $\Cob^{\mathcal L}_\theta$ to be equivalent to a cobordism category with no connectivity assumption on the morphisms (following the procedure in \cite{GRW 14}), where it is then immediate that the components form a group. Since we will not make use of this more general assertion we have omitted it.
\end{remark}

The next result is proven using the same ideas as \cite[Corollary 2.17]{GRW 14}, but in a different set-up. We indicate the necessary changes in Section \ref{section: Proof of disk inclusion}.

\begin{theorem} \label{theorem: inclusion of disk}
The inclusion $B\Cob^{\mathcal{L}, D}_{\theta} \hookrightarrow B\Cob^{\mathcal{L}}_{\theta}$ is a weak homotopy equivalence.
\end{theorem}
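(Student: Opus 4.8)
The plan is to follow the proof of \cite[Corollary 2.17]{GRW 14} essentially verbatim; the one point requiring extra attention is that the homological decorations $L$ and condition (ii) of Definition \ref{defn: lagrangian cobordism category} must be shown to ride along.

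As in loc.\ cit., one interpolates between the semisimplicial nerves $N_\bullet\Cob^{\mathcal L, D}_\theta$ and $N_\bullet\Cob^{\mathcal L}_\theta$ by an auxiliary semisimplicial space $Z_\bullet$: a $p$-simplex of $Z_\bullet$ is a $p$-simplex of $N_\bullet\Cob^{\mathcal L}_\theta$ together with a germ, near the region $\R\times\big((-1,0]\times(-1,1)^{\infty-1}\big)$, of an embedded tube modelled on the standard $\R\times D$ (restricting to a germ of an embedded disk over each object occurring in the string) and a homotopy of the restricted $\theta$-structure to the standard one $\ell_{\R\times D}$, rel the behaviour near the two ends. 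Forgetting this extra datum gives a map $Z_\bullet \to N_\bullet\Cob^{\mathcal L}_\theta$, whose fibre over a string is the space of all such germ-plus-homotopy data; this space is weakly contractible by the isotopy extension theorem together with the path-lifting property of the restriction fibration $\Bun(TW,\theta^*\gamma^{2n+1}) \to \Bun(T(\R\times D),\theta^*\gamma^{2n+1})$, carried out in families, with the empty object handled exactly as in loc.\ cit. On the other hand every object of $\Cob^{\mathcal L, D}_\theta$ already meets the distinguished region in $\R\times D$ (resp.\ $D$) with the standard structure, hence carries a tautological germ and a constant homotopy, so there is a semisimplicial map $N_\bullet\Cob^{\mathcal L, D}_\theta \to Z_\bullet$; that it is a levelwise weak equivalence is a parametrised isotopy-extension-and-normalisation argument identical to the one in \cite[Corollary 2.17]{GRW 14}, using the chosen germ and structure-homotopy to push a simplex of $Z_\bullet$, ambiently in $\R^\infty$, onto one that literally agrees with $\R\times D$ (with the structure $\ell_{\R\times D}$) near the distinguished region. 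Passing to geometric realisations, the composite $B\Cob^{\mathcal L, D}_\theta \hookrightarrow B\Cob^{\mathcal L}_\theta$ becomes a weak homotopy equivalence.

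It then remains only to observe that the $\mathcal L$-data is carried along coherently. But all the maps above move a simplex only by ambient isotopies of $\R^\infty$ and by homotopies of $\theta$-structures, and neither of these changes the underlying manifolds $M$, $N$, $W$ or the boundary inclusions $W|_{0} \hookrightarrow W|_{[0,t]} \hookleftarrow W|_{t}$ up to canonical diffeomorphism. Consequently they induce canonical isomorphisms on $H_{n}(-)$, under which a Lagrangian subspace is sent to a Lagrangian subspace and the relation $\iota_{in}(L_{M}) = \iota_{out}(L_{N})$ is preserved; thus $Z_\bullet$ and all functors in sight respect the decorations. The main obstacle is therefore the one technical input cited above — weak contractibility, in families over a $p$-fold composite of cobordisms, of the space of $\theta$-embedded tubes together with the normalising structure-homotopy — but that is precisely \cite[Corollary 2.17]{GRW 14} and requires no new argument here.
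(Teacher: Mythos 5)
Your proposal takes a genuinely different route from the paper's own proof, and the route you take has a gap that the citation you lean on does not fill.

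The paper first passes (via Theorem \ref{theorem: replacement of nerve} and Proposition \ref{proposition: equivalence to flexible model}) to the flexible semi-simplicial models, reducing to the inclusion $\mb{X}^{\mathcal{L}, D}_{\bullet} \hookrightarrow \mb{X}^{\mathcal{L}}_{\bullet}$, and then constructs an explicit levelwise retraction
$r_{p}(a, \varepsilon, (W, \ell_{W}), V) = \bigl(a, \varepsilon, (\R\times\widehat{D}) \cup \phi(W), \ell_{\R\times\widehat{D}} \cup \phi_*\ell_{W}, \phi^{W}_{*}(V)\bigr)$,
where $\widehat{D} \cong S^{2n}$ is a fixed sphere containing the standard disk $D$ and $\phi$ translates $W$ clear of the distinguished slab. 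The $\mathcal{L}$-data is transported along the isomorphism $\phi^W_*$ on $H_n$; the key numerical facts are that $H_n(S^{2n})=0$ and that codimension-zero embeddings preserve the quadratic form, so Lagrangians go to Lagrangians. That is the technique of \cite[Proposition 2.16 and Corollary 2.17]{GRW 14}. You instead introduce an intermediate space $Z_\bullet$ carrying germ data (an embedded tube modelled on $\R\times D$ and a $\theta$-structure homotopy) and argue by contractibility of the fibers of $Z_\bullet \to N_\bullet\Cob^{\mathcal L}_\theta$ via isotopy extension plus a bundle-restriction fibration. These are not the same argument, and your opening claim to be following \cite[Corollary 2.17]{GRW 14} ``essentially verbatim'' is not accurate.

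The substantive gap is the empty object. If a simplex of $N_\bullet\Cob^{\mathcal L}_\theta$ has $\emptyset$ as one of its objects (or the empty cobordism as one of its morphisms), then the space of germs of embedded tubes ``restricting to a germ of an embedded disk over each object'' is empty, not contractible, so the claimed contractibility of the fibers of $Z_\bullet \to N_\bullet\Cob^{\mathcal L}_\theta$ fails outright on those components. You defer this to ``handled exactly as in loc.\ cit.,'' but there is no analogous handling in \cite{GRW 14} because their argument is the retraction construction, which sidesteps the issue entirely ($r_p$ sends the empty simplex to $\R\times\widehat{D}$, a perfectly good non-empty element of $\mb{X}^{\mathcal{L}, D}_p$). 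You would need a genuinely new patch here — for instance, grafting on a fixed sphere cylinder exactly as the paper does, at which point you have recovered the paper's construction. The part of your proposal that addresses the $\mathcal{L}$-data (ambient isotopies and structure homotopies inducing canonical isomorphisms on $H_n$ preserving the intersection and selfintersection pairings and hence the Lagrangian condition and the relation $\iota_{\text{in}}(L_M) = \iota_{\text{out}}(L_N)$) is correct and is indeed the easy part; the difficulty is upstream of it.
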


The next theorem is proven in Section \ref{section: surgery on objects below the middle dimension}. It is the first result of the paper whose proof requires a substantial amount of technical work. The constructions that go into it, however, closely resemble those from \cite{GRW 14}.

\begin{theorem} \label{theorem: surgery on objects below middle dim}
Let $l \leq n-1$ and assume that $B$ is $l$-connected and $\pi_{l+1}(B)$ is finitely generated. Then the inclusion $B\Cob^{\mathcal{L}, l}_{\theta} \hookrightarrow B\Cob^{\mathcal{L}, l-1}_{\theta}$ is a weak homotopy equivalence.
\end{theorem}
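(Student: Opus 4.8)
The plan is to mimic the proof of Theorem \ref{theorem: connected object equivalence} (i.e.\ \cite[Theorem 4.1]{GRW 14}), since below the middle dimension the Lagrangian data is essentially invisible: every object $(M,\ell,L)\in\Ob\Cob^{\mathcal L}_\theta$ carries the \emph{same} $L\le H_n(M)$ as its image in $\Cob^{\mathbf m}_\theta$, and the extra condition on morphisms in Definition \ref{defn: lagrangian cobordism category}(ii) only constrains how $L$ maps into the cobordism. First I would set up a semi-simplicial resolution $X_\bullet \to N_\bullet\Cob^{\mathcal L,l-1}_\theta$ (or work with the appropriate Galatius--Randal-Williams model of the classifying space in terms of $\bpsi^\Delta_\theta$), whose $p$-simplices over a given chain of composable morphisms consist of surgery data implementing the passage from an $(l-1)$-connected object to an $l$-connected one: a choice of finitely many embedded spheres $S^l\times D^{2n-l}\hookrightarrow M$ (equipped with $\theta$-null-bordism data) killing $\pi_l(M)$, together with compatible data along the cobordisms in the chain. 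Since $2l \le 2n-2 < 2n$, such surgeries are below the middle dimension and hence can be carried out parametrically exactly as in \cite[Sections 4--5]{GRW 14}; the $l$-connectivity of $B$ guarantees the relevant obstruction groups vanish so the surgery data is $\theta$-compatible.

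The augmentation $X_\bullet \to N_\bullet\Cob^{\mathcal L,l-1}_\theta$ should be shown to be a weak homotopy equivalence after geometric realization by the standard ``contractible space of choices'' argument: the fibre over a point is the (semi-simplicial) space of surgery data, which is non-empty (surgery below the middle dimension on an $(l-1)$-connected manifold can always be done, using that spheres of dimension $l\le n-1$ embed with trivial normal bundle and can be made disjoint by general position, $2l+1 < 2n$) and has vanishing higher homotopy (any finite family of such data can be connected, again by general position in the relevant dimension range). One then identifies $|X_\bullet|$ with $B\Cob^{\mathcal L,l}_\theta$ by ``performing the surgery'': the $p$-simplices with their surgery data give, after tracing through, a chain of composable morphisms between $l$-connected objects. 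The key point requiring care here is that the surgery performed on the object manifolds must be propagated coherently along the cobordisms, and that the resulting cobordisms still satisfy the Lagrangian compatibility condition (ii) -- but since the surgeries only modify $\pi_l$ with $l\le n-1$, they induce isomorphisms on $H_n$ of all manifolds involved (by the homology long exact sequence of the trace of the surgery, the trace being $(l+1)$-connected over both ends and $l+1\le n$), so $L$ and the maps $\iota_{\mathrm{in}},\iota_{\mathrm{out}}$ are carried along unchanged and condition (ii) is preserved automatically.

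Concretely, the main steps in order: (1) define the semi-simplicial space $X_\bullet$ of ``$\Cob^{\mathcal L,l-1}_\theta$-chains equipped with surgery data of degree $l$'', adapting \cite[Definition 4.2 ff.]{GRW 14}; (2) prove the augmentation $|X_\bullet|\xrightarrow{\simeq} B\Cob^{\mathcal L,l-1}_\theta$ is a weak equivalence, by checking non-emptiness and higher-connectivity of the fibres via general position below the middle dimension, invoking $l$-connectivity of $B$ for the $\theta$-structures; (3) construct a map $|X_\bullet|\to B\Cob^{\mathcal L,l}_\theta$ by performing the surgeries parametrically (here one imports wholesale the parametrized surgery machinery of \cite[Sections 4--5]{GRW 14}, noting it applies verbatim since nothing middle-dimensional is touched), and check it is compatible with the Lagrangian structure using the connectivity observation above; (4) show this map is a weak equivalence, again by a resolution/fibre argument, or by exhibiting an explicit homotopy inverse coming from the forgetful map $\Cob^{\mathcal L,l}_\theta \hookrightarrow \Cob^{\mathcal L,l-1}_\theta$ together with the contraction of the space of surgery data. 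The main obstacle I anticipate is step (3): making the parametrized surgeries genuinely coherent with the $\bpsi^\Delta_\theta$-topology -- i.e.\ verifying that the family of Lagrangian subspaces behaves continuously under the surgery move -- but this is precisely the kind of statement addressed by Construction \ref{construction: one parameter family of subspaces} and Proposition \ref{proposition: continuity of the family}, which were set up for exactly this purpose, so the verification should reduce to checking that the surgery one-parameter families are locally generated by vector fields, which they are by \cite[Sections 4 and 5]{GRW 14}.
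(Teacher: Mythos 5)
The overall strategy you outline — resolve $\mb{D}^{\mathcal{L},l-1}_\bullet$ by a bi-semi-simplicial space of $l$-surgery data, show the augmentation is an equivalence by a contractibility-of-choices argument, then implement the parametrised surgery move of \cite{GRW 14} and track the Lagrangians via Construction \ref{construction: one parameter family of subspaces} and Proposition \ref{proposition: continuity of the family} — is exactly the architecture of the paper's proof. However, there is a genuine gap in your step (3), and it is precisely the point where this theorem ceases to be a formal shadow of \cite[Theorem 4.1]{GRW 14}.

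You assert that the surgeries in degree $l\le n-1$ ``induce isomorphisms on $H_n$ of all manifolds involved,'' citing that the trace is ``$(l+1)$-connected over both ends,'' and conclude that the Lagrangian $L$ and the maps $\iota_{\mathrm{in}},\iota_{\mathrm{out}}$ are ``carried along unchanged and condition (ii) is preserved automatically.'' This is false in the boundary case $l=n-1$. The trace $W$ of an $(n-1)$-surgery is built by attaching a handle of index $n$, so the pair $(W,M)$ is only $(n-1)$-connected and $H_n(W,M)\cong\mathbb Z$; dually $(W,\widetilde M)$ is $n$-connected and $H_{n+1}(W,\widetilde M)\cong\mathbb Z$. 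Hence $H_n(M)\to H_n(W)$ can fail to be surjective and $H_n(\widetilde M)\to H_n(W)$ can fail to be injective, i.e.\ $(n-1)$-surgery genuinely changes the middle-dimensional homology of a $2n$-manifold. The paper flags this explicitly before Proposition \ref{proposition: lagrangian n-1 surgery}: ``it is possible for an $(n-1)$-surgery on a $2n$-dimensional manifold to alter the homology groups in degree $n$.'' For $l\le n-2$ (Proposition \ref{proposition: below midd dim k surgery}) your reasoning is sound, since then $H_n(W,M)=H_{n+1}(W,M)=0$ and the inclusions $M'\hookrightarrow M$, $M'\hookrightarrow\widetilde M$ do induce isomorphisms on $H_n$. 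But the single case $l=n-1$ is the one that carries the real content, and it is handled in the paper by Proposition \ref{proposition: lagrangian n-1 surgery}: that the transported subspace $\widetilde L=\beta(\alpha^{-1}(L))$ is again Lagrangian. Its proof is not formal — it splits into the subcases of the surgered class $x$ having infinite versus finite order, and in the finite-order case one must analyse the auxiliary class $y'=\partial(\alpha_{n+1})$ (Claims \ref{claim: L' lagrangian} and \ref{claim: basic properties of y}) and use nondegeneracy of the intersection form on $\widetilde M$ to show $\widetilde L^\perp\le\widetilde L$. Relatedly, in the implementation step (Lemma \ref{lemma: commutativity of restrictions}) the identity $V_j^t|_c=\beta_c^t(\alpha_c^{-1}(V_j|_c))$ is immediate for $l<n-1$ because $\alpha_c$ is an isomorphism, but for $l=n-1$ one has to run a diagram chase using surjectivity of the relative restriction $\bar\pi_c$. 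So the part of your argument that you dismiss as ``automatic'' is exactly the new mathematics required; without Section \ref{section: preliminaries on surgery} the proof does not close.

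A smaller inaccuracy: the surgery data in the resolution are not embeddings $S^l\times D^{2n-l}\hookrightarrow M$ of spheres into the object manifolds, but embeddings $e\colon\Lambda\times(-6,-2)\times\mathbb R^{d-l-1}\times D^{l+1}\to\mathbb R\times(-1,1)^{\infty-1}$ meeting $W$ along $\Lambda\times(-6,-2)\times\mathbb R^{d-l-1}\times\partial D^{l+1}$, coherently over the whole cobordism chain (Definition \ref{defn: dimension resolution n-2}); this is what makes Proposition \ref{proposition: regular value pair connectivity n-2} and the continuity statements work. Also, triviality of normal bundles is not automatic from $l\le n-1$: it comes from condition (iii) of Definition \ref{defn: dimension resolution n-2} and the $l$-connectivity of $B$, which you acknowledge only in passing.
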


By combining the theorems stated above, we obtain the weak homotopy equivalence 
$$B\Cob^{\mathcal{L}, n-1}_{\theta} \simeq B\Cob^{\mathcal{L}}_{\theta}$$
in analogy with Theorem \ref{theorem: connected object equivalence}, whenever $B$ is $(n-1)$-connected and $\pi_n(B)$ finitely generated. Finally we have:

\begin{theorem} \label{theorem: surgery on objects in degree n} 
Suppose that $\theta$ is weakly once-stable and that $B$ is $n$-connected. Then the inclusion $B\Cob^{\mathcal{L}, n}_{\theta} \hookrightarrow B\Cob^{\mathcal{L}, n-1}_{\theta}$ 
is a weak homotopy equivalence.
\end{theorem}

We again emphasize that this theorem is in stark contrast to the situation for cobordism categories without Lagrangians. The proof will occupy Sections \ref{section: surgery on objects in the middle dimension} and \ref{section: contractibility n}. Via the isomorphism $\Cob^{\mathcal{L}, n}_{\theta} \cong \Cob^{n}_{\theta}$ the above theorems imply that there is a weak homotopy equivalence $B\Cob^{n}_{\theta} \simeq B\Cob^{\mathcal{L}}_{\theta}$ whenever $\theta$ satisfies the conditions of Theorem \ref{theorem: surgery on objects in degree n}.

\subsection{Deduction of the main results}
Supposing these four theorems, we now proceed to prove all results stated in the introduction. Let $n \geq 4$ be an integer except $7$ and specialise to $\theta^n$, the $n$-connected cover $BO(2n+1)\langle n \rangle \longrightarrow BO(2n+1)$. Theorem A, follows immediately by combining the weak homotopy equivalence $B\Cob^{n}_{\theta^{n}} \simeq B\Cob^{\mathcal{L}}_{\theta^{n}}$ with the weak homotopy equivalences 
$$B\mathcal{M}_{2n+1} \simeq B\mathcal{M}_{\theta^{n}} \simeq B\Cob^{n, \emptyset}_{\theta^{n}},$$ 
proven in Section \ref{section: reduction to a monoid}. Corollary B then follows by combining this with Theorem \ref{theorem: infinite loopspace structure}. 
Corollary C follows by combining the weak homotopy equivalence $B\mathcal{M}_{2n+1} \simeq B\Cob^{\mathcal{L}, \emptyset}_{\theta^{n}}$, with the homology equivalence $\mb{B}_{\infty} \rightarrow \Omega_{0}B\mathcal{M}_{2n+1}$ established in Section \ref{subsection: group completion}. 
To obtain Corollary D we do the following.  Consider the commutative diagram
$$\begin{xy}\xymatrix{\coprod_W \BDiff(W,D^{2n+1}) \ar[r]\ar[d]& \Omega_0 B \Cob_{\theta^{n}} \\
                          \mb{B}_{\infty} \ar[r]^-{\simeq_{H_*}}& \Omega_0 B \Cob_{\theta^{n}}^{\mathcal L} \ar[u]}
\end{xy}$$
and notice that the downwards left arrow (which is given by the inclusions of the terms of the colimit sequence into the colimit) is surjective on homology and thus injective in cohomology (with rational coefficients). The kernel of the left hand vertical map in cohomology is therefore the same as that of the top horizontal arrow. Composing with the weak homotopy equivalence $\Omega B\Cob_{\theta^{n}} \stackrel{\simeq} \longrightarrow \Omega^\infty\MT\theta^{n}$ yields the claim.


\section{The infinite loopspace structure} \label{subsection: infinite loopspaces}
In this section we equip $B\Cob^{\mathcal{L}, \emptyset}_{\theta}$ with the structure of a $\Gamma$-space, proving Theorem \ref{theorem: infinite loopspace structure}. 
In order to do this we first need to introduce a more convenient model for the nerve of $\Cob^{\mathcal{L}, \emptyset}_{\theta}$.

\subsection{A model for the nerve} 
Let 
$\theta\colon B \longrightarrow BO(2n+1)\langle n \rangle$ be  a tangential structure, with $n \geq 4, n \neq 7$.

\begin{defn} \label{defn: convenient model for nerve}
For each $p \in \Z_{\geq 0}$, $\mb{C}_{p}$ and $\widehat{\mb C}_p$, respectively, are defined to be the spaces of tuples $(a, \varepsilon, W, \ell)$ where:
\begin{itemize} 
\item[(i)] $a \in \R^{p+1}$, $\varepsilon \in (0, \infty)^{p+1}$ are $(p+1)$-tuples 
with the property that $a_{i} + \varepsilon_{i} < a_{i+1} - \varepsilon_{i+1}$, and $a_i \leq a_{i+1}$ for all $i = 0, \dots, p-1$, respectively;
\item[(ii)] $(W, \ell)$ is an element of $\bpsi_{\theta}(\infty, 1)$ with the property that $(W, \ell)|_{(a_{i}-\varepsilon_{i}, a_{i}+\varepsilon_{i})}$ is cylindrical for all $i = 0, \dots, p$, i.e. equal to a cylinder on $W_{|a_i}$.
\end{itemize}
Both $\mb{C}_{p}$ and $\widehat{\mb C}_p$ are topologized as subspaces of $\R^{p+1}\times\R^{p+1}\times\bpsi_{\theta}(\infty, 1)$ and the assignments 
\[[p] \longmapsto \mb{C}_{p} \quad \text{and} \quad [p] \longmapsto \widehat{\mb{C}}_{p}\] 
yield semi-simplicial spaces with $i$-th face map given by deleting $a_i$ and $\varepsilon_i$. $\widehat{\mb{C}}_{\bullet}$ is in fact a simplicial space with $i$-th degeneracy map doubling $a_i$ and $\varepsilon_i$.
\end{defn} 

$\mb C_\bullet$ appears in the proof of \cite[Proposition 2.14]{GRW 14} as the nerve of a certain poset $\mb D_\theta^\perp$ (a slightly different poset appears in the proof of \cite[Theorem 3.9]{GRW 09} under the same name; we, however, want to reserve the letter $\mb D$ for yet another object that incorporates Lagrangians and will be made use of in the same way $\mb D$ is used in the later parts of \cite{GRW 14}, see for example Section \ref{section: surgery on objects below the middle dimension}).

For each $p$-simplex $(a, \varepsilon, W, \ell) \in \mb{C}_{p}$, the tuple $(a, W, \ell)$ determines a unique element in $N_{p}\Cob_{\theta}$ by remembering only the various $W|_{[a_i,a_{i+1}]}$ (appropriately translated and with infinite collars attached) with their induced $\theta$-structures. This correspondence defines a semi-simplicial map $\mb{C}_{\bullet} \longrightarrow N_{\bullet}\Cob_{\theta}$ which is easily shown to be a level-wise weak homotopy equivalence (see \cite[Theorem 3.9]{GRW 09} for details). 

The reason for including the version with degeneracies is as follows: We shall momentarily construct a simplicial $\Gamma$-space using (Lagrangian enhancements of) the spaces $\widehat{\mb C}_p$ that is levelwise special. The speciality is then automatically inherited by its realization and will give the main result of the section; the same is not true when working with $\mb C_p$ and its associated \emph{semi}-simplicial $\Gamma$-space. We thank Johannes Ebert for pointing out this oversight in a previous version. We shall furthermore use the spaces $\widehat{\mb C}_\bullet$ to correct a small mistake in the proofs of \cite[Theorems 4.5 \& 5.14]{GRW 14} in turn, see Lemma \ref{lemma: inclusion of zero simplices}. To compare the two versions we have:

\begin{lemma}\label{lemma: equivalence of geometric realizations}
The simplicial space $\hat{\mb C}_\bullet$ is \emph{good}, i.e. its degeneracies are closed cofibrations, and the inclusion $\mb C_\bullet \longrightarrow \widehat{\mb C}_\bullet$ is a levelwise equivalence and thus in particular realizes to a weak equivalence.
\end{lemma}

By the first part it does not matter whether we use the thin or the thick realisation for the target, as they are equivalent by \cite[Proposition A.1, (iv)]{S 74}.

\begin{proof}
To see that $s_{i}$ is a closed cofibration it is enough (e.g. by combining \cite[Satz 3.13 and Satz 3.26]{DKP 70}) to show that (i) $s_{i}(\widehat{\mb{C}}_{p})  \subset \widehat{\mb{C}}_{p+1}$ is given as the vanishing set of some real-valued function and (ii) admits a halo $U$ that contracts onto it (recall that any neighbourhood given by a strict inequality for some positive function to the real numbers is a halo \cite[Definition 3.1]{DKP 70}).

For property (i) consider
\[
(a, \varepsilon, W, \ell) \longmapsto  |\varepsilon_i - \varepsilon_{i+1}| + a_{i+1} - a_{i}. 
\]
Property (ii) is witnessed by the neighborhood 
$$
U = \left\{(a, \varepsilon, W, \ell) \in \widehat{\mb{C}}_{p+1} \; | \; \frac{a_{i+1} - a_{i}}{\min\{\varepsilon_{i}, \varepsilon_{i+1}\}} <1\; \right\},
$$
with the deformation retraction moving $a_{i+1}$ towards $a_i$ (using the fact that $W$ is cylindrical between $a_i$ and $a_{i+1}$ for elements in $U$) and shrinking $\varepsilon_i$ and $\varepsilon_{i+1}$ as required.

The second part is trivial.
\end{proof}

\begin{defn} \label{defn: convenient model for nerve L}
For each $p \in \Z_{\geq 0}$, $\mb{C}^{\mathcal{L}}_{p}$ and $\widehat{\mb{C}}^{\mathcal{L}}_{p}$, respectively, are the spaces of tuples $(a, \varepsilon, W, \ell, L)$ subject to the following conditions:
\begin{enumerate}
\item[(a)] The tuple $(a, \varepsilon, W, \ell)$ is an element of  $\mb{C}_{p}$ or $\widehat{\mb C}_p$, respectively.
Furthermore, for each $i = 0, \dots, p$, the pair $(W|_{[a_{i}, a_{i+1}]}, W|_{a_{i+1}})$ is $(n-1)$-connected, 
\item[(b)] $L = (L_{0}, \dots, L_{p})$ is a $(p+1)$-tuple with $L_{i} \leq H_{n}(W|_{a_{i}})$ a Lagrangian subspace for each $i = 0, \dots, p$.
\item[(c)] For each $i = 0, \dots, p,$ we require $\iota^{\text{in}}_{i}(L_{i}) = \iota^{\text{out}}_{i+1}(L_{i+1})$ where 
$$\iota^{\text{in}}_{i}: H_{n}(W|_{a_{i}}) \longrightarrow H_{n}(W|_{[a_{i}, a_{i+1}]}) \quad \text{and} \quad \iota^{\text{out}}_{i+1}: H_{n}(W|_{a_{i+1}}) \longrightarrow H_{n}(W|_{[a_{i}, a_{i+1}]})$$ 
are the maps induced by inclusion.
\end{enumerate}
\end{defn}

Each space $\mb{C}^{\mathcal{L}}_{p}$ is topologized as a subspace of the product $\mb{C}_{p}\times(\Ob\Cob^{\mathcal{L}}_{\theta})^{p+1}$. 
The assignment $[p] \mapsto \mb{C}^{\mathcal{L}}_{p}$ defines a semi-simplicial space with face maps defined similarly to the previous definition and similarly for the simplicial version. As in the case with $\mb{C}_{\bullet}$, there is a semi-simplicial map $\mb{C}^{\mathcal{L}}_{\bullet} \longrightarrow N_{\bullet}\Cob_{\theta}^{\mathcal{L}}$ and applying the same arguments as before we obtain weak equivalences
\begin{equation} \label{models for cobl}
|\widehat{\mb{C}}^{\mathcal{L}}_{\bullet}| \longleftarrow |\mb{C}^{\mathcal{L}}_{\bullet}| \longrightarrow B\Cob_{\theta}^{\mathcal{L}}.
\end{equation}
Corresponding to the sequence of subcategories from Definition \ref{defn: disk category}, we have a sequence of sub-semi-simplicial spaces 
\begin{equation} \label{equation: filtration of c-spaces}
\mb{C}^{\mathcal{L}, l}_{\bullet} \subset \cdots \subset \mb{C}^{\mathcal{L}, -1}_{\bullet} \subset \mb{C}^{\mathcal{L}, D}_{\bullet} \subset \mb{C}^{\mathcal{L}}_{\bullet},
\end{equation}
defined analogously to the subcategories of Definition \ref{defn: disk category}. Applying the construction from \cite[Theorem 3.9]{GRW 09} again, we also obtain weak homotopy equivalences,
$$|\mb{C}^{\mathcal{L}}_{\bullet}| \simeq B\Cob^{\mathcal{L}}_{\theta}, \quad  |\mb{C}^{\mathcal{L}, D}_{\bullet}| \simeq B\Cob^{\mathcal{L}, D}_{\theta},  \quad |\mb{C}^{\mathcal{L}, l}_{\bullet}| \simeq B\Cob^{\mathcal{L}, l}_{\theta}.$$
The analoguous result using the full simplicial spaces also holds, but we shall have no need for it.

\subsection{The $\Gamma$-space}
We now proceed to give $|\widehat{\mb{C}}_{\bullet}^{\mathcal L}|$ the structure of a $\Gamma$-space. We follow \cite{N 15} and make every $\widehat{\mb{C}}^{\mathcal L}_p$ into the underlying space of a $\Gamma$-space $A_p$, which assemble into a simplicial $\Gamma$-space $A_\bullet$. By (\ref{models for cobl}) the realization (in the simplicial direction) is then a $\Gamma$-space underlain by $B\Cob_{\theta}^{\mathcal{L}}$. 

\begin{Construction} \label{theorem: gamma space structure on D-L}
Let $S$ be a finite set. We define $A_p(S)$ to be the subset of $(\widehat{\mb{C}}^{\mathcal{L}}_{p})^S$ 
consisting of those tuples 
$$(a, \varepsilon, W, \ell, L)$$
that satisfy for all $s,s' \in S$:
\begin{enumerate}
\item[(a)] $a_{s} = a_{s'}$ and $\varepsilon_s = \varepsilon_{s'}$
\item[(b)] the submanifolds, 
$W_{s}$ and $W_{s'} \; \subset \; \R\times(-1, 1)^{\infty}$,
are disjoint. 
\end{enumerate}
For the rest of this section, we will suppress $\varepsilon$ and $\ell$ from notation as they will play no role. Given a morphism $\phi: S \longrightarrow T$ (in the category $\Gamma$), the map 
$$A_{p}(\phi): A_{p}(T) \longrightarrow A_{p}(S)$$ 
is defined by sending a tuple $(a, W, L) \in A_{p}(T)$ to the element in $A_p(S)$ whose entry in the $s$-th spot is:
\[\left(a, \; \bigsqcup_{t \in \phi(s)}W_{t}, \sum_{t \in \phi(s)} L_{t}\right).\]
It is readily checked that this really makes $A_p$ into a $\Gamma$-space and indeed $A_\bullet$ into a simplicial $\Gamma$-space. 
\end{Construction}

Note that $A_\bullet(1) = \widehat{\mb{C}}^{\mathcal{L}}_{\bullet}$ and therefore
 \[|A_\bullet|(1) =  |\widehat{\mb{C}}^{\mathcal{L}}_\bullet| \simeq B\Cob^{\mathcal L}_\theta.\]

\begin{lemma}\label{a is special}
The $\Gamma$-space $A_p$ is special for every $p$.
\end{lemma}

\begin{proof}
We need to show that the Segal maps
\[p_{k}: A_{p}(k) \longrightarrow A_{p}(1)^k = (\widehat{\mb{C}}^{\mathcal{L}}_{p})^k\]
are weak homotopy equivalences. Note that $p_k$ is just the inclusion of disjoint $k$-tuples of manifolds (with equal $a$'s and $\varepsilon$'s) into the space of all $k$-tuples. But clearly an arbitrary $k$-tuple can be made disjoint by shrinking the $i$-th manifold for example into $\mathbb R \times (\frac i k,\frac {i+1} k) \times (-1,1)^{\infty-1}$ and so the fact that the space of embeddings of a manifold into $\mathbb R^\infty$ is contractible gives the claim, compare e.g. \cite[Proposition 5.10]{N 15}.
\end{proof}

\begin{proposition} \label{corollary: geometric realization of gamma space}
The geometric realization $B\Cob^{\mathcal L}_\theta \simeq |\widehat{\mb{C}}^{\mathcal{L}}_{\bullet}|$ has the structure of a special $\Gamma$-space.
\end{proposition}

\begin{proof}
All that remains to check is that 
\[p_{k}: |A_{\bullet}|(k) \longrightarrow |A_{\bullet}|(1)^k\]
is a weak equivalence. But the same argument as in Lemma \ref{lemma: equivalence of geometric realizations} shows that the simplicial space $A_\bullet(k)$ is good for every $k$, whence the claim follows from Lemma \ref{a is special}, since levelwise weak equivalences between good simplicial spaces are preserved by realization.
\end{proof}

Alternatively, we could have used the thick realisation of the simplicial space $A_\bullet(S)$ and applied \cite[Theorem 7.2]{ERW 17} bypassing the goodness of $\widehat{\mb{C}}^{\mathcal{L}}_{\bullet}$.

\begin{corollary}
The space $\Omega B\Cob^{\mathcal L}_\theta$ admits an infinite loopspace structure.
\end{corollary}

\begin{proof}
Note only that the identity component of a special $\Gamma$-space is automatically very special and thus an infinite loopspace by \cite[Proposition 1.4]{S 74}.
\end{proof}

It is evident straight from the definitions that the $\Gamma$-space structure just constructed is compatible with that of \cite{N 15}.


\section{Alternate Models for the Nerve} \label{section: A substitute for the cobordism category}
The category defined in Section \ref{section: lagrangian cobordism category} is difficult to analyze directly, as is $\mb C^\mathcal L_\bullet$. 
In order to prove Theorems \ref{theorem: inclusion of disk} - \ref{theorem: surgery on objects in degree n} we will need to work with a more flexible substitute $\mb X_\bullet^\mathcal L$ for the semi-simplicial nerve. In fact we will have three models $\mb C^\mathcal L_\bullet$, $\mb D^\mathcal L_\bullet$ and $\mb X^\mathcal L_\bullet$ by the end of this section; we named these by analogy with the objects from \cite{GRW 14}.  The middle model is forced on us by the definition of $X_\bullet$ from \cite[Definition 2.8]{GRW 14}: Its constituent manifolds no longer have preferred slices in which to place Lagrangians. The semi-simplicial space $\mb D_\bullet^\mathcal L$ therefore reformulates the Lagrangians as objects on the entirety of a long manifold rather than its slices. The relation between $\mb D_\bullet^\mathcal L$ and $\mb X_\bullet^\mathcal L$ will then be the same as that between $\mb D_\bullet$ and $\mb X_\bullet$ in \cite{GRW 14}.

\subsection{Models with spread-out Lagrangians}
Recall the semi-simplicial space $\mb{C}_{\bullet}$ from Definition \ref{defn: convenient model for nerve} and $\mb{C}^{\mathcal{L}}_{\bullet}$ from Definition \ref{defn: convenient model for nerve L}. 

\begin{defn} \label{defn: basic lagrangian cob cat}
For $p \in \Z_{\geq 0}$, $\mb{D}^{\mathcal{L}}_{p}$ is defined to be the space of tuples $(a, \varepsilon, (W, \ell), V)$ subject to the following conditions:
\begin{enumerate} 
	\item[(i)] The tuple $(a, \varepsilon, (W, \ell))$ is an element of $\mb{C}_{p}$ with the property that the pair $(W|_{[a_{i}, a_{i+1}]}, W|_{a_{i+1}})$ is $(n-1)$-connected for all $i = 0, \dots, p-1$.
	\item[(ii)] $V = (V_{0}, \dots, V_{p})$ is a $p$-tuple of subspaces 
	$$V_{i} \leq H^{\cpt}_{n+1}(W|_{(a_{0}-\varepsilon_{0}, a_{p}+\varepsilon_{p})}) \quad  \text{for $i = 0, \dots, p$,}$$ 
	subject to the following conditions:
	\begin{enumerate} 
		\item[(a)] For each $i$, the restriction $V_{i}|_{a_{i}} \; \leq \; H_{n}(W|_{a_{i}})$ is a Lagrangian subspace (recall from Section \ref{section: homological preliminaries} that $V_{i}|_{a_{i}}$ is the image of $V_{i}$ under the map $j_{!}: H^{\cpt}_{n+1}(W) \longrightarrow H_{n}(W|_{a_{i}})$).
		\item[(b)] Let $i \neq j$. Then the subspace $V_{i}|_{a_{j}} \leq H_{n}(W|_{a_{j}})$ is contained in the subspace $V_{j}|_{a_{j}}$.
	\end{enumerate}
\end{enumerate}
To topologize $\mb{D}^{\mathcal{L}}_{p}$ we use the following construction.  Choose once and for all a family of increasing diffeomorphisms 
\[\psi = \psi(a_{0}, \varepsilon_{0}, a_{p}, \varepsilon_{p}):  (0, 1) \stackrel{\cong} \longrightarrow (a_{0} - \varepsilon_{0}, a_{p} +\varepsilon_{p}),\]
varying smoothly in the data $(a_{0}, \varepsilon_{0}, a_{p}, \varepsilon_{p})$. We then let 
$$\bar{\psi} = \bar{\psi}(a_{0}, \varepsilon_{0}, a_{p}, \varepsilon_{p}): (0, 1)\times\R^{\infty} \longrightarrow (a_{0} - \varepsilon_{0}, a_{p} +\varepsilon_{p})\times\R^{\infty}$$
be the smooth family of diffeomorphisms given by the product, $\psi\times\Id_{\R^{\infty}}$. For each $i$, we define a map $\pi_{i}: \mb{D}^{\mathcal{L}}_{p} \longrightarrow \bPsi_{\theta}^{\Delta}((0, 1)\times\R^{\infty})$ by 
$$(a, \varepsilon, W, \ell, V) \; \mapsto \; \left(\bar{\psi}^{-1}(W|_{(a_{0}, \varepsilon_{0}, a_{p}, \varepsilon_{p})}), \; \; \ell|_{(a_{0}, \varepsilon_{0}, a_{p}, \varepsilon_{p})}\circ D\bar{\psi}, \; \; \bar{\psi}^{-1}(V_{i})\right).$$
Using these maps we obtain an embedding $\mb{D}^{\mathcal{L}}_{p} \hookrightarrow \mb{C}_{p}\times\bPsi_{\theta}^{\Delta}((0, 1)\times\R^{\infty})^{p+1}	$ defined by the formula
$$(a, \varepsilon, W, \ell, V) \; \; \mapsto \; \; \left((a, \varepsilon, W, \ell),  \; \; \pi_{0}(a, \varepsilon, W, \ell, V),\; \dots, \; \pi_{p}(a, \varepsilon, W, \ell, V)\right).$$
By this embedding we topologize $\mb{D}^{\mathcal{L}}_{p}$ as a subspace of $\mb{C}_{p}\times\bPsi_{\theta}^{\Delta}((0, 1)\times\R^{\infty})^{p+1}$. For $0 < i < p$, the face maps $d_{i}: \mb{D}^{\mathcal{L}}_{p} \longrightarrow \mb{D}^{\mathcal{L}}_{p-1}$ are defined by 
$$d_{i}(a, \varepsilon, W, \ell, V) \; = \; (a(i), \varepsilon(i), W, \ell, V(i))$$
where $a(i)$, $\varepsilon(i)$, and $V(i)$ are the $(p-1)$-tuples obtained by removing the $i$-th entry from the $p$-tuples $a$, $\varepsilon$, and $V$ respectively. For the face maps $d_{0}, d_{p}: \mb{D}^{\mathcal{L}}_{p} \longrightarrow \mb{D}^{\mathcal{L}}_{p-1}$, a small change is needed in the definition. The map $d_{0}$ is defined by 
$$d_{0}(a, \varepsilon, W, \ell, V) = \left(a(0), \; \varepsilon(0), \; W, \; \ell, \; V(0)|_{(a_{1}-\varepsilon_{1}, a_{p}+\varepsilon_{p})}\right).$$
and $d_{p}$ is defined by 
$$d_{p}(a, \varepsilon, W, \ell, V) = \left(a(p), \; \varepsilon(p), \; W, \; \ell, \; V(p)|_{(a_{0}-\varepsilon_{0}, a_{p-1}+\varepsilon_{p-1})}\right).$$
These face maps are continuous as a consequence of Proposition \ref{conti} and the assignment $[p] \mapsto \mb{D}^{\mathcal{L}}_{p}$ makes $\mb{D}^{\mathcal{L}}_{\bullet}$ into a semi-simplicial space.
\end{defn}

Since any two families of diffeomorphisms as in (\ref{equation: parametrization}) are isotopic the topology on $\mb{D}^{\mathcal{L}}_{p}$ is independent of the choice of $\psi$. The reason for having distinct subspaces $V_i$ (instead of a single one that restricts to a Lagrangian for every slice) is technical in nature: We do not know how to show that $|\mb C^\mathcal L_\bullet| \simeq |\mb D^\mathcal L_\bullet|$ for that variant of the definition.

We filter $\mb{D}^{\mathcal{L}}_{\bullet}$ by a sequence of sub-semi-simplicial spaces 
\begin{equation} \label{equation: sequence of D subspaces}
\mb{D}^{\mathcal{L}, n}_{\bullet} \subset \cdots \subset \mb{D}^{\mathcal{L}, -1}_{\bullet}  \subset \mb{D}^{\mathcal{L}, D}_{\bullet} \subset \mb{D}^{\mathcal{L}}_{\bullet},
\end{equation}
defined analogously to (\ref{equation: filtration of c-spaces}).

In order to compare $\mb{D}^{\mathcal{L}}_{\bullet}$ to $\mb{C}^{\mathcal{L}}_{\bullet}$ we need the following proposition. 

\begin{proposition} \label{proposition: d to c map}
Let $p \in \Z_{\geq 0}$. For any $(a, \varepsilon, (W, \ell), V) \in \mb{D}^{\mathcal{L}}_{p}$ the associated tuple
$$(a, \varepsilon, (W, \ell), V_{0}|_{a_{0}}, \dots, V_{p}|_{a_{p}})$$
is an element of $\mb{C}^{\mathcal{L}}_{p}$. Thus the correspondence
$$(a, \varepsilon, (W, \ell), V) \; \mapsto \; (a, \varepsilon, (W, \ell), V_{0}|_{a_{0}}, \dots, V_{p}|_{a_{p}})$$
yields a well defined semi-simplicial map $\mathcal{F} \colon \mb{D}^{\mathcal{L}}_{\bullet} \longrightarrow \mb{C}^{\mathcal{L}}_{\bullet}$.
\end{proposition}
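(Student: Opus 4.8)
The plan is to associate to each $(a,\varepsilon,(W,\ell),V)\in\mb{D}^{\mathcal L}_p$ the tuple with $L_i:=V_i|_{a_i}\le H_n(W|_{a_i})$, verify that it lies in $\mb{C}^{\mathcal L}_p$, and then check that this assignment is continuous and commutes with face maps. Conditions (a) and (b) in the definition of $\mb{C}^{\mathcal L}_p$ are immediate: (a) is literally part (i) of Definition \ref{defn: basic lagrangian cob cat}, and (b) is part (ii)(a) there (it is the same condition, just on $L_i=V_i|_{a_i}$). So all of the content lies in condition (c), i.e.\ that $\iota^{\text{in}}_i(L_i)=\iota^{\text{out}}_{i+1}(L_{i+1})$ inside $H_n(W|_{[a_i,a_{i+1}]})$ for each relevant $i$.

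The main step is a ``slab lemma''. Fix $i$, set $W^{\circ}:=W|_{(a_0-\varepsilon_0,\,a_p+\varepsilon_p)}$ (so that $V_i\le H^{\cpt}_{n+1}(W^{\circ})$), and abbreviate $S:=W|_{(a_i-\varepsilon_i,\,a_{i+1}+\varepsilon_{i+1})}$ and $Z:=W|_{[a_i,a_{i+1}]}$. Since $(W,\ell)$ is cylindrical over $(a_i-\varepsilon_i,a_i+\varepsilon_i)$ and over $(a_{i+1}-\varepsilon_{i+1},a_{i+1}+\varepsilon_{i+1})$ (this is built into $\mb{C}_p$), the open manifold $S$ has cylindrical ends with $Z$ as a codimension-$0$ core and $\partial Z=W|_{a_i}\sqcup W|_{a_{i+1}}$; hence (\ref{equation: leftschetz duality iso}) identifies $H^{\cpt}_{n+1}(S)\cong H_{n+1}(Z,\partial Z)$. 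Because the shriek restriction onto $W|_{a_i}$ of (\ref{equation: shreik homomorphism 2}) depends only on a tubular neighbourhood of $W|_{a_i}$, which may be taken inside $S$, it factors through the open restriction $H^{\cpt}_{n+1}(W^{\circ})\to H^{\cpt}_{n+1}(S)$; thus $v|_{a_i}=(v|_S)|_{a_i}$ for every $v\in H^{\cpt}_{n+1}(W^{\circ})$, and likewise at $a_{i+1}$. The slab lemma then says that under the identification above, $v|_{a_i}$ and $v|_{a_{i+1}}$ correspond, up to sign, to the two components of the connecting homomorphism $\partial\colon H_{n+1}(Z,\partial Z)\to H_n(\partial Z)=H_n(W|_{a_i})\oplus H_n(W|_{a_{i+1}})$. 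Granting this, exactness of $H_{n+1}(Z,\partial Z)\xrightarrow{\partial}H_n(\partial Z)\to H_n(Z)$ — whose second arrow restricts to $\iota^{\text{in}}_i$ and $\iota^{\text{out}}_{i+1}$ on the two summands of $H_n(\partial Z)$ — yields, applying everything to the whole subgroup $V_i$ and noting that signs are irrelevant for subgroups,
\[
\iota^{\text{in}}_i(V_i|_{a_i})\;=\;\iota^{\text{out}}_{i+1}(V_i|_{a_{i+1}})\qquad\text{in }\;H_n(W|_{[a_i,a_{i+1}]}).
\]
I would prove the slab lemma by a direct naturality computation: after isotoping $W|_{a_i}$ slightly into $\Int Z$, the class $v|_{a_i}$ is cut out on this separating hypersurface by a relative cycle representing $v|_S\in H_{n+1}(Z,\partial Z)$, and a short argument in the product collar identifies it up to sign with the $W|_{a_i}$-component of $\partial(v|_S)$; equivalently, this is the compatibility of (\ref{equation: leftschetz duality iso}) with the restriction maps of Section \ref{section: homological preliminaries}, in the spirit of the remark following~(\ref{equation: shreik homomorphism 2}). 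This compatibility statement is the one genuinely technical point of the proof; everything else is formal.

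Given the slab lemma, condition (c) follows by a sandwich argument using part (ii)(b) of Definition \ref{defn: basic lagrangian cob cat}. Applying the displayed identity once to $V_i$ and once to $V_{i+1}$, and inserting the inclusions $V_i|_{a_{i+1}}\le V_{i+1}|_{a_{i+1}}$ and $V_{i+1}|_{a_i}\le V_i|_{a_i}$ provided by (ii)(b), one obtains
\[
\iota^{\text{in}}_i(L_i)=\iota^{\text{out}}_{i+1}(V_i|_{a_{i+1}})\subseteq\iota^{\text{out}}_{i+1}(V_{i+1}|_{a_{i+1}})=\iota^{\text{in}}_i(V_{i+1}|_{a_i})\subseteq\iota^{\text{in}}_i(V_i|_{a_i})=\iota^{\text{in}}_i(L_i),
\]
so every containment is an equality; since $\iota^{\text{out}}_{i+1}(L_{i+1})=\iota^{\text{out}}_{i+1}(V_{i+1}|_{a_{i+1}})$ occurs in the middle of this chain, we conclude $\iota^{\text{in}}_i(L_i)=\iota^{\text{out}}_{i+1}(L_{i+1})$, and the tuple lies in $\mb{C}^{\mathcal L}_p$.

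It remains to promote this to a semi-simplicial map. Continuity follows from the way $\mb{D}^{\mathcal L}_p$ and $\mb{C}^{\mathcal L}_p$ are topologized together with continuity of the restriction maps, exactly as in Proposition \ref{conti}. Compatibility with faces is a formal check: for $0<i<p$, deleting the $i$-th entry of $V$ commutes with forming the tuple $(V_j|_{a_j})_j$ and with deleting $L_i$; for $d_0$ (and symmetrically $d_p$) one uses once more that $V_j|_{a_j}$ depends only on a neighbourhood of $W|_{a_j}$, which for $j\ge 1$ is contained in $W|_{(a_1-\varepsilon_1,\,a_p+\varepsilon_p)}$, so that restricting $V_j$ first to that open subset and then to $a_j$ recovers $V_j|_{a_j}$. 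This produces the desired semi-simplicial map $\mb{D}^{\mathcal L}_\bullet\to\mb{C}^{\mathcal L}_\bullet$.
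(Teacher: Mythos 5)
Your argument is correct and uses the same key ingredient as the paper's proof: the compatibility between the shriek restrictions $v\mapsto v|_{a_\nu}$ and the connecting map $\partial$ of the pair $(W|_{[a_i,a_{i+1}]},\,W|_{a_i}\sqcup W|_{a_{i+1}})$, i.e.\ the commutative diagram (\ref{equation: shreik boundary diagram 1}), together with exactness of the pair sequence and condition (ii)(b) of Definition \ref{defn: basic lagrangian cob cat}. The only organizational difference is that you first establish the equality $\iota^{\text{in}}_i(V_i|_{a_i})=\iota^{\text{out}}_{i+1}(V_i|_{a_{i+1}})$ with the \emph{same} subgroup $V_i$ on both sides (which follows from the slab lemma and exactness alone), and then invoke (ii)(b) twice in a sandwich to replace $V_i$ by $V_{i+1}$; the paper instead argues elementwise, showing each inclusion directly and then invoking symmetry. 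These are logically equivalent, and your phrasing arguably makes it clearer which step requires (ii)(b). You also address continuity and compatibility with face maps, which the paper leaves implicit; that discussion is sound, and in particular you are right that $V_j|_{a_j}$ for $j\ge 1$ depends only on a neighbourhood of $W|_{a_j}$ inside $W|_{(a_1-\varepsilon_1,a_p+\varepsilon_p)}$, which is exactly what makes $d_0$ (and symmetrically $d_p$) commute with the assignment.
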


\begin{proof}
Let $(a, \varepsilon, (W, \ell), V) \in \mb{D}^{\mathcal{L}}_{p}$ with $V = (V_{0}, \dots, V_{p})$. We need to show that for all $0\leq i < p$
$$\iota^{\text{in}}(V_{i}|_{a_{i}}) \; = \; \iota^{\text{out}}(V_{i+1}|_{a_{i+1}}),	$$
where 
$$\iota^{\text{in}}: H_{n}(W|_{a_{i}}) \longrightarrow H_{n}(W|_{[a_{i}, a_{i+1}]}) \quad \text{and} \quad \iota^{\text{out}}: H_{n}(W|_{a_{i+1}}) \longrightarrow H_{n}(W|_{[a_{i}, a_{i+1}]})$$
are the maps induced by inclusion. Let $x \in V_{i}|_{a_{i}}$ and choose $v \in V_{i}$ such that $v|_{a_{i}} = x$. By Definition \ref{defn: basic lagrangian cob cat} (condition (ii), part (b)), we have $v|_{a_{i+1}} \in V_{i+1}|_{a_{i+1}}$. 	Let 
$$\bar{v} \in H_{n+1}(W|_{[a_{i}, a_{i+1}]}, W|_{a_{i}}\sqcup W|_{a_{i+1}})$$ 
denote the image of $v$ under
$$\xymatrix{H^{\cpt}_{n+1}(W) \ar[rr]^-{-|_{[a_i,a_{i+1}]}} &&  H_{n+1}(W|_{[a_{i}, a_{i+1}]}, W|_{a_{i}}\sqcup W|_{a_{i+1}})}$$
One readily checks from the definition that the diagram
\[\xymatrix{& H^{\cpt}_{n+1}(W) \ar[ld]_{-|_{[a_i,a_{i+1}]}} \ar[rd]^{-|_{a_{i+\nu}}} & \\
H_{n+1}(W|_{[a_{i}, a_{i+1}]}, \; W|_{a_{i}}\sqcup W|_{a_{i+1}}) \ar[r]^-{\partial} & H_{n}(W|_{a_{i}}\sqcup W|_{a_{i+1}}) \ar[r]^-{\text{pr}_\nu} & H_{n}(W|_{a_{i+\nu}})}\]
commutes up to the sign $(-1)^{\nu+1}$ for $\nu = 0, 1$. It follows that 
\[\partial(\bar{v}) =  v|_{a_{i+1}} - v|_{a_{i}}.\]
By exactness of 
$$\xymatrix{H_{n+1}(W|_{[a_{i}, a_{i+1}]}, W|_{a_{i}}\sqcup W|_{a_{i+1}}) \ar[r]^-{\partial} & H_{n}(W|_{a_{i}}\sqcup W|_{a_{i+1}}) \ar[rrr]^-{\iota^{\text{in}} + \iota^{\text{out}}} &&& H_{n}(W|_{[a_{i}, a_{i+1}]}),}$$
we then find
$$\iota^{\text{in}}(x) = \iota^{\text{in}}(v|_{a_{i}}) = \iota^{\text{out}}(v|_{a_{i+1}}),$$
so $\iota^{\text{in}}(x) \in \iota^{\text{out}}(V_{i+1}|_{a_{i+1}})$. Thus
$$\iota^{\text{in}}(V_{i}|_{a_{i}}) \leq \iota^{\text{out}}(V_{i+1}|_{a_{i+1}}).$$
and exchanging indices shows that $\iota^{\text{in}}(V_{i}|_{a_{i}}) \geq \iota^{\text{out}}(V_{i+1}|_{a_{i+1}}).$ 
\end{proof}

The following theorem is the main result of this section, its proof occupies the entire next section.

\begin{theorem} \label{theorem: replacement of nerve}
The semi-simplicial map $\mathcal F \colon \mb{D}^{\mathcal{L}}_{\bullet} \longrightarrow \mb{C}^{\mathcal{L}}_{\bullet}$ induces the weak homotopy equivalences 
$$|\mb{D}^{\mathcal{L}}_{\bullet}| \simeq |\mb{C}^{\mathcal{L}}_{\bullet}|, \quad |\mb{D}^{\mathcal{L}, D}_{\bullet}| \simeq |\mb{C}^{\mathcal{L}, D}_{\bullet}|,  \quad |\mb{D}^{\mathcal{L}, l}_{\bullet}| \simeq |\mb{C}^{\mathcal{L}, l}_{\bullet}|,$$
for all $l \in \Z_{\geq 0}$.
\end{theorem}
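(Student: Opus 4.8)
The plan is to show that the semi-simplicial map $\mathcal{F}$ is a weak homotopy equivalence in each simplicial degree $p$; the three assertions about geometric realizations then follow at once, using the standard fact that geometric realization of semi-simplicial spaces sends levelwise weak homotopy equivalences to weak homotopy equivalences, and that all three maps in the statement are restrictions of the single map $\mathcal{F}$. Indeed, the subspaces $\mb{D}^{\mathcal{L}, D}_\bullet, \mb{D}^{\mathcal{L}, l}_\bullet$ and $\mb{C}^{\mathcal{L}, D}_\bullet, \mb{C}^{\mathcal{L}, l}_\bullet$ are cut out purely by conditions on the underlying $\theta$-manifolds and their slices --- containing the fixed disk $D$, resp.\ $l$-connectivity of the objects --- and not by any condition on the homological data; since $\mathcal{F}$ commutes with the forgetful maps down to $\mb{C}_p$, it carries one such subspace onto the other, so the argument below applies verbatim to each.

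Fix $p$ and let $\widetilde{\mb{C}}_p \subseteq \mb{C}_p$ denote the subspace of chains $(a, \varepsilon, (W, \ell))$ for which every pair $(W|_{[a_i,a_{i+1}]}, W|_{a_{i+1}})$ is $(n-1)$-connected. Both $\mb{D}^{\mathcal{L}}_p$ and $\mb{C}^{\mathcal{L}}_p$ carry forgetful maps to $\widetilde{\mb{C}}_p$, and $\mathcal{F}_p$ is a map over $\widetilde{\mb{C}}_p$. Over each path component of $\widetilde{\mb{C}}_p$ the groups $H^{\cpt}_{n+1}(W|_{(a_0-\varepsilon_0,\,a_p+\varepsilon_p)})$ and $H_n(W|_{a_i})$, together with the intersection and self-intersection forms and the homomorphisms $j_!$, $\iota^{\mathrm{in}}$, $\iota^{\mathrm{out}}$, form a local system; hence the set of valid tuples $V$ (conditions (ii)(a),(b) of Definition \ref{defn: basic lagrangian cob cat}) and the set of valid tuples $(L_0,\dots,L_p)$ (conditions (b),(c) of Definition \ref{defn: convenient model for nerve L}) are \emph{locally constant}, exhibiting both forgetful maps as bundles of discrete sets over each component. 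Using Proposition \ref{proposition: continuity of the family} --- together with the explicit three-step topology of Construction \ref{construction: construction of topology} and Proposition \ref{conti} --- these bundles admit unique path- and homotopy-lifting. It therefore suffices to prove that $\mathcal{F}_p$ induces a bijection between the two discrete sets over each path component of $\widetilde{\mb{C}}_p$; equivalently, that for a fixed cobordism chain every compatible tuple of slice-Lagrangians $(L_0,\dots,L_p)$ lifts to a unique tuple $V$ with $V_i|_{a_i} = L_i$.

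The fibrewise statement is the surgery-theoretic heart of the argument. For uniqueness one shows that a subspace $V_i \leq H^{\cpt}_{n+1}(W|_{(a_0-\varepsilon_0,\,a_p+\varepsilon_p)})$ satisfying (ii)(a),(b) is determined by the collection of its restrictions to all the slices $W|_{a_0},\dots,W|_{a_p}$: this uses the $(n-1)$-connectivity of the pairs $(W|_{[a_i,a_{i+1}]}, W|_{a_{i+1}})$ and Lefschetz duality, much as in the proof of Proposition \ref{proposition: d to c map}, so that $V_i$ is forced to equal $\{v : v|_{a_j} \in L_j \text{ for all } j\}$. For existence one must check that this subspace actually restricts \emph{onto} $L_i$ at every slice $a_i$; here one represents classes of $L_i$ by embedded spheres as in Theorem \ref{corollary: lagrangian is a surgery solution} and uses the compatibility $\iota^{\mathrm{in}}(L_i) = \iota^{\mathrm{out}}(L_{i+1})$ to glue these representatives, across each $W|_{[a_i,a_{i+1}]}$, into a compactly supported $(n+1)$-cycle with the prescribed restrictions --- the connectivity hypotheses being exactly what makes the requisite cycles available. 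Naturality of this construction with respect to the local system then yields the bijection on fibres.

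The step I expect to be the main obstacle is precisely this fibrewise analysis, and within it the passage from ``bijection on underlying sets of valid data'' to ``weak homotopy equivalence of the topologized total spaces''. The difficulty is that $\mb{D}^{\mathcal{L}}_p$ is topologized through the compactly supported homology of the \emph{total} cobordism while $\mb{C}^{\mathcal{L}}_p$ is topologized through the homology of the slices, so reconciling the two requires descending to $K$-topologies for compact $K$ and re-running continuity estimates in the style of Proposition \ref{proposition: continuity of the family}. An acceptable alternative to the bundle-theoretic step would be to verify directly that $\mathcal{F}_p$ is a microfibration with weakly contractible --- in fact one-point --- fibres and invoke the standard lemma that such a map is a weak homotopy equivalence.
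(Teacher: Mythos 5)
Your plan hinges on the claim that for a fixed cobordism chain the lift of a compatible tuple of slice-Lagrangians $(L_0,\dots,L_p)$ to a tuple $V = (V_0,\dots,V_p)$ with $V_i|_{a_i}=L_i$ is \emph{unique}, so that $\mathcal{F}_p$ becomes a fibration with one-point fibres. This is where the argument breaks. The fibre of $\mathcal{F}_p$ over a point of $\mb{C}^{\mathcal{L}}_p$ is indeed a discrete set (subgroups of the fixed group $H^{\cpt}_{n+1}(W|_{(a_0-\varepsilon_0,\,a_p+\varepsilon_p)})$), but it need not be a singleton. Nothing in Definition~\ref{defn: basic lagrangian cob cat} forces $V_i$ to be the \emph{maximal} subgroup $\{\,v : v|_{a_j}\in L_j\ \text{for all}\ j\,\}$; any smaller subgroup $V_i'\le V_i$ with the same restriction to the slice $a_i$ again satisfies (ii)(a), and (ii)(b) is inherited since $V_i'|_{a_j}\le V_i|_{a_j}\le V_j|_{a_j}$. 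In particular, if the simultaneous restriction map $v\mapsto (v|_{a_0},\dots,v|_{a_p})$ on $H^{\cpt}_{n+1}(W|_{(a_0-\varepsilon_0,\,a_p+\varepsilon_p)})$ has nontrivial kernel, the maximal choice contains it and one gets many valid $V_i$'s differing by adding or removing kernel elements. A discrete set with more than one point is not weakly contractible, so the microfibration-with-contractible-fibres route (and, equivalently, the levelwise weak equivalence claim) cannot close.

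The non-uniqueness of the lift is precisely the difficulty the paper's argument is built to absorb. The paper does not prove a levelwise equivalence; it introduces the augmented bi-semi-simplicial space $\mb{C}^{\mathcal{L}}_{\bullet,\bullet}\to \mb{C}^{\mathcal{L}}_{\bullet,-1}$ whose $q$-simplices in the second coordinate record a $(q+1)$-tuple of valid lifts, and applies the topological flag complex criterion of Theorem~\ref{theorem: flag complex equivalence}. There the required contractibility is that of the \emph{flag complex} $\mb{Z}_\bullet(x)$, not of the fibre $\mb{Z}_0(x)$ itself: any finite collection of lifts is a simplex, so $\mb{Z}_\bullet(x)$ realizes to a contractible space once it is nonempty. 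Your ideas for nonemptiness (gluing $(n+1)$-cycles across the sub-cobordisms using $\iota^{\mathrm{in}}(L_i)=\iota^{\mathrm{out}}(L_{i+1})$) are in fact close to how the paper verifies condition~(ii) in Lemma~\ref{lemma: augmentation equivalence}, so that portion is salvageable; what is missing is the mechanism to handle the genuine multi-valuedness of the lift, which in the paper is the bi-semi-simplicial resolution together with the retraction argument of Lemma~\ref{lemma: inclusion of zero simplices}. Without some replacement for that mechanism, the proposal does not prove the theorem.
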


\subsection{Proof of Theorem \ref{theorem: replacement of nerve}} \label{subsection: proof of d to c equivalence}
We will only explicitly prove the weak homotopy equivalence $|\mb{D}^{\mathcal{L}}_{\bullet}| \simeq |\mb{C}^{\mathcal{L}}_{\bullet}|$. The other weak homotopy equivalences asserted in the theorem are established by repeating the exact same argument, which is largely formal. The proof makes use of the simplicial technique introduced in \cite[Section 6.2]{GRW 14} and largely follows \cite[Section 6.3 \& 6.4]{GRW 14}. The first step is to define an augmented bi-semi-simplicial space $\mb{C}^{\mathcal{L}}_{\bullet, \bullet} \longrightarrow \mb{C}^{\mathcal{L}}_{\bullet, -1}$, with $\mb{C}^{\mathcal{L}}_{\bullet, -1} = \mb{C}^{\mathcal{L}}_{\bullet}$.

\begin{defn} \label{defn: bi-simplicial resolution 1}
Let $p \in \Z_{\geq 0}$ and let $x = (a, \varepsilon, (W, \ell), L) \in \mb{C}^{\mathcal{L}}_{p}$. For each $q \in \Z_{\geq-1}$, we define ${\mb{Z}}_{q}(x)$ to be the set of tuples $(V^{0}, \dots, V^{q})$ subject to the following conditions:
\begin{enumerate}
	\item[(i)] Each $(a, \varepsilon, (W, \ell), V^{j})$ is an element of  $\mb{D}^{\mathcal{L}}_{p}$. In other words for each $j$, $V^{j} = (V^{j}_{0}, \dots, V^{j}_{p})$ is a $(p+1)$-tuple of subspaces of $H^{\cpt}_{n+1}(W|_{(a_{0}-\varepsilon_{0}, a_{p}+\varepsilon_{p})})$, subject to the conditions from Definition \ref{defn: basic lagrangian cob cat}.
	\item[(ii)] The equality 
	$$V^{j}_{i}|_{a_{i}} \; = \; L_{i}$$ 
	holds for all $j = 0, \dots, q$ and $i = 0, \dots, p$. In other words, $\mathcal{F}(a, \varepsilon, W, \ell, V^{j}) \; = \; (a, \varepsilon, W, \ell, L)$ for all $j = 0, \dots, q$, where recall that $\mathcal{F}$ is the map from (\ref{proposition: d to c map}).
\end{enumerate}
For $p, q \in \Z_{\geq-1}$, the space ${\mb{C}}^{\mathcal{L}}_{p, q}$ is defined by 
\[{\mb{C}}^{\mathcal{L}}_{p, q} = \{(x, y) \; | \; x \in \mb{C}^{\mathcal{L}}_{p} \quad \text{and} \quad y \in {\mb{Z}}_{q}(x)\}\]
and topologized as a subspace of $(\mb D_p^{\mathcal L})^{q+1}$. The assignment $[p, q] \mapsto {\mb{C}}^{\mathcal{L}}_{p, q}$ defines a bi-semi-simplicial space ${\mb{C}}^{\mathcal{L}}_{\bullet, \bullet}$. The forgetful maps 
$${\mb{C}}^{\mathcal{L}}_{p, q} \longrightarrow \mb{C}^{\mathcal{L}}_{p}, \quad (x, y) \mapsto x,$$ 
define an augmented bi-semi-simplicial space ${\mb{C}}^{\mathcal{L}}_{\bullet, \bullet} \longrightarrow \mb{C}^{\mathcal{L}}_{\bullet, -1} = \mb{C}^{\mathcal{L}}_{\bullet}$. 
\end{defn}

\begin{Observation} \label{remark: contractibility fo Z}
By condition (i) in the above definition, it follows that ${\mb{Z}}_{q}(x) \cong [\mb{Z}_{0}(x)]^{q+1}$ for all $x$. It follows that the semi-simplicial set given by the correspondence $[q] \mapsto {\mb{Z}}_{q}(x)$ is contractible whenever it is non-empty.
\end{Observation}

Notice that the semi-simplicial space ${\mb{C}}^{\mathcal{L}}_{\bullet, 0}$ is nothing but $\mb{D}^{\mathcal{L}}_{\bullet}$. Under this identification the forgetful map ${\mb{C}}^{\mathcal{L}}_{\bullet, 0} \longrightarrow {\mb{C}}^{\mathcal{L}}_{\bullet}$ coincides with the map $\mathcal{F}: \mb{D}^{\mathcal{L}}_{\bullet} \longrightarrow \mb{C}^{\mathcal{L}}_{\bullet}.$ Inclusion of zero-simplices yields an embedding $|\mb{D}^{\mathcal{L}}_{\bullet}| = |{\mb{C}}^{\mathcal{L}}_{\bullet, 0}|  \hookrightarrow |{\mb{C}}^{\mathcal{L}}_{\bullet, \bullet}|$.
To prove Theorem \ref{theorem: replacement of nerve} it will suffice to prove that the maps 
$$|{\mb{C}}^{\mathcal{L}}_{\bullet, 0}| \hookrightarrow |{\mb{C}}^{\mathcal{L}}_{\bullet, \bullet}| \longrightarrow |{\mb{C}}^{\mathcal{L}}_{\bullet}|
$$
are both weak homotopy equivalences, where the first map is given by inclusion of zero-simplices and the second is induced by the augmentation.
We break this up into two steps, Lemma \ref{lemma: augmentation equivalence} and Lemma \ref{lemma: inclusion of zero simplices}. 

\begin{lemma} \label{lemma: augmentation equivalence}
The map $|{\mb{C}}^{\mathcal{L}}_{\bullet, \bullet}| \longrightarrow |{\mb{C}}^{\mathcal{L}}_{\bullet}|$ induced by the augmentation is a weak homotopy equivalence. 
\end{lemma}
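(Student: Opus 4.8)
The plan is to prove the lemma one simplicial degree at a time. First I would fix $p\ge 0$ and show that the augmented semi-simplicial space $\mb{C}^{\mathcal{L}}_{p,\bullet} \to \mb{C}^{\mathcal{L}}_{p,-1} = \mb{C}^{\mathcal{L}}_p$ realises to a weak homotopy equivalence $|\mb{C}^{\mathcal{L}}_{p,\bullet}| \simeq \mb{C}^{\mathcal{L}}_p$. Since these augmentations assemble into a map of good semi-simplicial spaces in the $p$-direction (as is standard for the semi-simplicial spaces arising here), geometric realisation in the $p$-direction then upgrades the levelwise equivalences to the desired equivalence $|\mb{C}^{\mathcal{L}}_{\bullet,\bullet}| \simeq |\mb{C}^{\mathcal{L}}_{\bullet,-1}| = |\mb{C}^{\mathcal{L}}_\bullet|$. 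So the whole content lies in the fixed-$p$ statement.

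For fixed $p$ I would invoke Theorem \ref{theorem: flag complex equivalence}. By Remark \ref{remark: contractibility fo Z} the forgetful maps identify $\mb{C}^{\mathcal{L}}_{p,q}$ with the $(q+1)$-fold fibred product of $\mb{C}^{\mathcal{L}}_{p,0} = \mb{D}^{\mathcal{L}}_p$ over $\mb{C}^{\mathcal{L}}_p$, and since no condition relates distinct entries $V^i,V^j$ of a tuple of $\mb{Z}_q(x)$, the orthogonality relation of the resulting augmented topological flag complex (in the sense of Definition \ref{defn: topolgical flag complexes}) is the trivial one. Hypothesis (iii) of Theorem \ref{theorem: flag complex equivalence} is then automatic — any member of a non-empty finite subset of a fibre is ``orthogonal'' to all of them — so it remains only to check surjectivity and the local-lifting property of $\varepsilon\colon \mb{D}^{\mathcal{L}}_p \to \mb{C}^{\mathcal{L}}_p$ (which is the map $\mathcal{F}$ of (\ref{equation: D to C map}) in degree $p$).

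For surjectivity, given $x = (a,\varepsilon,(W,\ell),L) \in \mb{C}^{\mathcal{L}}_p$ with $L=(L_0,\dots,L_p)$, the idea is to lift each $L_i$ by a class concentrated near the slice $a_i$. Concretely, for $\delta>0$ small the closed slabs $C_i := W|_{[a_i-\delta,\,a_i+\delta]} = [a_i-\delta,a_i+\delta]\times W|_{a_i}$ are pairwise disjoint and interior to $W_\circ := W|_{(a_0-\varepsilon_0,a_p+\varepsilon_p)}$; via the identification $H^{\cpt}_{n+1}(C_i)\cong H_{n+1}(C_i,\partial C_i)\cong H_n(W|_{a_i})$ and the push-forward $\beta_i\colon H^{\cpt}_{n+1}(C_i)\to H^{\cpt}_{n+1}(W_\circ)$ induced by the inclusion of the closed set $C_i$ (as in Construction \ref{construction: one parameter family of subspaces}), I set $V_i := \beta_i(\widetilde{L_i})$ with $\widetilde{L_i}$ the lift of $L_i$. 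Unwinding the restriction maps of Section \ref{section: homological preliminaries} — using $C_i$, respectively a tubular neighbourhood of $W|_{a_j}$ disjoint from $C_i$, as the relevant tube — gives $V_i|_{a_i}=L_i$ and $V_i|_{a_j}=0$ for $j\ne i$. Hence $(a,\varepsilon,(W,\ell),(V_0,\dots,V_p))$ lies in $\mb{D}^{\mathcal{L}}_p$: condition (i) of Definition \ref{defn: basic lagrangian cob cat} is inherited from $x\in\mb{C}^{\mathcal{L}}_p$, condition (ii)(a) holds since the $L_i$ are Lagrangian, and (ii)(b) holds because the off-diagonal restrictions vanish; moreover $\mathcal{F}$ sends it to $x$. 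For the local-lifting property, given a map from a disk into $\mb{C}^{\mathcal{L}}_p$ passing through $x$, I would transport this construction over a neighbourhood of the relevant point: on a small enough neighbourhood the underlying $\theta$-manifolds move through a family locally generated by vector fields (the slabs $C_i$ moving with them), so Construction \ref{construction: one parameter family of subspaces} together with Proposition \ref{proposition: continuity of the family} renders the moving push-forwards a continuous path in $\mb{D}^{\mathcal{L}}_p$ covering the disk near $x$, continuity being read off from the definition of the topology on $\mb{D}^{\mathcal{L}}_p$ as an embedded subspace of $\mb{C}_p\times\bPsi^{\Delta}_\theta((0,1)\times\R^\infty)^{p+1}$. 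Applying Theorem \ref{theorem: flag complex equivalence} then yields $|\mb{C}^{\mathcal{L}}_{p,\bullet}|\simeq\mb{C}^{\mathcal{L}}_p$, and the first paragraph finishes the proof.

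The step I expect to be the main obstacle is not any homological input — that is soft once the inverse-limit description of $H^{\cpt}_*$ and the ``concentration'' trick above are in hand — but the bookkeeping in the local-lifting step, namely verifying carefully that the family of push-forward subspaces defines a continuous path into $\mb{D}^{\mathcal{L}}_p$ with its rather involved topology; the reduction to fixed $p$ and the recognition of the flag-complex structure are purely formal.
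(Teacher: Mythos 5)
The formal skeleton of your argument matches the paper's: reduce to a fixed $p$, recognise $\mb{C}^{\mathcal{L}}_{p,\bullet}\to\mb{C}^{\mathcal{L}}_{p}$ as an augmented topological flag complex, note that condition (iii) of Theorem~\ref{theorem: flag complex equivalence} is trivial (Remark~\ref{remark: contractibility fo Z}), and then verify conditions~(i) (local lifts) and~(ii) (surjectivity) before realising in the $p$-direction. Condition~(ii) is where the real work is, and your ``concentration'' construction for it does not work.

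There are two problems, the second being fatal. First, the identification $H^{\cpt}_{n+1}(C_i)\cong H_{n+1}(C_i,\partial C_i)$ is not what Definition~\ref{defn: compactly supported homology} gives for the \emph{closed} slab $C_i=[a_i-\delta,a_i+\delta]\times W|_{a_i}$: that slab is compact, so $H^{\cpt}_{n+1}(C_i)=H_{n+1}(C_i)\cong H_{n+1}(W|_{a_i})$, a degree off from $L_i\le H_n(W|_{a_i})$. The identification you quote (see the remark following~(\ref{equation: shreik homomorphism 2})) applies to the \emph{open} interior of a compact codimension-$0$ submanifold; but the inclusion of the open slab into $W_\circ$ is not proper, so there is no pushforward $\beta_i$ on $H^{\cpt}_*$ for it, and the reference to Construction~\ref{construction: one parameter family of subspaces} (which pushes forward along a \emph{closed} subset) does not apply.

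Second, and independently of how one tries to build them, subgroups $V_i\le H^{\cpt}_{n+1}(W_\circ)$ with $V_i|_{a_i}=L_i$ and $V_i|_{a_j}=0$ for $j\neq i$ generally do not exist. This is exactly what the computation in the proof of Proposition~\ref{proposition: d to c map} rules out: for $v\in V_i$ one has $\partial(\bar v)=v|_{a_i}+v|_{a_{i+1}}$, and exactness of the pair $(W|_{[a_i,a_{i+1}]},W|_{a_i}\sqcup W|_{a_{i+1}})$ gives $\iota^{\mathrm{in}}(v|_{a_i})=-\iota^{\mathrm{out}}(v|_{a_{i+1}})$. If $V_i|_{a_{i+1}}=0$, this forces $\iota^{\mathrm{in}}(L_i)=0$, but the defining condition for membership in $\mb{C}^{\mathcal{L}}_p$ is only $\iota^{\mathrm{in}}(L_i)=\iota^{\mathrm{out}}(L_{i+1})$, which is typically nonzero. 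A concrete counterexample is a product cobordism $W|_{[a_0,a_1]}=[a_0,a_1]\times M$, where the boundary map is essentially a diagonal and so $V_0|_{a_0}\neq 0$ implies $V_0|_{a_1}\neq 0$. The paper's argument for surjectivity is necessarily more elaborate: starting from generators of $L_0$ it chooses compatible lifts $y_i\in L_1$, $z_i\in H_n(W|_{a_2})$, etc., and propagates them through the long exact sequences of the triples $(W|_{a_0}\sqcup W|_{a_2},\,W|_{a_0}\sqcup W|_{a_1}\sqcup W|_{a_2},\,W|_{[a_0,a_2]})$, producing $V^0$ satisfying $V^0|_{a_0}=L_0$ and $V^0|_{a_j}\le L_j$ (generally nonzero) — the compatibility $\iota^{\mathrm{in}}(L_i)=\iota^{\mathrm{out}}(L_{i+1})$ is precisely what makes each step possible. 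Your local-lifting discussion piggybacks on the same concentration construction and inherits the gap.
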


\begin{proof}
We will apply \cite[Theorem 6.2]{GRW 14} for each $p \in \Z_{\geq 0}$ to show that the induced maps $|{\mb{C}}^{\mathcal{L}}_{p, \bullet}| \longrightarrow {\mb{C}}^{\mathcal{L}}_{p}$ are weak homotopy equivalences for each $p \in \Z_{\geq 0}$. Geometrically realizing the first coordinate will then imply the lemma. We thus need to verify conditions (i), (ii), and (iii) from \cite[Theorem 6.2]{GRW 14} (it is clear that ${\mb{C}}^{\mathcal{L}}_{p, \bullet} \rightarrow {\mb{C}}^{\mathcal{L}}_{p}$ is an augmented topological flag complex as in \cite[Definition 6.1]{GRW 14}). 

Condition (i) is proven similarly to \cite[Proposition 6.10]{GRW 14} and so we omit the proof. Condition (iii) is trivial (see Observation \ref{remark: contractibility fo Z}). We proceed to verify condition (ii).

Let $x = (a, \varepsilon, (W, \ell), L) \in  \mb{C}^{\mathcal{L}}_{p}$ be with $L = (L_{0}, \dots, L_{p})$. We will need to show that $\mb{Z}_{0}(x)$ is non-empty. By the definition of $\mb{C}^{\mathcal{L}}_{p}$, we have 
\begin{equation} \label{equation: equal image condition}
\iota^{\text{in}}(L_{0}) \; = \; \iota^{\text{out}}(L_{1})
\end{equation}
as subspaces of $H_{n}(W|_{[a_{0}, a_{1}]})$, where $\iota^{\text{in}}$ and $\iota^{\text{out}}$ are the maps induced by the inclusions 
$$W|_{a_{0}} \hookrightarrow W|_{[a_{0}, a_{1}]} \hookleftarrow W|_{a_{1}}.$$
Let $x_{1}, \dots, x_{k} \in L_{0}$ be a set of generators. By (\ref{equation: equal image condition}), for each $i = 1, \dots, k$, we may choose $y_{i} \in L_{1}$ such that 
$$\iota^{\text{in}}(x_{i}) = \iota^{\text{out}}(y_{i}).$$
By exactness of  
$$\xymatrix{H_{n+1}(W|_{[a_{0}, a_{1}]}, \; W|_{a_{0}}\sqcup W|_{a_{1}}) \ar[r]^-{\partial} & H_{n}(W|_{a_{0}}\sqcup W|_{a_{1}}) \ar[r]^-{\iota^{\text{in}} + \iota^{\text{out}}} & H_{n}(W|_{[a_{0}, a_{1}]}),}$$
it follows that for each $i = 1, \dots, k$, there exists a class $w_{i} \in H_{n+1}(W|_{[a_{0}, a_{1}]}, \; W|_{a_{0}}\sqcup W|_{a_{1}})$ such that 
$$\partial w_{i} \; = \; x_{i} - y_{i}.$$ 
Since $y_i \in L_1$ for all $i$, we can similarly find classes $v_i \in H_{n+1}(W|_{[a_{1}, a_{2}]}, \; W|_{a_{1}}\sqcup W|_{a_{2}})$, with $\partial_{\text{in}}(v_i) = y_i$ and $\partial_{\text{out}}(v_i) = -z_i$ for some classes $z_i \in H_n(W|_{a_2})$ with $\iota_{in}(y_i) = \iota_{out}(z_i)$. Now consider the element $j_0(w_i) + j_1(v_i) \in H_{n+1}(W|_{[a_{0}, a_{2}]}, \; W|_{a_0} \sqcup W|_{a_{1}}\sqcup W|_{a_{2}})$ where 
$$j_\nu: H_{n+1}((W|_{[a_{\nu}, a_{\nu+1}]}, \; W|_{a_{\nu}}\sqcup W|_{a_{\nu+1}}) \longrightarrow H_{n+1}(W|_{[a_{0}, a_{2}]}, \; W|_{a_0} \sqcup W|_{a_{1}}\sqcup W|_{a_{2}})$$
is the obvious inclusion for $\nu = 0,1$. In the long exact sequence 
$$\xymatrix{ \cdots \ar[r] & H_{n+1}(W|_{[a_{0}, a_{2}]}, W|_{a_{0}}\sqcup W|_{a_{2}}) \ar[d] & \\
& H_{n+1}(W|_{[a_{0}, a_{2}]}, W|_{a_0} \sqcup W|_{a_{1}}\sqcup W|_{a_{2}}) \ar[d]_\partial & \\
& H_n(W|_{a_0} \sqcup W|_{a_{1}}\sqcup W|_{a_{2}}, W|_{a_{0}}\sqcup W|_{a_{2}}) \ar[r] & \cdots }$$
of the triple $(W|_{a_{0}}\sqcup W|_{a_{2}}, W|_{a_0} \sqcup W|_{a_{1}}\sqcup W|_{a_{2}}, W|_{[a_{0}, a_{2}]})$ it is clearly mapped to zero: 

\begin{align*}
\partial(j_0(w_i) + j_1(v_i)) &= \partial_{\text{in}}(w_i) + \partial_{\text{out}}(w_i) + \partial_{\text{in}}(v_i) + \partial_{\text{out}}(v_i) \\
&= c(x_i-y_i + y_i - z_i) \\
&= 0,
\end{align*}

\noindent where $c \colon H_n(W|_{a_0} \sqcup W|_{a_{1}}\sqcup W|_{a_{2}}) \rightarrow H_n(W|_{a_0} \sqcup W|_{a_{1}}\sqcup W|_{a_{2}}, W|_{a_0} \sqcup W|_{a_{2}})$ is induced by the inclusion, as $c(x_i-z_i)$ comes from $H_n(W|_{a_0} \sqcup W|_{a_2})$.	We can therefore pick preimages $u_i \in H_{n+1}(W|_{[a_{0}, a_{2}]}, W|_{a_{0}}\sqcup W|_{a_{2}})$ of $j_0(w_i) + j_1(v_i)$. These satisfy $\partial_{\text{out}} u_i = z_i$ and so we can repeat the process until we have constructed a subspace
$$V^0 \subseteq H_{n+1}(W|_{[a_0,a_p]}, W|_{a_0} \sqcup W|_{a_p}) \cong H_{n+1}^{\cpt}(W|_{(a_0-\varepsilon_0, a_p + \varepsilon_p)})$$
By construction and signed commutativity of the diagram
$$\xymatrix{& H^{\cpt}_{n+1}(W|_{(a_{0}-\varepsilon_{0}, a_{i}+\varepsilon_{i})}) \ar[dl]_{-|_{[a_0,a_{i}]}}^\cong \ar[dr]^{\cdot|_{a_{\nu}}} & \\
H_{n+1}(W|_{[a_{0}, a_{i}]}, \; W|_{a_{0}}\sqcup W|_{a_{i}}) \ar[r]^-{\partial} & H_{n}(W|_{a_{0}}\sqcup W|_{a_{i}}) \ar[r]^-{\text{pr}_\nu} & H_{n}(W|_{a_{\nu}})}$$
for $\nu = 0,i$ we find $V^0|_{a_0} = L_0$ and $V^0|_{a_i} \leq L_i$ for all $i \neq 0$. 

The other required subspaces $V^i$ are constructed in an entirely analogous fashion.
\end{proof}

\begin{lemma} \label{lemma: inclusion of zero simplices}
The map $|\mb{D}^{\mathcal{L}}_{\bullet}| = |{\mb{C}}^{\mathcal{L}}_{\bullet, 0}| \hookrightarrow |{\mb{C}}^{\mathcal{L}}_{\bullet, \bullet}|$ induced by inclusion of zero-simplices is a weak homotopy equivalence. 
\end{lemma}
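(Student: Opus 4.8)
The plan is to realise $\mb{C}^{\mathcal{L}}_{\bullet,\bullet}$ in the first (the $p$-) simplicial direction. Setting $Y_{q} := |\mb{C}^{\mathcal{L}}_{\bullet,q}|$ one obtains a semi-simplicial space $Y_{\bullet}$ with $|Y_{\bullet}| \simeq |\mb{C}^{\mathcal{L}}_{\bullet,\bullet}|$ and with $Y_{0} = |\mb{C}^{\mathcal{L}}_{\bullet,0}| = |\mb{D}^{\mathcal{L}}_{\bullet}|$, so that the map to be analysed becomes the canonical inclusion $Y_{0} \hookrightarrow |Y_{\bullet}|$ of the space of $0$-simplices. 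I would then invoke the standard criterion (a general fact about semi-simplicial spaces, compare the techniques of \cite[Section 6.2]{GRW 14}) that such an inclusion is a weak homotopy equivalence as soon as all the face maps $d_{i}\colon Y_{q} \longrightarrow Y_{q-1}$ are weak homotopy equivalences. Now $\mb{C}^{\mathcal{L}}_{\bullet,q}$ is nothing but the $(q{+}1)$-fold fibre power of $\mathcal{F}\colon \mb{D}^{\mathcal{L}}_{\bullet} \longrightarrow \mb{C}^{\mathcal{L}}_{\bullet}$ over $\mb{C}^{\mathcal{L}}_{\bullet}$, with $d_{i}$ forgetting the $i$-th copy of the homology data; so the lemma is reduced to proving that forgetting one copy, $|\mb{C}^{\mathcal{L}}_{\bullet,q}| \longrightarrow |\mb{C}^{\mathcal{L}}_{\bullet,q-1}|$, is a weak homotopy equivalence for every $q \geq 1$.

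For the latter I would introduce one further augmented topological flag complex, now in an auxiliary simplicial direction whose $r$-simplices over a point of $\mb{C}^{\mathcal{L}}_{\bullet,q-1}$ record $r{+}1$ admissible choices of the forgotten copy, and apply Theorem \ref{theorem: flag complex equivalence}. The fibre of $\mathcal{F}$ over a $p$-simplex $x=(a,\varepsilon,W,\ell,L)$ is the set $\mb{Z}_{0}(x)$ of tuples of subgroups of $H^{\cpt}_{n+1}(W|_{(a_{0}-\varepsilon_{0},a_{p}+\varepsilon_{p})})$ restricting to the prescribed Lagrangians at the slices; the key structural observation is that although $\mb{Z}_{0}(x)$ is discrete it is closed under coordinatewise sum of subgroups, hence is a filtered poset. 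Non-emptiness of $\mb{Z}_{0}(x)$ is precisely what was established in the proof of Lemma \ref{lemma: augmentation equivalence}, which supplies the surjectivity hypothesis (ii); the orthogonality hypothesis (iii) of Theorem \ref{theorem: flag complex equivalence} is then as trivial as it was there, since any finite set of admissible lifts has its coordinatewise sum as a common orthogonal element; and the local-lifting hypothesis (i) follows from Proposition \ref{conti} and the smooth parametrisation $\psi$ fixed in Definition \ref{defn: basic lagrangian cob cat}, exactly as in \cite[Proposition 6.10]{GRW 14}. Feeding this back produces the required face-map equivalences, and hence the lemma.

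The step I expect to be the main obstacle is genuinely proving that forgetting a copy of the homology data is a weak equivalence after geometric realisation. In contrast to the augmentation leg, the map $\mathcal{F}$ is \emph{not} a levelwise weak equivalence --- the fibres $\mb{Z}_{0}(x)$ are discrete but in general infinite --- so nothing here is purely formal, and one is in effect re-proving the content of Theorem \ref{theorem: replacement of nerve}. What makes it go through is that the face maps $d_{0}$ and $d_{p}$ of $\mb{D}^{\mathcal{L}}_{\bullet}$ retain only the restriction of the compactly supported homology data to a proper subinterval, so that two admissible lifts over a common $p$-simplex can be connected through higher-dimensional simplices agreeing with one lift over one subinterval and with the other over the complementary one; together with the filtered-poset structure of the $\mb{Z}_{0}(x)$ this suffices to wash out the superfluous components once one realises in the $p$-direction. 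The remaining effort is the point-set bookkeeping for bi-semi-simplicial geometric realisations and for the topology on $\bPsi^{\Delta}_{\theta}$ introduced in Construction \ref{construction: construction of topology}.
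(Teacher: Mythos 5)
Your reduction contains a circularity. You reduce the lemma to showing that each face map $d_i\colon |\mb{C}^{\mathcal{L}}_{\bullet,q}| \to |\mb{C}^{\mathcal{L}}_{\bullet,q-1}|$ is a weak equivalence, and propose to establish this by building an augmented flag complex $\mb{W}_{\bullet,\bullet}$ over $\mb{C}^{\mathcal{L}}_{\bullet,q-1}$ recording admissible lifts of the forgotten copy. Theorem \ref{theorem: flag complex equivalence}, applied levelwise in $p$ and then realised, does give $|\mb{W}_{\bullet,\bullet}| \simeq |\mb{C}^{\mathcal{L}}_{\bullet,q-1}|$; but the source of the face map is $|\mb{C}^{\mathcal{L}}_{\bullet,q}| = |\mb{W}_{\bullet,0}|$, so you would still need the inclusion of zero-simplices $|\mb{W}_{\bullet,0}| \hookrightarrow |\mb{W}_{\bullet,\bullet}|$ to be a weak equivalence --- the very same kind of statement you set out to prove, now for another bi-semi-simplicial space of identical shape (a flag complex sitting over its own augmentation). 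You flag the obstacle yourself ("one is in effect re-proving the content of Theorem \ref{theorem: replacement of nerve}"), but the subsequent remarks about overlapping sub-intervals and the filtered-poset structure of $\mb{Z}_0(x)$ are a heuristic, not a construction. A smaller issue is that the "standard criterion" you invoke at the outset (for a semi-simplicial space $Y_\bullet$, all face maps being weak equivalences forces $Y_0 \hookrightarrow |Y_\bullet|$ to be one) is not what \cite[Section 6.2]{GRW 14} contains; that section is the flag-complex theorem, which concerns the map from the realisation to the augmentation, and the criterion you want would itself require a separate argument.

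What is genuinely needed, and what the paper supplies, is a direct comparison of $|\mb{C}^{\mathcal{L}}_{\bullet,\bullet}|$ with $|\mb{C}^{\mathcal{L}}_{\bullet,0}|$ at the level of total realisations rather than levelwise. After relaxing to a model $\mb{C}'_{\bullet,\bullet}$ in which the marked intervals $[a_i-\varepsilon_i,a_i+\varepsilon_i]$ are allowed to overlap, one builds an explicit retraction $r\colon |\mb{C}'_{\bullet,\bullet}| \to |\mb{C}'_{\bullet,0}|$ via the reindexing $(p,q)\mapsto (p+1)(q+1)-1$: a $(p,q)$-bisimplex with $p+1$ marked values each carrying $q+1$ collections of homological data is re-read as a $((p+1)(q+1)-1)$-simplex with $(p+1)(q+1)$ possibly-coinciding marked values, each carrying a single collection. (This is precisely why the model $\mb{D}^{\mathcal L}_\bullet$ records a separate subspace $V_i$ for each slice.) Surjectivity of $r_*$ is automatic since $r$ is a retraction; injectivity is obtained by checking that $|\varepsilon|$ and $|\varepsilon_0|\circ r$ agree after composing with the quotient to the realisation $||\mb{C}'_\bullet||$ that implements degeneracies, which is a weak equivalence by Segal's \cite[Proposition A.1]{S 74} once the degeneracy maps are verified to be cofibrations. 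This Segal-style reindexing retraction is the ingredient your proposal is missing, and the flag-complex machinery, which only addresses the augmentation leg, cannot substitute for it.
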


\begin{proof}
The proof of this lemma follows the argument from \cite[Page 327]{GRW 14}. We spell it out as there is a small oversight in the final argument in \cite{GRW 14}, which we correct.
	
To begin, we resurrect the simplicial space $\widehat{\mb C}_\bullet^{\mathcal L}$ from Definition \ref{defn: convenient model for nerve}; recall that this meant that the inequalities $a_{i} + \varepsilon_{i} < a_{i+1} - \varepsilon_{i+1}$ are replaced by $a_{i} \leq a_{i+1}$. There is an evident augmented bi-semi-simplicial space $\widehat{\mb{C}}_{\bullet, \bullet} \longrightarrow \widehat{\mb{C}}_{\bullet, -1}$ defined similarly with $\widehat{\mb{C}}_{\bullet, -1} = \widehat{\mb{C}}_{\bullet}$. By Lemma \ref{lemma: equivalence of geometric realizations} it suffices to show that $|\widehat{\mb{C}}_{\bullet, 0}| \hookrightarrow  |\widehat{\mb{C}}_{\bullet, \bullet}|$ is a weak homotopy equivalence. 
	
We will define a retraction $r: |\widehat{\mb{C}}_{\bullet, \bullet}| \longrightarrow |\widehat{\mb{C}}_{\bullet, 0}|$ which is a weak homotopy equivalence. For each $p, q \in \Z_{\geq 0}$ there is a map
\[h_{p,q}: \widehat{\mb{C}}_{p, q} \; \longrightarrow \; \widehat{\mb{C}}_{(p+1)(q+1)-1, 0}\]
given by considering $p+1$ regular values, each equipped with $(q+1)$ collections of subspaces of $H^{\cpt}_{n+1}(W)$, as $(p+1)(q+1)$ not-necessarily distinct regular values each equipped with a single collection of subspaces of $H^{\cpt}_{n+1}(W)$. For example, in the case that $p = 1$ and $q = 2$, the map $\widehat{\mb{C}}_{1, 2} \; \longrightarrow \; \widehat{\mb{C}}_{5, 0}$ is given by sending 
$$\left((a_{0}, a_{1}), \; \; (W, \ell), \; \; (L_{0}, L_{1}), \; \; (V^{0}_{0}, V^{0}_{1}), (V^{1}_{0}, V^{1}_{1}), (V^{2}_{0}, V^{2}_{1})\right)$$
to the element
$$\left((a_{0}, a_{0}, a_{0}, a_{1}, a_{1}, a_{1}), \; \; (W, \ell), \; \; (L_{0}, L_{0}, L_{0}, L_{1}, L_{1}, L_{1}), \; \;  (V^{0}_{0}, V^{1}_{0}, V^{2}_{0},  V^{0}_{1}, V^{1}_{1}, V^{2}_{1})\right),$$
where we have dropped the data $\varepsilon = (\varepsilon_{0}, \varepsilon_{1})$ from the notation to save space. Being able to do this is the very reason for having distinct subspaces $V_i$ for every slice $W|_{a_i}$. There is also a map 
\[\rho_{p,q}: \Delta^{p}\times\Delta^{q} \longrightarrow \Delta^{(p+1)(q+1)-1} \; \subset \; \R^{(p+1)(q+1)}\]
with $(i + (q+1)j)$th coordinate given by $(t, s) \mapsto t_{j}s_{i}$. Taking the product of these maps yields 
\[r_{p,q}: \widehat{\mb{C}}_{p,q}\times\Delta^{p}\times\Delta^{q} \; \longrightarrow \; \widehat{\mb{C}}_{(p+1)(q+1)-1, 0}\times\Delta^{(p+1)(q+1)-1}\]
which glue together to give a map $r: |\widehat{\mb{C}}_{\bullet, \bullet}| \longrightarrow |\widehat{\mb{C}}_{\bullet, 0}|$. It is clear that this map is a retraction and thus the induced map on homotopy groups is surjective. Consider the augmentation map $|\varepsilon|: |\widehat{\mb{C}}_{\bullet, \bullet}| \longrightarrow |\widehat{\mb{C}}_{\bullet}|$ in the second bi-semi-simplicial coordinate. By Lemma \ref{lemma: augmentation equivalence}, this map is a weak homotopy equivalence. The fact that $r$ induces an injection on homotopy groups will follow once we prove that $|\varepsilon|\colon |\widehat{\mb{C}}_{\bullet, \bullet}| \longrightarrow |\widehat{\mb{C}}_{\bullet}|$ induces the same map on homotopy groups as
\[\xymatrix{|\widehat{\mb{C}}_{\bullet, \bullet}| \ar[r]^{r} & |\widehat{\mb{C}}_{\bullet, 0}|  \ar[r]^{|\varepsilon_{0}|} & |\widehat{\mb{C}}_{\bullet}|.}\]
In \cite{GRW 14} it is claimed that the (analoguous) two maps are in fact equal, but this is not true: Chasing an element through the composition shows only that it is a degeneracy of its image under $|\varepsilon|$, as its entries have been repeated as indicated in the example above. 	It can, however, be checked by hand that the diagram 
\begin{equation} \label{equation: degeneracy commutative diagram}
\xymatrix{|\widehat{\mb{C}}_{\bullet, \bullet}| \ar[dr]^{|\varepsilon|} \ar[r]^{r} & |\widehat{\mb{C}}_{\bullet, 0}| \ar[r]^{|\varepsilon_{0}|} & |\widehat{\mb{C}}_{\bullet}| \ar[d]^{p}   \\
& |\widehat{\mb{C}}_{\bullet}| \ar[r]^{p}  &  |\widehat{\mb{C}}_{\bullet}|^{\mathrm{th}}}
\end{equation}
is commutative, where $|-|^{\mathrm{th}}$ denotes the thin realization collapsing degenerate simplices. From Lemma \ref{lemma: equivalence of geometric realizations} we find that $p$ induces an isomorphism in homotopy groups which completes the argument.
\end{proof}

Again the goodness of $\widehat{\mb{C}}_{\bullet}$ can be avoided by instead noting that the two maps $|\varepsilon|$ and $|\varepsilon| \circ r$ are homotopic through a straight line homotopy (through the degenerate simplices by whose appearance they differ). With the two lemmas above it follows that the maps $|{\mb{C}}^{\mathcal{L}}_{\bullet, 0}| \hookrightarrow |{\mb{C}}^{\mathcal{L}}_{\bullet, \bullet}| \longrightarrow |{\mb{C}}^{\mathcal{L}}_{\bullet}|$ are weak homotopy equivalences. Identifying $|\mb{D}^{\mathcal{L}}_{\bullet}| = |{\mb{C}}^{\mathcal{L}}_{\bullet, 0}|$, establishes the weak homotopy equivalence $|\mb{D}^{\mathcal{L}}_{\bullet}| \simeq |{\mb{C}}^{\mathcal{L}}_{\bullet}|$
and completes the proof of Theorem \ref{theorem: replacement of nerve}.

\subsection{Flexible models}\label{flexmod} 
The following definition builds on \cite[Definition 2.8]{GRW 14}. We do, however, need a slightly more general version than provided by loc.cit. for the proof of Theorem \ref{theorem: inclusion of disk} at the end of this section.

\begin{defn} \label{defn: more flexible model}
Define $\mb{X}^{\mathcal{L}}_{\bullet}$ to be the semi-simplicial space with $p$-simplices consisting of certain tuples $(a, \varepsilon, (W, \ell), (V_{0}, \dots, V_{p}))$ with $a \in \R^{p+1}$, $\varepsilon \in \R^{p+1}_{>0}$, and
$$(W, \ell, V_{i}) \in \bPsi^{\Delta}_{\theta}((a_{0}- \varepsilon_{0}, a_{p}+\varepsilon_{p})\times\R^{\infty}) \quad \text{for all $i = 0, \dots, p$,}$$
subject to the following conditions:
\begin{enumerate} 
	\item[(i)] $W$ is contained in $(a_{0}-\varepsilon_{0}, a_{p}+\varepsilon_{p})\times(-1, 1)^{\infty}$;
	\item[(ii)] $a_{i-1} + \varepsilon_{i-1} < a_{i} - \varepsilon_{i}$ for $i = 1, \dots, p$;
	\item[(iii)] For any two regular values $b < c \in \cup_{i=0}^{p}(a_{0}-\varepsilon_{0}, a_{p}+\varepsilon_{p})$ of the height function $W \longrightarrow \R$, the pair $(W|_{[b, c]}, W|_{c})$ is $(n-1)$-connected. 
	\item[(iv)] 	Let $i = 0, \dots, p$. If $c \in (a_{i}-\varepsilon_{i}, a_{i}+\varepsilon_{i})$ is a regular value for the height function $W \longrightarrow \R$, then $V_{i}|_{c} \leq H_{n}(W|_{c})$ is a Lagrangian subspace and $V_{j}|_{c} \leq V_{i}|_{c}$ for all other $j = 0, \dots, p$.
\end{enumerate}
\end{defn}

The space $\mb{X}^{\mathcal{L}}_{p}$ is topologized in the same way as the space $\mb{D}^{\mathcal{L}}_{p}$ and possesses evident face maps $d_{i}: \mb{X}^{\mathcal{L}}_{p} \longrightarrow \mb{X}^{\mathcal{L}}_{p-1}$ for $0 \leq i \leq p$, by forgetting the $i$-th piece of data (and appropriately restricing $V$ and $W$ for $i=0,p$). Proposition \ref{conti} implies that these maps are continuous and we obtain a semi-simplicial space $\mb{X}^{\mathcal{L}}_{\bullet}$. Notice that the principal difference between $\mb{X}^{\mathcal{L}}_{\bullet}$ and $\mb{D}^{\mathcal{L}}_{\bullet}$ is that for any $(a, \varepsilon, (W, \ell), V) \in \mb{X}^{\mathcal{L}}_{p}$ the manifold $W$ is not required to be cylindrical over the intervals $(a_{i}-\varepsilon_{i}, a_{i}+\varepsilon_{i})$. Furthermore, these intervals need not even be comprised entirely of regular values for the height function. The formula
\begin{equation} \label{equation: D to X comparison} 
(a, \varepsilon, W, \ell, V) \; \mapsto \; \left(a,\; \varepsilon, \; W|_{(a_{0}-\varepsilon_{0}, a_{p}+\varepsilon_{p})}, \ell|_{(a_{0}-\varepsilon_{0}, a_{p}+\varepsilon_{p})}, \; V\right)
\end{equation}
induces a semi-simplicial map $\mb{D}^{\mathcal{L}}_{\bullet} \longrightarrow \mb{X}^{\mathcal{L}}_{\bullet}$ which is continuous by Proposition \ref{conti}.

\begin{defn} \label{defn: sub flexible models}
We define a sequence of sub-semi-simplicial spaces of $\mb{X}^{\mathcal{L}}_{\bullet}$  (compare (\ref{equation: sequence of D subspaces})):
\begin{enumerate} 
	\item[(a)] $\mb{X}^{\mathcal{L}, D}_{\bullet} \subset \mb{X}^{\mathcal{L}}_{\bullet}$ has as its $p$-simplices those $(a, \varepsilon, (W, \ell), V)$ such that $W$ contains 
	$$(a_{0}-\varepsilon_{0}, a_{p}+\varepsilon_{p})\times D$$ 
	and such that the restriction of $\ell$ to $(a_{0}-\varepsilon_{0}, a_{p}+\varepsilon_{p})\times D$ agrees with the structure $\ell_{\R\times D}$ used in the definition of $\Cob^{D}_{\theta}$.
	\item[(b)] Let $l \in \Z_{\geq -1}$. $\mb{X}^{\mathcal{L}, l}_{\bullet} \subset \mb{X}^{\mathcal{L}, D}_{\bullet}$ has as its $p$-simplices those $(a, \varepsilon, (W, \ell), V)$ with the property that for any regular value $c \in \cup_{i=0}^{p}(a_{i} - \varepsilon_{i}, a_{i} + \varepsilon_{i})$ of the height function, the manifold $W|_{c}$ is $l$-connected. 
\end{enumerate}
\end{defn}

\begin{proposition} \label{proposition: equivalence to flexible model}
	The map from  (\ref{equation: D to X comparison}) induces a weak homotopy equivalence,
	$|\mb{D}^{\mathcal{L}}_{\bullet}| \simeq |\mb{X}^{\mathcal{L}}_{\bullet}|.$
	Similarly, it induces weak equivalences 
	$|\mb{D}^{\mathcal{L}, D}_{\bullet}| \simeq |\mb{X}^{\mathcal{L}, D}_{\bullet}|$ and $|\mb{D}^{\mathcal{L}, l}_{\bullet}| \simeq |\mb{X}^{\mathcal{L}, l}_{\bullet}|.$
\end{proposition}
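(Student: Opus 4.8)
The plan is to imitate the proofs of \cite[Theorem 3.9]{GRW 09} and \cite[Proposition 2.20]{GRW 14}, which establish exactly this kind of comparison between a ``rigid'' nerve-type model and a ``flexible'' one in the undecorated setting, and to verify at each stage that the tuples $(V_{0},\dots,V_{p})$ of subspaces of $H^{\cpt}_{n+1}(W|_{(a_{0}-\varepsilon_{0},a_{p}+\varepsilon_{p})})$ can be transported along. The map of (\ref{equation: D to X comparison}) performs two relaxations simultaneously: it forgets the part of $W$ lying outside the core interval $(a_{0}-\varepsilon_{0},a_{p}+\varepsilon_{p})$, and it drops the requirement that $W$ be cylindrical near each $a_{i}$ --- indeed that the marked levels even be regular values of the height function. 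Both relaxations are of precisely the type treated in loc.\ cit.\ at the level of manifolds. The essential new point is that none of the manipulations occurring there (reparametrising the height coordinate, perturbing the marked points, inserting or deleting cylinders, truncating outside the core interval) alter $H^{\cpt}_{n+1}$ of the core of $W$, and each acts on this group compatibly with the restriction maps $\cdot|_{c}$ and the shriek maps $j_{!}$ of Subsection~\ref{section: homological preliminaries}; hence a decorating tuple is carried through canonically, and the Lagrangian and nesting conditions of Definitions~\ref{defn: basic lagrangian cob cat} and~\ref{defn: more flexible model} --- which only see properties of the various $V_{i}|_{c}$ at regular levels --- are preserved.

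Concretely, I would factor the comparison through an intermediate semi-simplicial space $\mb{X}^{\mathcal{L},c}_{\bullet}$ defined like $\mb{X}^{\mathcal{L}}_{\bullet}$ but with the additional requirement that each $a_{i}$ be a regular value and that $W$ be cylindrical on $(a_{i}-\varepsilon_{i},a_{i}+\varepsilon_{i})$ (with $W$ still recorded only over the core interval). On the one hand, the inclusion $\mb{X}^{\mathcal{L},c}_{\bullet}\hookrightarrow\mb{X}^{\mathcal{L}}_{\bullet}$ should be a levelwise weak homotopy equivalence: given a compact family of $p$-simplices, one first pushes the $a_{i}$ slightly and shrinks the $\varepsilon_{i}$, fibrewise over the parameter space, to make the marked levels regular (using Sard together with the openness of the regular-value condition, exactly as in \cite[Proposition 2.20]{GRW 14}), and then stretches $W$ near each $a_{i}$ to install a cylindrical collar; the first move fixes $W|_{(a_{0}-\varepsilon_{0},a_{p}+\varepsilon_{p})}$ and hence the $V_{i}$ outright, and under the stretch the $V_{i}$ are transported along the evident inclusion of the untouched core of $W$, condition (iv) being preserved since the relevant regular values are unchanged. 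On the other hand, the restriction map $\mb{D}^{\mathcal{L}}_{\bullet}\to\mb{X}^{\mathcal{L},c}_{\bullet}$ is a weak homotopy equivalence on realisations by the argument of \cite[Theorem 3.9]{GRW 09}: once the endpoints have been made regular and cylindrical (again at the cost of a harmless perturbation) one extends an $\mb{X}^{\mathcal{L},c}$-simplex to a $\mb{D}^{\mathcal{L}}$-simplex by gluing half-infinite cylinders at the two ends, which furnishes a section up to the semi-simplicial structure, and $H^{\cpt}_{n+1}$ of the core --- hence the decorating data --- is again unchanged. Composing the two steps yields $|\mb{D}^{\mathcal{L}}_{\bullet}|\simeq|\mb{X}^{\mathcal{L}}_{\bullet}|$, and the subspace versions $\mb{X}^{\mathcal{L},D}_{\bullet}$, $\mb{X}^{\mathcal{L},l}_{\bullet}$ are handled by the identical argument since the defining conditions there (that $W$ contain $(a_{0}-\varepsilon_{0},a_{p}+\varepsilon_{p})\times D$ with the fixed structure $\ell_{\R\times D}$, respectively that every regular slice be $l$-connected) are manifestly preserved by perturbations, cylinder insertions and truncations.

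The step I expect to require the most care is the levelwise equivalence $\mb{X}^{\mathcal{L},c}_{\bullet}\hookrightarrow\mb{X}^{\mathcal{L}}_{\bullet}$ --- specifically, the bookkeeping that the straightening of $W$ near a non-regular marked point can be performed continuously in families while keeping the homological decoration coherent. The subtlety is that the perturbation of $a_{i}$ to a nearby regular value must be carried out compatibly across all the $a_{i}$ at once (keeping them ordered with disjoint $\varepsilon$-neighbourhoods) and across the parameter space, and that after the straightening the nesting $V_{j}|_{c}\leq V_{i}|_{c}$ at the new regular level $c$ still holds --- which one checks using the commuting triangle relating $j_{!}$, $\cdot|_{c}$ and $\partial$ recorded just after (\ref{equation: shreik homomorphism 2}) and in the diagram (\ref{equation: shreik boundary diagram 1}), exactly as in the proof of Proposition~\ref{proposition: d to c map}. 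Everything else should be a routine transcription of the cited arguments from \cite{GRW 09} and \cite{GRW 14}.
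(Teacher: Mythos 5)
Your proposal is correct and uses the same overall strategy as the paper: factor the comparison through an intermediate semi-simplicial space and apply the arguments of \cite[Theorem 3.9]{GRW 09} and \cite[Proposition 2.20]{GRW 14}, while verifying that the decorating tuples $V_i$ are carried along compatibly. The one genuine difference is the choice of intermediate. The paper interposes a $\mb{D}$-type space $\mb{D}^{\mathcal{L},\pitchfork}_\bullet$, where cylindricality of $W$ near the $a_i$ is relaxed to mere regularity but the manifold is still recorded on all of $\R\times\R^\infty$; the first map then de-rigidifies, and the second simultaneously truncates and drops regularity. You interpose $\mb{X}^{\mathcal{L},c}_\bullet$, which truncates first (retaining full cylindricality) and de-rigidifies second. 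Both decompositions are valid since the two relaxations commute, and both ultimately draw on the same pool of techniques; yours has the mild advantage of cleanly separating the truncation (which acts trivially on the $V_i$, since by Definition~\ref{defn: basic lagrangian cob cat} these already live in $H^{\cpt}_{n+1}$ of the core interval) from the de-rigidification, while the paper's first step has the advantage that the $a_i$ need not be moved at all. The point you flag as requiring the most care---that perturbing the $a_i$ to nearby regular values in families preserves the nesting $V_j|_c \leq V_i|_c$---is indeed the crux and is correctly handled by the diagram arguments you cite from Section~\ref{section: homological preliminaries} and Proposition~\ref{proposition: d to c map}.
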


\begin{proof}[Proof sketch]
Let $\mb{D}^{\mathcal{L}, \pitchfork}_{\bullet}$ be the semi-simplicial space with $p$-simplices given by tuples $(a, \varepsilon, (W, \ell), V)$ as in the definition of $\mb{D}^{\mathcal{L}}_{\bullet}$, but instead of requiring $W$ to be cylindrical over the intervals $(a_{i}-\varepsilon_{i}, a_{i}+\varepsilon_{i})$, we only require $W$ to be \textit{regular} over the intervals $(a_{i}-\varepsilon_{i}, a_{i}+\varepsilon_{i})$. By this we mean that we require each interval $(a_{i}-\varepsilon_{i}, a_{i}+\varepsilon_{i})$ to consist entirely of regular values for the height function on $W$. There is an embedding $\mb{D}^{\mathcal{L}}_{\bullet} \hookrightarrow \mb{D}^{\mathcal{L}, \pitchfork}_{\bullet}$ and the map from the statement of the proposition factors as the composite,
$$\xymatrix{|\mb{D}^{\mathcal{L}}_{\bullet}| \ar[r]^{(1)} & |\mb{D}^{\mathcal{L}, \pitchfork}_{\bullet}| \ar[r]^{(2)} & |\mb{X}^{\mathcal{L}}_{\bullet}|.}$$
The proposition follows from the fact that both maps in this composition are weak homotopy equivalences. The proof that map $(1)$ is a weak homotopy equivalence proceeds exactly as \cite[Theorem 3.9]{GRW 09}, while the proof that map $(2)$ is a weak homotopy equivalence goes through in the same way as \cite[Proposition 2.20]{GRW 14}. 
\end{proof}

\subsection{Proof of Theorem \ref{theorem: inclusion of disk}}\label{section: Proof of disk inclusion}.
Finally, we give the proof of Theorem \ref{theorem: inclusion of disk}, which asserts the weak homotopy equivalence $B\Cob^{\mathcal{L}, D}_{\theta} \simeq B\Cob^{\mathcal{L}}_{\theta}$. 

\begin{proof}[Proof of Theorem \ref{theorem: inclusion of disk}]
By the results above it will suffice to prove that the inclusion of semi-simplicial spaces $\mb{X}^{\mathcal{L}, D}_{\bullet} \hookrightarrow \mb{X}^{\mathcal{L}}_{\bullet}$ induces a weak homotopy equivalence on geometric realization. This is proven in essentially the same way as \cite[Proposition 2.16]{GRW 14}. In loc.cit.\ the proof is carried out at the level of spaces of manifolds by construction of a homotopy inverse $r \colon \psi_\theta(\infty,1) \rightarrow \psi_{\theta,D}(\infty,1)$ to the inclusion. The map $r$ squeezes a given manifold $W \subseteq \mathbb R \times (-1,1)^\infty$ into $\mathbb R \times (1/2,1) \times (-1,1)^\infty$ and then adds the subset $\mathbb R \times S \subset \mathbb R \times (-1/2,1/2) \times (-1,1)^\infty$, where $S$ is a sphere with $S \cap (-1/2,0] \times (-1,1)^\infty = D$. Clearly this procedure provides maps
\[r_p \colon \mb{X}^{\mathcal{L}}_{\bullet} \rightarrow \mb{X}^{\mathcal{L}, D}_{\bullet}\]
as well (the Lagrangian being transported in the evident manner) and the homotopies decribed by Galatius and Randal-Williams show $r_p$ is a levelwise homotopy inverse to the inclusion. 
\end{proof}

The same argument would not work at the level of $\mb C$- or $\mb D$-spaces as the homotopies move critical points through the sets $(a_i-\epsilon_i,a_i+\epsilon_i)$ and we do not have analogues of the $\psi$-spaces.


\section{Surgery on Manifolds Equipped with a Lagrangian Subspace} \label{section: preliminaries on surgery}
In this section we develop some technical results about surgery on manifolds that will be needed in Sections \ref{section: surgery on objects below the middle dimension} and \ref{section: surgery on objects in the middle dimension}. Without loss of continuity, the reader can skip this section and come back to it when its results are used.

\subsection{Transport of Lagrangians}
For what follows, let $M$ be a closed, $2n$-dimensional, oriented manifold and let $L \leq H_{n}(M)$ be a Lagrangian subspace with respect to the intersection form $(H_{n}(M), \lambda, \mu)$. Let 
\begin{equation} \label{equation: phi surgery embedding}
\phi: S^{k}\times D^{2n-k} \longrightarrow M
\end{equation}
be an embedding. Let $\widetilde{M}$ denote the manifold obtained by performing surgery on $M$ along the embedding $\phi$ and $M'$ the complement $M\setminus\phi(S^{k}\times\Int(D^{2n-k}))$. Let finally 
$$\xymatrix{H_{n}(M) & H_{n}(M') \ar[l]_{\alpha}  \ar[r]^{\beta} & H_{n}(\widetilde{M})}$$
denote the maps induced by inclusion. Consider the subspaces,
\[L' := \alpha^{-1}(L) \leq H_{n}(M') \quad \text{and} \quad \widetilde{L} := \beta(\alpha^{-1}(L)) \leq H_{n}(\widetilde{M}).\]
We will use this same notation throughout the rest of the section, which is devoted to proving the following:

\begin{theorem} \label{theorem: transport of lagrangian through surgery}
	Let $\phi: S^{k}\times D^{2n-k} \longrightarrow M$ be an (orientation preserving) embedding as in (\ref{equation: phi surgery embedding}) and suppose that $k < n$. 
	Then the subspace $\widetilde{L} \leq H_{n}(\widetilde{M})$ is a Lagrangian subspace with respect to $(H_{n}(\widetilde{M}), \lambda, \mu)$.
\end{theorem}

The reader willing to believe this result may well skip the rest of this section. The verification is a lengthy but routine homology calculation. We begin by proving the special case of this theorem when $k < n-1$. This proof of this special case is significantly easier than the $k = n-1$ case. 

\begin{proof}[Proof of Theorem \ref{theorem: transport of lagrangian through surgery} for $k < n-1$]
By excision we have isomorphisms
\begin{align*}
H_{*}(M, M') &\cong H_{*}(S^{k}\times D^{2n-k}, \; S^{k}\times S^{2n-k-1}), \\
H_{*}(\widetilde{M}, M') &\cong H_{*}(D^{k+1}\times S^{2n-k-1}, \; S^{k}\times S^{2n-k-1}),
\end{align*}
and since $k \leq n-2$ it follows that 
$$0 \; = \; H_{n}(M, M') \; = \; H_{n+1}(M, M') \; = \; H_{n}(\widetilde{M}, M') \; = \; H_{n+1}(\widetilde{M}, M').$$
From the long exact sequence associated to the pairs $(M, M')$ and $(\widetilde{M}, M')$ it follows that the maps
$$\xymatrix{H_{n}(M) & H_{n}(M') \ar[l]_{\alpha}  \ar[r]^{\beta} & H_{n}(\widetilde{M})}$$
are both isomorphisms. The maps $\alpha$ and $\beta$ are induced by codimension-$0$ embeddings and thus they preserve both the intersection pairing $\lambda$ and its refinement $\mu$. It follows that $\widetilde{L} = \beta(\alpha^{-1}(L)) \leq H_{n}(\widetilde{M})$ is a Lagrangian subspace.
\end{proof}

\subsection{Proof of Theorem \ref{theorem: transport of lagrangian through surgery} for $k = n-1$}
We now focus on proving Theorem \ref{theorem: transport of lagrangian through surgery} in the harder case that $k = n-1$. Let $\phi: S^{k}\times D^{2n-k} \longrightarrow M$ be as in (\ref{equation: phi surgery embedding}) and let $x \in H_{k}(M)$ be the class determined by $\phi|_{S^{k}\times\{0\}}: S^{k} \longrightarrow M$. The following is \cite[Lemma 5.6]{KM 63}. 

\begin{proposition} \label{lemma: intersection inclusion}
	Let
	$j: H_{2n-k}(M) \longrightarrow H_{2n-k}(M, M')$ be the map induced by inclusion and let
	$$\alpha_{2n-k} \in H_{2n-k}(M, M') \cong \Z$$ 
	be the generator induced by the orientation on $(D^{2n-k},S^{2n-k-1})$.
	The map $j$ is given by the formula 
	$$
	j(y) =  \lambda(x, y)\cdot\alpha_{2n-k} 
	$$
	for all $y \in H_{2n-k}(M)$,
	where $\lambda: H_{k}(M)\otimes H_{2n-k}(M) \longrightarrow \Z$ is the intersection pairing.
\end{proposition}

\begin{corollary} \label{corollary: disjunction corollary}
Let $y \in H_{2n-k}(M)$ be a class such that $\lambda(x, y) = 0$. Then the class $y$ is in the image of the map $H_{2n-k}(M') \longrightarrow H_{2n-k}(M)$ induced by inclusion.
\end{corollary}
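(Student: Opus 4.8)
The plan is to combine Lemma \ref{lemma: intersection inclusion} with the long exact sequence of the pair $(M, M')$; here $m = 2n$, so the relevant degree is $m-k = 2n-k$, and the case actually needed later is $k = n-1$, although the argument is insensitive to this. First I would write down the portion of the long exact sequence of $(M, M')$ around degree $m-k$,
$$
\cdots \longrightarrow H_{m-k}(M') \stackrel{i_*}{\longrightarrow} H_{m-k}(M) \stackrel{j}{\longrightarrow} H_{m-k}(M, M') \longrightarrow \cdots,
$$
where $i_*$ is induced by the inclusion $M' \hookrightarrow M$ and $j$ is the inclusion-of-pairs map from Lemma \ref{lemma: intersection inclusion}. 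Exactness at the middle term identifies the image of $i_*$ with $\ker j$.

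Next I would invoke Lemma \ref{lemma: intersection inclusion} directly: for any $y \in H_{m-k}(M)$ one has $j(y) = \lambda(x, y)\cdot\alpha_{m-k}$, where $x \in H_k(M)$ is the class of $\phi|_{S^k\times\{0\}}$ and $\alpha_{m-k}$ is a generator of $H_{m-k}(M, M') \cong \Z$. By hypothesis $\lambda(x, y) = 0$, so $j(y) = 0$, i.e.\ $y \in \ker j$. Combining this with the exactness observation above yields $y \in \operatorname{im}(i_*)$, which is exactly the assertion.

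There is essentially no obstacle in this argument — it is a formal consequence of the preceding lemma and the long exact sequence of a pair. The only point worth a moment's care is that $\alpha_{m-k}$ freely generates $H_{m-k}(M, M')\cong\Z$, so that the vanishing of the coefficient $\lambda(x,y)$ really does force $j(y) = 0$; this is already part of the content of Lemma \ref{lemma: intersection inclusion} as stated.
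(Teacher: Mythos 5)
Your argument is exactly the one the paper intends: the paper states that the corollary "follows by considering the exact sequence associated to the pair $(M, M')$," and your proof spells this out by combining Lemma \ref{lemma: intersection inclusion} with exactness of $H_{m-k}(M') \to H_{m-k}(M) \to H_{m-k}(M, M')$. The reasoning is correct and complete.
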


\begin{proof}
Immediate from the long exact sequence associated to the pair $(M, M')$.
\end{proof}

Now consider the situation of Theorem \ref{theorem: transport of lagrangian through surgery} again.

\begin{lemma} \label{claim: L' lagrangian}
The subspace $L' = \alpha^{-1}(L) \leq H_{n}(M')$ is a Lagrangian.
\end{lemma}

\begin{proof}
Let us start with the inclusion $(L')^\perp \leq L'$.	Let $v \in (L')^{\perp}$ and let $w \in L$. By surjectivity of $\alpha: H_{n}(M') \longrightarrow H_{n}(M)$, we choose $w' \in L' = \alpha^{-1}(L)$ such that $\alpha(w') = w$. Since $v \in (L')^{\perp}$ we have 
$$0 = \lambda(v, w') = \lambda(\alpha(v), w).$$
Since $w$ was arbitrary $\alpha(v) \in L^{\perp}$ and since $L$ is a Lagrangian it follows that $\alpha(v) \in L$ and so $v \in L' = \alpha^{-1}(L)$. This proves $(L')^{\perp} \leq L'$.
	
For the other inclusion suppose that $v, w \in L'$. Since $\alpha$ preserves the intersection pairing we have $\lambda(v, w) = \lambda(\alpha(v), \alpha(w))$.  Since $\alpha(v), \alpha(w) \in L$ and $L$ is Lagrangian it follows that $\lambda(v, w) = \lambda(\alpha(v), \alpha(w)) = 0$. This proves that $L'$ is isotropic. The same argument shows that $\mu$ vanishes on $L'$.
\end{proof}

\begin{proof}[Proof of Theorem \ref{theorem: transport of lagrangian through surgery} for $k = n-1$]
Let $x \in H_{n-1}(M)$ denote the class represented by the embedding $\phi|_{S^{n-1}\times\{0\}}: S^{n-1} \longrightarrow M$. Let $x' \in H_{n-1}(M')$ be the unique class that maps to $x$ under the map $H_{n-1}(M') \longrightarrow H_{n-1}(M)$ induced by inclusion, which is an isomorphism by the long exact sequence associated to the pair $(M, M')$. The proof breaks down into two cases: the case where $x$ is of infinite order and the case where $x$ is of finite order. 

\textit{Case 1:} Suppose that the class $x \in H_{n-1}(M)$ has infinite order. By Lemma \ref{claim: L' lagrangian} it will suffice to prove that the map $\beta: H_{n}(M') \longrightarrow H_{n}(\widetilde{M})$ is an isomorphism. Since $x$ has infinite order it follows that $x' \in H_{n-1}(M')$ has infinite order as well. Since the boundary map $H_n(\widetilde{M},M') \rightarrow H_{n-1}(M')$ of the long exact sequence for the pair $(\widetilde M, M')$ sends a generator to $x'$ it is injective. It follows that $\beta: H_{n}(M') \longrightarrow H_{n}(\widetilde{M})$ is surjective. Since $H_{n+1}(\widetilde{M}, M') = 0$, it follows $\beta$ is injective as well and thus an isomorphism. 

\textit{Case 2:} Suppose that $x$ is of order $m < \infty$. It follows that the class $x' \in H_{n-1}(M')$ (that maps to $x$) has order $m < \infty$ as well. As before $x'$ generates the image of $H_n(\widetilde{M},M') \rightarrow H_{n-1}(M')$ and using the same exact sequence as before we obtain
\begin{equation} \label{equation: beta exact sequence}
\xymatrix{0 \ar[r] & H_{n}(M') \ar[r]^-{\beta} & H_{n}(\widetilde{M}) \ar[r] & \kernel(\partial) \cong m\cdot \Z \ar[r] & 0.}
\end{equation}
Let now $\alpha_{n+1} \in H_{n+1}(M, M') \cong \Z$ denote the generator from Lemma \ref{lemma: intersection inclusion} and let $y' \in H_{n}(M')$ denote the class $\partial(\alpha_{n+1})$, where $\partial: H_{n+1}(M, M') \longrightarrow H_{n}(M')$ is the boundary map (it is represented by $\phi\colon\{0\}\times S^{n+1} \rightarrow M'$). We will need to use the following basic property about $y'$, whose proof we postpone until after the proof of the current proposition. 

\begin{uclaim} \label{claim: basic properties of y}
The class $y'$ has infinite order. Furthermore, $y' \in L'$ and $\lambda(y', v) = 0$ for all $v \in H_{n}(M')$.
\end{uclaim}

Let $\tilde{y} = \beta(y')$ for $\beta: H_{n}(M') \longrightarrow H_{n}(\widetilde{M})$. Since $\beta$ is injective it follows that $\tilde{y}$ has infinite order. Moreover, it follows that $\tilde{y} \in \widetilde{L} = \beta(L')$ by the claim. We make one more observation about the class $\tilde{y}$: $\langle \tilde{y} \rangle^\perp = \text{im}(\beta)$.

Indeed, the map $\lambda(\tilde y, \cdot): H_n(\widetilde M) \rightarrow \mathbb Z$ annihilates the image of $\beta$ by the claim above (giving one inclusion) and therefore factors over $\ker(\partial) \cong m\cdot\mathbb Z$ by exactness of (\ref{equation: beta exact sequence}). Also, it cannot be the null map, as the intersection pairing on $H_n(\widetilde M)$ is non-degenerate. As a non-zero homomorphism from one infinite cyclic group to another it is injective. These two facts imply that $\lambda(\tilde y, v) = 0$ if and only if the image of $v$ under $H_{n}(\widetilde{M}) \longrightarrow \ker(\partial) \subset H_{n}(\widetilde{M}, M')$ is equal to zero, which gives the other inclusion.
	
We are finally in a position to show that $\widetilde{L}$ is a Lagrangian subspace. Let $w \in \widetilde{L}^{\perp}$. Since $\tilde{y} \in \widetilde{L}$, we have $\lambda(\tilde y, w) = 0$ and thus $w = \beta(w')$ for some $w' \in H_n(M')$. Since $\beta$ preserves the intersection pairing it follows that $w' \in (L')^{\perp}$. By Lemma \ref{claim: L' lagrangian}, $L'$ is a Lagrangian subspace, so $w' \in L'$ which yields $w \in \widetilde L$.
	
This proves that $\widetilde{L}^{\perp} \leq \widetilde{L}$. Since $\widetilde{L}$ is by definition equal to $\beta(L')$, $\beta$ preserves the intersection pairing, and $L'$ is an isotropic subspace (i.e. $L' \leq (L')^{\perp}$), it follows that $\widetilde{L}$ is an isotropic subspace as well, so indeed $\widetilde{L}^{\perp} = \widetilde{L}$. The fact that $\mu$ vanishes on $\widetilde L$ also follows by the self-intersection form being preserved by $\beta$.
\end{proof}

It remains to verify the claim.

\begin{proof}[Proof of the Claim]
We begin by showing that the class $y' = \partial(\alpha_{n+1}) \in H_{n}(M')$ has infinite order. By assumption, the class $x \in H_{n-1}(M)$ has finite order. It follows that $\lambda(x, v) = 0$ for all $v \in H_{n+1}(M)$. It then follows from Lemma \ref{lemma: intersection inclusion} that the map $H_{n+1}(M) \longrightarrow H_{n+1}(M, M')$ is the zero map. By exactness the boundary map 
$$\partial: H_{n+1}(M, M') \longrightarrow H_{n}(M')$$
is then injective. Since $y' = \partial(\alpha_{n+1})$ (where $\alpha_{n+1} \in H_{n+1}(M, M') \cong \Z$ is the generator) it follows that $y'$ has infinite order. 
	
Since $y'$ is in the image of the boundary map $\partial$, it follows by exactness that $y'$ is in the kernel of $\alpha: H_{n}(M') \longrightarrow H_{n}(M)$. It follows from this that $y' \in \alpha^{-1}(L) = L'$, since $\alpha^{-1}(L)$ contains the kernel of $\alpha$. This establishes the third assertion of Claim \ref{claim: basic properties of y}. Let $v \in H_{n}(M')$. We have 
$$\lambda(v, y') = \lambda(\alpha(v), \alpha(y')) = \lambda(\alpha(v'), 0) = 0.$$
This proves that $\lambda(v, y') = 0$ for all $v \in H_{n}(M')$.
\end{proof}


\section{Surgery on Objects Below the Middle Dimension} \label{section: surgery on objects below the middle dimension}
Let $l \in \Z_{\geq -1}$. We proceed to prove Theorem \ref{theorem: surgery on objects below middle dim} which asserts that there is a weak homotopy equivalence $B\Cob^{\mathcal{L}, l-1}_{\theta} \simeq B\Cob^{\mathcal{L}, l}_{\theta}$ whenever $l \leq n-1$ and the tangential structure $\theta: B \longrightarrow BO(2n+1)\langle n \rangle$ is such that $B$ is $l$-connected and of type $F_{l+1}$. By Theorem \ref{theorem: replacement of nerve}, it will suffice to prove the weak homotopy equivalence $|\mb{D}^{\mathcal{L}, l-1}_{\bullet}| \simeq |\mb{D}^{\mathcal{L}, l}_{\bullet}|$. The proof will closely follow \cite[Section 4]{GRW 14}, so closely in fact, that we shall forego spelling out the construction of the surgery moves and instead ask the reader to have his copy of \cite{GRW 14} at the ready. In particular, we will reuse the constructions and notation of \cite{GRW 14} and only indicate the differences and extra steps that have to be taken. 

To give an outline, one considers a bi-semi-simplicial resolution $|\mb{D}^{\mathcal{L}, l}_{\bullet, \bullet}| \rightarrow |\mb{D}^{\mathcal{L}, l-1}_{\bullet}|$, in which a $(p,q)$-simplex consits of an element of $\mb{D}^{\mathcal{L}, l-1}_{p}$ together with $q+1$ disjoint pieces of surgery data, together with a `perform surgery map' $|\mb{D}^{\mathcal{L}, l}_{\bullet, \bullet}| \rightarrow |\mb{D}^{\mathcal{L}, l}_{\bullet}|$ and shows that both of these are weak equivalences. In truth, just as in \cite{GRW 14}, the second map is, however, only defined with source and target replaced by weakly equivalent spaces and the proofs for the maps being weak equivalences are intertwined with these auxillary spaces as well.

\subsection{A semi-simplicial resolution} \label{subsection: resolution n-1}
To conform with the notation of \cite[Section 4]{GRW 14}, set $d = 2n+1, \kappa = n-1, N = \infty$, $L = D$ and fix a positive integer $l < n$. Then the semi-simplicial space $\mb D^{\mathcal L,l}_\bullet$ of Definition \ref{defn: basic lagrangian cob cat} agrees with the nerve of the topological poset $D_{\theta,L}^{\kappa,l}$ of \cite[Section 2.6]{GRW 14}, except for the appearence of Lagrangians (of course) and our requirement that cobordisms be cylindrical in the $\epsilon$-neighbourhood of the boundary whereas Galatius and Randal-Williams only require the projection onto the first coordinate to not have critical points in the same $\epsilon$-neighbourhood. The second difference will play no role throughout the rest of this paragraph as it is preserved by all constructions to come.

For $x = (a, \varepsilon, (W, \ell_{W}), V) \in \mb{D}^{\mathcal{L}, l-1}_{p}$, put $x_u = (a, \varepsilon, (W, \ell_{W}))$ and consider the semi-simplicial space $Y^{l}_{q}(x_u)$ of surgery data from \cite[Definition 4.3]{GRW 14}. In correspondence with \cite[Definition 4.4]{GRW 14}, set
\[\mb{D}^{\mathcal{L}, l}_{p, q} = \{(x,y) \mid x \in \mb{D}^{\mathcal{L}, l-1}_{p}, y \in Y^{l}_{q}(x_u)\}.\]
Forgetting the surgery data produces an augmentation $\mb{D}^{\mathcal{L}, l}_{p, q} \rightarrow \mb{D}^{\mathcal{L}, l-1}_{p}$. Let us emphasize that the subspaces $(V_0, \dots, V_p)$ associated to an element $x \in \mb{D}^{\mathcal{L},l-1}_p$ play no role in the resolution. Therefore, the verification of the following result corresponds to that of its counterpart \cite[Theorem 4.5]{GRW 14} (given in \cite[Section 6]{GRW 14}) verbatim.

\begin{theorem} \label{theorem: contractibility of space of surgery data (n-1)-conn}
Let $l \leq n-1$ and 
suppose that $\theta: B \longrightarrow BO(2n+1)$ is such that $B$ is $l$-connected and of type $F_{l+1}$. 
Then there are weak homotopy equivalences
$$|\mb{D}^{\mathcal{L}, l}_{\bullet, 0}| \stackrel{\simeq} \longrightarrow |\mb{D}^{\mathcal{L}, l}_{\bullet, \bullet}| \stackrel{\simeq} \longrightarrow  |\mb{D}^{\mathcal{L}, l-1}_{\bullet}|,$$
where the first map is induced by inclusion of zero-simplices and the second is induced by the augmentation.
\end{theorem}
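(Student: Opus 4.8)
The plan is to follow the proof of \cite[Theorem 4.5]{GRW 14} essentially verbatim, the point being that the list $V = (V_{0}, \dots, V_{p})$ attached to an element of $\mb{D}^{\mathcal{L}, l-1}_{p}$ plays no role in the construction of $\mb{D}^{\mathcal{L}, l}_{\bullet, \bullet}$ (Remark \ref{remark: forget L}): forgetting $V$ exhibits our bi-semi-simplicial space as the corresponding object of \cite{GRW 14} with the Lagrangian data riding along passively. I would split the assertion into the two claims that the augmentation $|\mb{D}^{\mathcal{L}, l}_{\bullet, \bullet}| \longrightarrow |\mb{D}^{\mathcal{L}, l-1}_{\bullet}|$ and the inclusion of zero-simplices $|\mb{D}^{\mathcal{L}, l}_{\bullet, 0}| \longrightarrow |\mb{D}^{\mathcal{L}, l}_{\bullet, \bullet}|$ are weak homotopy equivalences, and treat them in turn.

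For the augmentation I would geometrically realise the first semi-simplicial coordinate and show that for each fixed $p$ the augmented semi-simplicial space $\mb{D}^{\mathcal{L}, l}_{p, \bullet} \longrightarrow \mb{D}^{\mathcal{L}, l}_{p, -1} = \mb{D}^{\mathcal{L}, l-1}_{p}$ is an augmented topological flag complex in the sense of Definition \ref{defn: topolgical flag complexes}, to which Theorem \ref{theorem: flag complex equivalence} then applies. The flag-complex axioms hold for formal reasons, since a $q$-simplex of $\mb{Y}^l_{\bullet}(x)$ is cut out of $(\mb{Y}^l_{0}(x))^{q+1}$ by pairwise disjointness of the embeddings $e^i$, which is an open and pairwise condition. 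Of the three hypotheses of Theorem \ref{theorem: flag complex equivalence}: condition (i), the existence of local lifts of maps out of a disk, is the usual jiggling and transversality argument of \cite{GRW 14} --- here one additionally records, using Proposition \ref{proposition: continuity of the family}, that the subspaces $V_{i}$ vary continuously along the auxiliary one-parameter families produced in the argument, which is automatic as they are images under inclusion-induced maps; condition (iii), the existence of a common orthogonal element for any finite collection, follows from non-emptiness of $\mb{Y}^l_{0}(x)$ together with a perturbation of the chosen embedding into finitely many unused coordinates of $\R^{\infty-1}$ (and a relabelling inside the infinite set $\Omega$) so as to be disjoint from finitely many compact images; and condition (ii), surjectivity of the augmentation, is equivalent to the assertion that $\mb{Y}^l_{0}(x) \neq \emptyset$ for every $x$, i.e. that each $(l-1)$-connected slice $M_{i} = W|_{a_{i}}$ carries surgery data of degree $l$ whose trace is compatible with the $\theta$-structure and whose effect $\widetilde{M}_{i}$ is $l$-connected. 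This last point is the geometric heart of the matter; it is exactly the content of \cite[Theorem 4.5]{GRW 14} and uses the hypothesis $l \leq n-1$ (so that $2l < 2n$ and generic immersions of $S^{l}$ into $M_{i}$ are embeddings with suitable normal data) together with the $l$-connectedness of $B$ (to extend the $\theta$-structure over the handle). None of these steps involves $V$.

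For the inclusion of zero-simplices I would run the retraction argument already used in this paper in the proof of Lemma \ref{lemma: inclusion of zero simplices}: first pass to the level-wise equivalent variant in which the heights are only required to be weakly increasing, $a_{0} \leq \dots \leq a_{p}$ (so that overlapping intervals are allowed), and then build a retraction from the geometric realisation of the full bi-semi-simplicial space onto that of its sub-semi-simplicial space of zero-simplices, out of maps $\mb{D}^{\mathcal{L}, l}_{p, q} \longrightarrow \mb{D}^{\mathcal{L}, l}_{(p+1)(q+1)-1, 0}$ that duplicate each of the $p+1$ heights $q+1$ times, duplicate the list $V$ accordingly, and assign to the $j$-th copy of a height the $j$-th piece of surgery data --- this is well defined precisely because the $q+1$ pieces of surgery data were chosen with disjoint images to begin with. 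Surjectivity on homotopy groups is immediate from this being a retraction, and injectivity follows exactly as in Lemma \ref{lemma: inclusion of zero simplices}, by comparing the relevant composite with the quotient onto the thick realisation of the simplicial space obtained by adjoining the evident degeneracies, which are closed cofibrations. I expect the main obstacle to be not conceptual --- nowhere does the argument interact with the Lagrangian structure, and the substantive input, the existence of $l$-connected surgery data, is imported wholesale from \cite{GRW 14} --- but rather the sheer volume of bookkeeping involved in transcribing that proof, which is why it is reasonable to content oneself with the reference.
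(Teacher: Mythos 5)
Your proposal takes essentially the same approach as the paper: the paper's own ``proof'' of this statement is a single sentence noting that the data $V$ plays no role in the definition of $\mb{D}^{\mathcal{L},l}_{\bullet,\bullet}$ (Remark \ref{remark: forget L}), so that the proof of \cite[Theorem 4.5]{GRW 14} carries over verbatim, and it is omitted. You reconstruct the GRW argument---augmentation via Theorem \ref{theorem: flag complex equivalence} applied level-wise, inclusion of zero-simplices via the retraction argument of Lemma \ref{lemma: inclusion of zero simplices}---and reach the same conclusion that the reference suffices. One sketch in your reconstruction is too quick, though. For condition (iii) of Theorem \ref{theorem: flag complex equivalence} you assert that the required common orthogonal element is found ``by perturbing the chosen embedding into finitely many unused coordinates of $\R^{\infty-1}$.'' That accounts only for the interior of the thickened handle $e(\Lambda\times(-6,-2)\times\R^{d-l-1}\times D^{l+1})$, which lives freely in the ambient $\R\times(-1,1)^{\infty-1}$. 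Its boundary piece $\partial e$ is, by condition (i) of Definition \ref{defn: dimension resolution n-2}, constrained to land \emph{inside} the fixed submanifold $W$, and it is codimension zero there, so it cannot be made disjoint from the boundary pieces of the given $v_i$ by moving into new ambient coordinates. Disjointness must be arranged by an actual general-position isotopy inside the slices $M_i$: one first separates the surgery cores, which are embedded $l$-spheres in the $2n$-manifolds $M_i$ and can be made pairwise disjoint precisely because $2l < 2n$ (this is where $l\leq n-1$ enters), then shrinks the normal data and adjusts the rest of the tube in unused coordinates. Since both you and the paper ultimately defer to \cite[Theorem 4.5]{GRW 14} for these details, this does not undermine your proposal, but it is the one place a referee would push back if you wrote it out.
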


\subsection{A surgery move} \label{subsection: surgery move below middle dimension}

To implement the surgery we resurrect the homotopy 
\[\mathcal S \colon [0,1] \times |D^{\kappa, l}_{\theta,L}(\mathbb R^N)_{\bullet, 0}| \longrightarrow |X_\bullet^{\kappa,l-1}|\]
from \cite[Lemma 4.7]{GRW 14}, starting at the forgetful map (followed by the inclusion $D^{\kappa,l-1}_{\theta,L} \subseteq X^{\kappa,l-1})$ and ending at a map that factors through the inclusion $X_\bullet^{\kappa,l} \subseteq X_\bullet^{\kappa,l-1}$. We would like to extend it to a commutative diagram
\[\xymatrix{[0,1] \times |\mb{D}^{\mathcal{L}, l}_{\bullet, 0}| \ar[r]^-{\mathcal F} \ar[d]& |\mb{X}^{\mathcal{L}, l-1}_{\bullet}| \ar[d] \\
[0,1] \times |D^{\kappa, l}_{\theta,L}(\mathbb R^N)_{\bullet, 0}| \ar[r]^-{\mathcal S} & |X_\bullet^{\kappa,l-1}|}\]
with the corresponding properties (and forgetful vertical maps). 

Comparing the definitions of $\mb{X}^{\mathcal{L}, l-1}_{\bullet}$ and $X_\bullet^{\kappa,l-1}$ all that remains is to produce the homological data on the underlying manifolds given by $\mathcal S$. To do so recall that $\mathcal S$ is glued from maps
\[\mathcal S_p \colon [0,1]^{p+1} \times D^{\kappa,l}_{\theta,L}(\mathbb R^N)_{p,0} \longrightarrow X_p^{\kappa,l-1}\]
given by 
\[(t,(a,\epsilon,(W,\ell_W)),(e,\ell)) \longmapsto (a,\epsilon/2,\mathcal K^t_{e,\ell}(W,\ell_W))\]
(after suppressing tangential structures) for $(e,\ell) \in Y^{l}_{0}(a,\epsilon,W)$ and the family 
\[\mathcal K^t_{e,\ell}(W,\ell_W) \in \Psi_\theta\big((a_0-\epsilon_0,a_p,\epsilon_p) \times \mathbb R^N\big)\]
from \cite[Lemma 4.6]{GRW 14}.
It therefore suffices to lift these maps $\mathcal S_p$ to maps 
\[\mathcal F_p \colon [0,1]^{p+1} \times \mb{D}^{\mathcal L,l}_{p,0} \longrightarrow \mb{X}_p^{\mathcal L,l-1}.\]
To do so we need to describe how to transport the subspaces $V_{0}, \dots, V_{p} \leq H_{n+1}^{\cpt}(W|_{(a_{0}- \varepsilon_{0}, a_{p}+\varepsilon_{p})})$ over to the homology group $H_{n+1}^{\cpt}(\mathcal{K}^{t}_{e_{i}, \ell_{i}}(W, \ell_{W}))$ for every constituent $(e_i,\ell_i)$ of $(e,\ell)$. Let $W'$ denote the complement $W|_{(a_{0}- \varepsilon_{0}, a_{p}+\varepsilon_{p})}\setminus\Int(\Image(e_{i}))$. 
For each $t \in [0, 1]$, let
$$\xymatrix{H^{\cpt}_{n+1}(W|_{(a_{0}- \varepsilon_{0}, a_{p}+\varepsilon_{p})})  && H_{n+1}^{\cpt}(W') \ar[ll]_-{\alpha} \ar[rr]^-{\beta^{t}} && H^{\cpt}_{n+1}(\mathcal{K}^{t}_{e_{i}, \ell_{i}}(W, \ell_{W}))}$$
denote the maps induced by inclusion. The inclusions of $W'$ are both proper maps and so the homomorphisms $\alpha$ and $\beta_{t}$ are indeed well-defined.

For $t \in [0, 1]$ and $j = \{0, \dots, p\}$ define $V^{t}_{j}$ by
\begin{equation} \label{equation: induced subspaces (n-1)-surgery}
\beta^{t}(\alpha^{-1}(V_{j}|_{(a_{0}-\varepsilon_{0}, a_{p}+\varepsilon_{p})})) = V^{t}_{j} \; \leq \; H^{\cpt}_{n+1}(\mathcal{K}^{t}_{e_{i}, \ell_{i}}(W, \ell_{W})).
\end{equation}

\begin{proposition} \label{proposition: continuity of Kappa n-1}
The above construction defines a (continuous) map
\[[0,1] \times \mb D_{p,0}^{\mathcal L,l} \longrightarrow \bPsi_{\theta}^{\Delta}((a_{0}- \varepsilon_{0}, a_{p}+\varepsilon_{p})\times\R^{\infty})\]
\[(t,(W,\ell_W),V,(e,\ell)) \longmapsto (\mathcal{K}^{t}_{e_{i}, \ell_{i}}(W, \ell_W), V^{t}_{j}),\]
with initial value given by
$$(\mathcal{K}^{0}_{e_{i}, \ell_{i}}(W, \ell_{W}), V^{0}_{j}) =  \left(W|_{(a_{0}-\varepsilon_{0}, a_{p}+\varepsilon_{p})}, \ell_W|_{(a_{0}-\varepsilon_{0}, a_{p}+\varepsilon_{p})}, V_{j}|_{(a_{0}-\varepsilon_{0}, \; a_{p}+\varepsilon_{p})}\right).$$
\end{proposition}

For the verification we need:

\begin{lemma}\label{alpha surjective}
The map $\alpha: H^{\cpt}_{n+1}(W') \longrightarrow H^{\cpt}_{n+1}(W|_{[a_0-\varepsilon_0, a_p+\varepsilon_p]})$ is an isomorphism in the case that $l < n-1$, and is surjective in the case that $l = n-1$.
\end{lemma}

\begin{proof}
Consider the exact sequence on $H^{\cpt}_{*}$ associated to the pair $(W, W')$: 
$$\xymatrix{\cdots \ar[r] & H^{\cpt}_{n+2}(W, W') \ar[r]^{\partial} & H^{\cpt}_{n+1}(W') \ar[r]^{\alpha} &  H^{\cpt}_{n+1}(W) \ar[r] & H^{\cpt}_{n+1}(W, W') \ar[r] &  \cdots}$$
Let $P_{i}$ and $P^{\partial}_{i}$ denote the manifolds
\begin{align*}
P_{i} & =  \Lambda_{i}\times(a_{i}-\varepsilon_{i}, a_{p}+\varepsilon_{p})\times S^{l}\times\R^{2n-l}, \\
 P^{\partial}_{i} & = \Lambda_{i}\times(a_{i}-\varepsilon_{i}, a_{p}+\varepsilon_{p})\times S^{l}\times(\R^{2n-l}\setminus \Int(D^{2n-l})).
 \end{align*} 
A simple calculation gives
\[H^{\cpt}_{k}(P_{i}, \; P^{\partial}_{i}) = 0  \quad \text{for all $k < 2n-l+1$.}\]
and via the embedding $e_i$ we have
\[H^{\cpt}_{k}(W, W') \; \cong \; H^{\cpt}_{k}(P_{i}, \; P^{\partial}_{i}) \quad \text{for all $k$.}\]
by excision. Using this equation in the exact sequence gives the claim.
\end{proof}

\begin{proof}[Proof of Proposition \ref{proposition: continuity of Kappa n-1}]
The claim of continuity follows immediately from the family $\mathcal K^t$ being locally generated by vector fields (see Definition \ref{defn: locally generated by vector fields}), which is readily checked from the construction (and is in fact needed to check that the underlying family of manifolds is continuous in \cite{GRW 14}) and Proposition \ref{proposition: continuity of the family}. The manifold part of the initial condition is immediate from the construction of $\mathcal K^t$ (it follows from \cite[Proposition 4.2 (i)]{GRW 14}) and by definition of $V^0_j$ we have 
$$V^{0}_{j} = \beta^{0}(\alpha^{-1}(V_{j}|_{(a_{0}-\varepsilon_{0}, a_{p}+\varepsilon_{p})})).$$ 
The maps $\beta^{0}$ and $\alpha$ agree and so $V^{0}_{j} = \alpha(\alpha^{-1}(V_{j}|_{(a_{0}-\varepsilon_{0}, a_{p}+\varepsilon_{p})}))$.
To prove that 
$$V^{0}_{j} =  V_{j}|_{(a_{0}-\varepsilon_{0}, \; a_{p}+\varepsilon_{p})},$$ it will therefore suffice to show that $\alpha: H_{n+1}^{\cpt}(W') \longrightarrow H^{\cpt}_{n+1}(W|_{(a_{0}-\varepsilon_{0}, a_{p}+\varepsilon_{p})})$ maps surjectively onto the subspace $V_{j}|_{(a_{0}-\varepsilon_{0}, a_{p}+\varepsilon_{p})}$. This follows from the lemma above. 
\end{proof}

We shall now verify that these subspaces $V_j^t$ indeed let us define the map $\mathcal F_p$ 
$$((a, \varepsilon, W, \ell_{W}, V), e, \ell) \in \mb{D}^{\mathcal{L}, l-1}_{p, 0}.$$ 
Fix $i \in \{0, \dots, p\}$ and let $(W^{i}_{t}, \ell^{i}_{t})$ denote the family of $\theta$-manifolds 
$\mathcal{K}^{t}_{e_{i}, \ell_{i}}(W, \ell_{W})$. For each $j = 0, \dots, p$  thus
$$V^{t}_{j} \leq H^{\cpt}_{n+1}(W^{i}_{t}).$$ 
Finally, let us denote by $h: W^{i}_{t} \longrightarrow \R$ the \textit{height function} on $W^{i}_{t}$ given by projecting $W^{i}_{t} \subset (a_{0}-\varepsilon_{0}, a_{p}+\varepsilon_{p})\times\R^{\infty-1}$ onto the first coordinate of the ambient space and abuse notation by setting
\[W := W|_{(a_{0}- \varepsilon_{0}, a_{p}+\varepsilon_{p})}.\] 

\begin{lemma} \label{lemma: commutativity of restrictions}
Let $c \in \cup_{k=0}^{p}(a_{k}-\varepsilon_{k}, a_{k}+\varepsilon_{k})$ be a regular value for the height function $h: W^{i}_{t} \longrightarrow \R$. Let 
$$\xymatrix{H_{n}(W|_{c})  && H_{n}(W'|_{c}) \ar[ll]_{\alpha_{c}}  \ar[rr]^{\beta^{t}_{c}} && H_{n}(W^{i}_{t}|_{c})}$$
denote the maps induced by inclusion. Then for any $j = 0, \dots, p$, the two subspaces 
$$V^{t}_{j}|_{c} \; \leq \; H_{n}(W^{i}_{t}|_{c}) \quad \text{and} \quad  \beta^{t}_{c}(\alpha^{-1}_{c}(V_{j}|_{c})) \; \leq \; H_{n}(W^{i}_{t}|_{c})$$
are equal.
\end{lemma}

\begin{proof}
Let 
$$\pi_{c}: H^{\cpt}_{n+1}(W) \longrightarrow H_{n}(W|_{c}) \quad \text{and} \quad \pi'_{c}: H^{\cpt}_{n+1}(W') \longrightarrow H_{n}(W'|_{c})$$
denote the restriction maps. To prove the lemma it will suffice to show that 
$$\alpha^{-1}_{c}(\pi_{c}(V_{j})) = \pi'_{c}(\alpha^{-1}(V_{j})):$$ 
The result is then immediate from the commutativity of the diagram
$$\xymatrix{H^{\cpt}_{n+1}(W') \ar[d]^{\pi'_{c}} \ar[rr]^{\beta^{t}} && H^{\cpt}_{n+1}(W^{i}_{t})  \ar[d] \\
H_{n}(W'|_{c}) \ar[rr]^{\beta^{t}_{c}}&& H_{n}(W^{i}_{t}|_{c}). }$$
To show the equality $\alpha^{-1}_{c}(\pi_{c}(V_{j})) = \pi'_{c}(\alpha^{-1}(V_{j}))$ we need to make some calculations. Recall from Lemma \ref{alpha surjective}, that $\alpha\colon H^{\cpt}_{n+1}(W') \longrightarrow H^{\cpt}_{n+1}(W)$ is an isomorphism $l<n-1$ and surjective when $l = n-1$. The verification of the equality $\alpha^{-1}_{c}(\pi_{c}(V_{j})) = \pi'_{c}(\alpha^{-1}(V_{j}))$ breaks down into two cases: 

\textit{Case 1:} Suppose $l < n-1$. We desire to show that $\alpha_{c}: H_{n}(W'|_{c}) \longrightarrow H_{n}(W|_{c})$
is an isomorphism. With this, the equality $\alpha^{-1}_{c}(\pi_{c}(V_{j})) = \pi'_{c}(\alpha^{-1}(V_{j}))$ will follow from the commutative diagram
$$\xymatrix{H^{\cpt}_{n+1}(W) \ar[d]^{\pi_{c}} \ar[rr]_{\cong}^{\alpha^{-1}} && H^{\cpt}_{n+1}(W')  \ar[d]^{\pi'_{c}} \\
H_{n}(W|_{c}) \ar[rr]^{\alpha^{-1}_{c}}_{\cong} && H_{n}(W'|_{c}).}$$
To prove that $\alpha_{c}$ is an isomorphism, we need to analyze the pair $(W|_{c}, W'|_{c})$. This pair takes on two forms depending on whether or not $c$ is contained in the interval $(a_{i}-\varepsilon_{i}, a_{p}+\varepsilon_{p})$. 

Let us first suppose that $c \in (a_{i}-\varepsilon_{i}, a_{p}+\varepsilon_{p})$. In this case we have
\begin{equation} \label{equation: level set of P}
\begin{aligned}
P_{i}|_{c} &= \Lambda_{i}\times\{c\}\times S^{l}\times\R^{2n-l}, \\
P^{\partial}_{i}|_{c} & = \Lambda_{i}\times\{c\}\times S^{l}\times(\R^{2n-l}\setminus \Int(D^{2n-l})),
\end{aligned}
\end{equation}
where $P$ and $P^{\partial}$ are from (\ref{equation: defn of P}). Since $l < n-1$, it follows that $H_{k}(P_{i}|_{c}, P^{\partial}_{i}|_{c}) = 0$ for all $k \leq n+1$. Excision for the pair $(W|_{c}, W'|_{c})$ yields
$$H_{k}(W|_{c}, W'|_{c}) \cong H_{k}(P|_{c}, P^{\partial}|_{c}) \quad \text{for all $k$,}$$
and thus we obtain 
$$H_{n+1}(W|_{c}, W'|_{c}) \quad \text{for all $k \leq n+1$.}$$
From the exact sequence associated to $(W|_{c}, W'|_{c})$ it follows that
$$\alpha_{c}: H_{n}(W'|_{c}) \stackrel{\cong} \longrightarrow H_{n}(W|_{c})$$
is an isomorphism whenever $c \in  (a_{i}-\varepsilon_{i}, a_{p}+\varepsilon_{p})$ (assuming $l < n-1$). 

For $c \notin  (a_{i}-\varepsilon_{i}, a_{p}+\varepsilon_{p})$, we have $W|_c = W'|_c$ and $\alpha_c$ is the identity so there is nothing to show.

\textit{Case 2:} Suppose that $l = n-1$. In this case the maps $\alpha$ and $\alpha_{c}$ are not necessarily isomorphisms and so we cannot employ the same argument used above. Consider the commutative diagram 
\[\xymatrix{ 0 & H^{\cpt}_{n+1}(W) \ar[d]_-{\pi_{c}}  \ar[l] && H^{\cpt}_{n+1}(W') \ar[ll]_-{\alpha} \ar[d]^-{\pi'_{c}} && H^{\cpt}_{n+2}(W, W') \ar[ll]_-{\partial} \ar[d]^{\bar{\pi}_{c}} \\
0 & H_{n}(W|_{c}) \ar[l]  &&  H_{n}(W'|_{c}) \ar[ll]_-{\alpha_{c}} && H_{n+1}(W|_{c}, W'|_{c}) \ar[ll]_-{\partial_{c}}}\]
which has exact rows. To establish $\alpha^{-1}_{c}(\pi_{c}(V_{j})) = \pi'_{c}(\alpha^{-1}(V_{j}))$, it will suffice to prove that the right-vertical map $\bar{\pi}_{c}$ is surjective: Indeed, the equality $\alpha^{-1}_{c}(\pi_{c}(V_{j})) = \pi'_{c}(\alpha^{-1}(V_{j}))$ can then be verified through a simple diagram chase. The map $\bar{\pi}_{c}$ takes on two forms depending on whether or not $c$ is contained in the interval $(a_{i}-\varepsilon_{i}, a_{p}+\varepsilon_{p})$. 

So assume that $c \in (a_{i}-\varepsilon_{i}, a_{p}+\varepsilon_{p})$. By (\ref{equation: defn of P}) and (\ref{equation: level set of P}) it follows that the restriction map 
$$H^{\cpt}_{n+2}(P_{i}, P^{\partial}_{i}) \longrightarrow H^{\cpt}_{n+1}(P_{i}|_{c}, P^{\partial}_{i}|_{c})$$
is an isomorphism. By the commutativity of the diagram 
$$\xymatrix{H^{\cpt}_{n+2}(W, W') \ar[r]^{\cong} \ar[d]^{\bar{\pi}_{c}} & H_{n+2}(P_{i}, P^{\partial}_{i}) \ar[d]^{\cong} \\
H_{n+1}(W|_{c}, W'|_{c}) \ar[r]^{\cong} & H_{n+1}(P_{i}|_{c}, P_{i}^{\partial}|_{c})}$$
it follows that $\bar{\pi}_{c}$ is an isomorphism, and hence surjective. This establishes the first case. 

For $c \notin (a_{i}-\varepsilon_{i}, a_{p}+\varepsilon_{p})$ we again have $W'|_{c} = W|_{c}$ making $\bar{\pi}_{c}$ is surjective since its target vanishes.
\end{proof}

The next proposition requires the use of Lemma \ref{lemma: commutativity of restrictions} and the results of Section \ref{section: preliminaries on surgery}. 

\begin{proposition} \label{proposition: induced lagrangians}
Fix $j \in \{0, \dots, p\}$. If $c \in (a_{j}-\tfrac{1}{2}\varepsilon_{j}, a_{j}+\tfrac{1}{2}\varepsilon_{j})$ is a regular value for the height function $h$, 
then the submodule $V_{j}^{t}|_{c} \leq H_{n}(W^{i}_{t}|_{c})$ is a Lagrangian subspace. Furthermore, for all $k = 0, \dots, p$, we have $V_{k}^{t}|_{c} \leq V_{j}^{t}|_{c}$.
\end{proposition}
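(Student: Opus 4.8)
The plan is to reduce everything to Lemma~\ref{lemma: commutativity of restrictions} and the surgery-theoretic results of Section~\ref{section: preliminaries on surgery}. First, applying Lemma~\ref{lemma: commutativity of restrictions} at the level $c$, for every $k = 0,\dots,p$ we have
$$
V^t_k|_c \;=\; \beta^t_c\big(\alpha^{-1}_c(V_k|_c)\big) \;\le\; H_n(W^i_t|_c),
$$
where $\alpha_c\colon H_n(W'|_c) \to H_n(W|_c)$ and $\beta^t_c\colon H_n(W'|_c) \to H_n(W^i_t|_c)$ are the inclusion-induced maps. So it is enough to analyse the subspaces $V_k|_c$ and the pair $(\alpha_c,\beta^t_c)$.

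Next I would record the structure of the $V_k|_c$. Because $c \in (a_j-\tfrac{1}{2}\varepsilon_j, a_j+\tfrac{1}{2}\varepsilon_j) \subset (a_j-\varepsilon_j, a_j+\varepsilon_j)$ and $(a,\varepsilon,W,\ell_W) \in \mb{C}_p$ is cylindrical over the latter interval, translation along the cylinder identifies $(W|_c, V_k|_c)$ with $(W|_{a_j}, V_k|_{a_j})$ compatibly with the restriction map $j_!$. By condition (ii) of Definition~\ref{defn: basic lagrangian cob cat}, $V_j|_{a_j}$ is a Lagrangian subspace of $H_n(W|_{a_j})$ and $V_k|_{a_j} \le V_j|_{a_j}$ for all $k \ne j$; consequently $V_j|_c \le H_n(W|_c)$ is Lagrangian and $V_k|_c \le V_j|_c$ for every $k$.

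Finally I would identify $(\alpha_c,\beta^t_c)$ with the surgery maps of Section~\ref{section: preliminaries on surgery}. By Proposition~\ref{proposition: observation 1 about glued in move n-2} there are two cases. If $W^i_t|_c \cong W|_c$ rel $W'|_c$ then $\alpha_c$ and $\beta^t_c$ are compatible isomorphisms and there is nothing to prove. Otherwise $W^i_t|_c$ is obtained from the closed $2n$-manifold $W|_c$ by a finite, disjoint collection of surgeries of degree $l$ supported away from $W'|_c$ (here $W|_c$ is compact; it is moreover oriented when $l = n-1$, since $B$ is then $(n-1)$-connected, and Proposition~\ref{proposition: below midd dim k surgery} does not need orientation). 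Carrying these surgeries out one sphere at a time, and using the excision identifications from the proof of Lemma~\ref{lemma: commutativity of restrictions}, restricted to the slice $c$, to see that each step is modelled by the pair $(S^l \times D^{2n-l},\, S^l \times S^{2n-l-1})$ exactly as in \eqref{equation: excision homomorphisms}, one checks that the operation $L \mapsto \beta^t_c(\alpha^{-1}_c(L))$ is the iterate of the operation $L \mapsto \widetilde{L}$ from Section~\ref{section: preliminaries on surgery}. Since $l \le n-1$, Proposition~\ref{proposition: below midd dim k surgery} (for $l \le n-2$) and Proposition~\ref{proposition: lagrangian n-1 surgery} (for $l = n-1$) show this operation sends Lagrangians to Lagrangians; taking $L = V_j|_c$ yields that $V^t_j|_c$ is Lagrangian. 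The inclusion $V^t_k|_c \le V^t_j|_c$ is then immediate from $V_k|_c \le V_j|_c$ together with the formula $V^t_k|_c = \beta^t_c(\alpha^{-1}_c(V_k|_c))$, since $\alpha^{-1}_c$ and $\beta^t_c$ are monotone for inclusion. The main obstacle is this last identification: making precise that deleting the image of $e_i$ from $W|_c$ reproduces, up to the relevant deformation retractions, the surgery complement $M'$ of Section~\ref{section: preliminaries on surgery}, and that the disjoint multi-sphere surgery may legitimately be decomposed into single surgeries to each of which Proposition~\ref{proposition: below midd dim k surgery} or \ref{proposition: lagrangian n-1 surgery} applies with the degree $l \le n-1$ unchanged.
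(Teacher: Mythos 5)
Your proposal follows essentially the same route as the paper: reduce via Lemma \ref{lemma: commutativity of restrictions} to the claim that $\beta^t_c(\alpha_c^{-1}(V_j|_c))$ is Lagrangian, then split along the two scenarios of Proposition \ref{proposition: observation 1 about glued in move n-2} and invoke Propositions \ref{proposition: below midd dim k surgery} and \ref{proposition: lagrangian n-1 surgery} in the nontrivial case. Your cylindricity remark (identifying $(W|_c, V_k|_c)$ with $(W|_{a_j}, V_k|_{a_j})$ by translation) is a helpful explicit justification of why $V_j|_c$ is Lagrangian, which the paper leaves implicit.

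One imprecision worth flagging is in your Case (a). You assert that ``$\alpha_c$ and $\beta^t_c$ are compatible isomorphisms and there is nothing to prove.'' But when $l = n-1$, $\alpha_c \colon H_n(W'|_c) \to H_n(W|_c)$ need not be an isomorphism: the relative group $H_n(W|_c, W'|_c)$ can be nonzero (it is a sum of copies of $H_n(S^{n-1}\times D^{n+1}, S^{n-1}\times S^n) \cong \Z$). The diffeomorphism $\varphi$ rel $W'|_c$ gives $\beta^t_c = \varphi \circ \alpha_c$, so $\beta^t_c(\alpha_c^{-1}(V_j|_c)) = \varphi(V_j|_c \cap \operatorname{im}\alpha_c)$, and to conclude that this equals $\varphi(V_j|_c)$ you need the extra fact that $V_j|_c \subseteq \operatorname{im}(\alpha_c)$. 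This is exactly what the paper supplies: the surjectivity of $\alpha \colon H^{\mathrm{cpt}}_{n+1}(W') \to H^{\mathrm{cpt}}_{n+1}(W)$ established in the proof of Lemma \ref{lemma: commutativity of restrictions} (equation \eqref{equation: alpha surjective}) descends, via commutativity of the restriction square, to $\alpha_c$ mapping onto $V_j|_c$. Once that is observed, the rest of your argument — including the technical point you raise at the end about decomposing multi-sphere surgery into single surgeries, which is handled by disjointness of the embeddings and the excision computations in Lemma \ref{lemma: commutativity of restrictions} — goes through as you expect.
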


\begin{proof}
Let $c \in (a_{j}-\tfrac{1}{2}\varepsilon_{j}, a_{j}+\tfrac{1}{2}\varepsilon_{j})$ is a regular value for the height function $h$.
Proving that $V_{j}^{t}|_{c}$ is Lagrangian breaks down into two cases depending on the form the level set $W^{i}_{t}|_{c}$ takes: 
 $c$ is automatically a regular value for $h: W \longrightarrow \R$ and by design (compare \cite[Proposition 4.2 (iv)]{GRW 14}) either
\begin{enumerate} 
\item[(a)] there is a diffeomorphism $W^{i}_{t}|_{c} \cong W|_{c}, \; \rel \; W'|_{c}$, or
\item[(b)] $W^{i}_{t}|_{c}$ is obtained from $W|_{c}$ by a collection of $\theta$-surgeries of degree $l$.
\end{enumerate}

\textit{Case (a):} Since the diffeomorphism is relative to $W'$ we obtain a commutative diagram 
$$\xymatrix{H_{n}(W|_{c}) \ar[drr]^{\cong}_{\varphi} && H_{n}(W'|_{c}) \ar[ll]_{\alpha_{c}} \ar[d]^{\beta^{t}_{c}} \\
&& H_{n}(W^{i}_{t}|_{c}),}$$
where the diagonal map $\varphi$ is the isomorphism induced by the diffeomorphism 
$$W^{i}_{t}|_{c} \cong W|_{c}, \; \rel \; W'|_{c}.$$
Since $\varphi$ is an isomorphism that preserves the intersection form on $H_{n}(W^{i}_{t}|_{c})$, it follows that 
$$\varphi(V_{j}|_{c}) \leq H_{n}(W^{i}_{t}|_{c})$$ 
is a Lagrangian subspace. Now, $\alpha_{c}$ maps surjectively onto the subspace $V_{j}|_{c}$. This fact together with commutativity of the above diagram implies that$\beta^{t}_{c}(\alpha_{c}^{-1}(V_{j}|_{c})) = \varphi(V_{j}|_{c}).$ By Lemma \ref{lemma: commutativity of restrictions} we have $\beta^{t}_{c}(\alpha_{c}^{-1}(V_{j}|_{c})) = V^{t}_{j}|_{c}$, and thus $V^{t}_{j}|_{c}$ is a Lagrangian as well. 

\textit{Case (b):} In this case, Theorem \ref{theorem: transport of lagrangian through surgery} implies that the subspace $\beta^{t}_{c}(\alpha_{c}^{-1}(V_{j}|_{c})) \leq H_{n}(W^{i}_{t}|_{c})$ is a Lagrangian. Again, by Lemma \ref{lemma: commutativity of restrictions} we have 
$$\beta^{t}_{c}(\alpha_{c}^{-1}(V_{j}|_{c})) = V^{t}_{j}|_{c},$$ 
and thus $V^{t}_{j}|_{c}$ is Lagrangian. 

To obtain the addendum note that by definition of $\mb{X}^{\mathcal{L}, l}_{\bullet}$, we have $V_{k}|_{c} \leq V_{j}|_{c}$. Thus, for all $t$ we have 
$$\beta^{t}_{c}(\alpha_{c}^{-1}(V_{k}|_{c})) \leq \beta^{t}_{c}(\alpha_{c}^{-1}(V_{j}|_{c})).$$
Another application of Lemma \ref{lemma: commutativity of restrictions} finishes the proof.
\end{proof}

With these properties established we can define
\[\mathcal F_p \colon [0,1]^{p+1} \times \mb{D}^{\mathcal L,l}_{p,0} \longrightarrow \mb{X}_p^{\mathcal L,l-1}\]
by
\[(t,(a,\epsilon,(W,\ell_W),V),(e,\ell)) \longmapsto (a,\epsilon/2,\mathcal K^t_{e,\ell}(W,\ell_W),V^t),\]
where $V^t$ is obtained by iterating the above construction just as $\mathcal K^t_e$ is iteratively built from the various $\mathcal K^t_{e_i}$'s. 

The proof of Theorem \ref{theorem: surgery on objects below middle dim}, which is the goal of this entire section, may now be concluded just as that of \cite[Theorem 4.1]{GRW 14} is in \cite[Sections 3.3 \& 4.4]{GRW 14} upon replacing $\mathcal S_p$ by $\mathcal F_p$: First, the `perform surgery'-map $\mathcal F(1,-)\colon |\mb{D}^{\mathcal L,l}_{\bullet,0}| \rightarrow |\mb{X}^{\mathcal L,l-1}_\bullet|$ factors through the inclusion $i$ of $|\mb{X}^{\mathcal L,l}_\bullet|$ into the target. Secondly, the composition of $\mathcal F(1,-)\colon |\mb{D}^{\mathcal L,l}_{\bullet,0}| \rightarrow |\mb{X}^{\mathcal L,l}_\bullet|$ with the inclusion $|\mb{D}^{\mathcal L,l}_\bullet| \rightarrow |\mb{D}^{\mathcal L,l}_{\bullet,0}|$ given by the empty set of surgery data is a weak equivalence by Proposition \ref{proposition: equivalence to flexible model} and therefore $\mathcal F(1,-)$ is surjective on homotopy groups. Thirdly, regarded as a map $|\mb{D}^{\mathcal L,l}_{\bullet,0}| \rightarrow |\mb{X}^{\mathcal L,l-1}_\bullet|$, $\mathcal F(1,-)$ is homotopic to $\mathcal F(0,-)$, which is just forgetful map $|\mb{D}^{\mathcal L,l}_{\bullet,0}| \rightarrow |\mb{X}^{\mathcal L,l-1}_\bullet|$, and thus a weak equivalence by Theorem \ref{theorem: contractibility of space of surgery data (n-1)-conn}. This means that $\mathcal F(1,-)\colon |\mb{D}^{\mathcal L,l}_{\bullet,0}| \rightarrow |\mb{X}^{\mathcal L,l}_\bullet|$ is also injective on homotopy groups and so finally $i$ has to be weak equivalence as well.


\section{Surgery on Objects in the Middle Dimension} \label{section: surgery on objects in the middle dimension}

In this section and the next we prove Theorem \ref{theorem: surgery on objects in degree n} which asserts that there are weak homotopy equivalences
$B\Cob^{\mathcal{L}, n}_{\theta}\; \stackrel{\simeq} \longrightarrow \; B\Cob^{\mathcal{L}, n-1}_{\theta}$ whenever $n \geq 4$ and $n \neq 7$, the tangential structure $\theta: B \longrightarrow BO(2n+1)\langle n\rangle$ is weakly once-stable and $B$ $n$-connected. It is structured similarly to Section \ref{section: surgery on objects below the middle dimension}, but the Lagrangians really come into play now, so many of the geometric arguments are necessariy different from those of \cite{GRW 14}. The present section essentially contains the formal outline and those statements which do immediately follow from \cite[Section 5]{GRW 14}, which we again mimick closely, and some necessary homological arguments, while the new geometric arguments are relegated to the next section.

\subsection{A semi-simplicial resolution} \label{subsection: resolution middle dimension}
We want to consider a semi-simplicial space $Y^n_\bullet(x)$ of middle dimensional surgery data on some element $x \in \mb{D}^{\mathcal L,n-1}_\bullet$ \`a la \cite[Definition 5.13]{GRW 14}. There are two main differences to be taken into account: The minor one is that the entirety of \cite[Section 5]{GRW 14} is written for the case of a $2n$-cobordism and $n-1$-surgeries. Adopting the construction for $d=2n+1$ and $n$-surgeries, however, requires nothing more than careful remixing of the numbers $n-1,n$ and $n+1$ that appear as sub- and superscripts. After these adaptions the major point is that we have to add a condition to the definition of $Y^n_\bullet(x)$ taking the Lagrangian on $x$ into account. This condition is in fact the entire raison d'\^etre for carrying the Langrangians through the surgery process. 

To keep the section somewhat readable we decided against including an exhaustive list of the numerical changes and only indicate the most pertinent ones. In that spirit we alter the definition of $Y^n_q(x)$ from \cite[Definition 5.13]{GRW 14} for $x = (a,\varepsilon,(W,\ell_W),V) \in \mb{D}^{\mathcal L,n-1}_p$ as follows: As we want to perform $n$-surgeries, the embedding $e$ is to be of the form
\[\Lambda \times \mathbb R \times (-6,-2) \times \mathbb R^n \times D^{n+1} \longrightarrow \mathbb R \times (0,1) \times (-1,1)^{\infty-1}.\]
Conditions i) to iv) require no further change. Condition v) should be altered to condition v') below, but the main distinction with the even-dimensional case is the inclusion of condition vi'), the meaning of which is explained in the next lemma:
\begin{itemize}
\item[v')] The manifold arising from $M_i = W_{|a_i}$ by surgery along $\partial e_i$ is $n$-connected.
\item[vi')] The subspace $$\sum_{i=0}^{p}V_{i} \leq H^{\cpt}_{n+1}(W|_{(a_{0}-\varepsilon_{0}, a_{p}+\varepsilon_{p})})$$
is contained in the image of 
$$\alpha: H^{\cpt}_{n+1}(W') \longrightarrow H^{\cpt}_{n+1}(W|_{(a_{0}-\varepsilon_{0}, a_{p}+\varepsilon_{p})}),$$
where $\alpha$ is induced by the inclusion $W' \subset W|_{(a_{0}-\varepsilon_{0}, a_{p}+\varepsilon_{p})}$ for $W' = W|_{(a_{0}-\varepsilon_{0}, a_{p}+\varepsilon_{p})}\setminus\Int(\Image(e))$.
\end{itemize}
Condition vi') will be crucial in proving that these complexes of surgery data have contractible realizations; this is false for the complexes of surgery data without Lagrangians. vi') is most easily thought of via its following consequence:

\begin{lemma} \label{remark: embedding contained in lagrangian}
We have $V_i|_{M_i} \leq \mathrm{im}(H_n(M_i') \rightarrow H_n(M_i))$. In particular, for each $\lambda \in \Lambda_{i,j}$, the homology class represented by the embedding 
$$\partial e^{\lambda}_{i,j}: \{\lambda\}\times\{0\}\times\{a_{i}\}\times\{0\}\times\partial D^{n+1} \; \longrightarrow \; M_{i}$$
(obtained by restricting $\partial e_{i}$) is contained in $V_{i}|_{M_{i}}$.
\end{lemma}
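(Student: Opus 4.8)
The plan is to exploit that a Lagrangian subspace is its own orthogonal complement. Fix $\lambda\in\Lambda_i$, put $M_i = W|_{a_i}$, write $x\in H_n(M_i)$ for the class represented by $\partial e^{\lambda}_i$, and choose a closed tubular neighbourhood $\phi\colon S^n\times D^n\hookrightarrow M_i$ of that embedded sphere with $x = [\phi|_{S^n\times\{0\}}]$, arranged so that $\phi(S^n\times\Int(D^n))$ is the open tube that $\partial e_i$ cuts out on $M_i$ near $\lambda$ (after the obvious reordering of factors and identification $\Int(D^n)\cong\R^n$). By definition of $\mb{D}^{\mathcal{L},n-1}_p$ --- more precisely Definition~\ref{defn: basic lagrangian cob cat}(ii)(a) --- $V_i|_{M_i} = V_i|_{a_i}$ is a Lagrangian subspace of $(H_n(M_i),\lambda_{M_i})$, hence $V_i|_{M_i} = (V_i|_{M_i})^{\perp}$. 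So it is enough to prove that $x$ is orthogonal to $V_i|_{M_i}$, i.e.\ $\lambda_{M_i}(x,w) = 0$ for every $w\in V_i|_{M_i}$.

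First I would convert this orthogonality into a disjointness statement. Let $M_i' = M_i\setminus\phi(S^n\times\Int(D^n))$. Lemma~\ref{lemma: intersection inclusion}, applied to $\phi$ with $k = n$, says that the inclusion-induced map $H_n(M_i)\to H_n(M_i,M_i')\cong\Z$ sends $y$ to $\lambda_{M_i}(x,y)$ times a generator. Since this map annihilates everything coming from $H_n(M_i')$, we conclude $\lambda_{M_i}(x,w) = 0$ whenever $w\in\Image\bigl(H_n(M_i')\to H_n(M_i)\bigr)$. The lemma is therefore reduced to the homological inclusion
\[
V_i|_{M_i}\ \subseteq\ \Image\bigl(H_n(M_i')\to H_n(M_i)\bigr).
\]
This is exactly where condition (iv) of Definition~\ref{defn: middle dimension resolution} is designed to help. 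Write $W = W|_{(a_0-\varepsilon_0,a_p+\varepsilon_p)}$ and $W' = W\setminus\Image(e)$ as there; condition (iv) gives, in particular, $V_i\le\Image\bigl(\alpha\colon H^{\cpt}_{n+1}(W')\to H^{\cpt}_{n+1}(W)\bigr)$. Because $e_i$ is height-preserving over the interval $(a_i-\varepsilon_i,a_i+\varepsilon_i)$ (Definition~\ref{defn: middle dimension resolution}(ii)), on a small slab $J = (a_i-\delta,a_i+\delta)$ around $a_i$ one has product descriptions $W|_J = J\times M_i$ and $W'|_J = J\times(W'|_{a_i})$ with $W'|_{a_i}\subseteq M_i'$. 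Restricting compactly supported classes to $J$, and using the cylindrical-ends identification $H^{\cpt}_{n+1}(J\times N)\cong H_n(N)$ from Section~\ref{section: homological preliminaries}, the slice-restriction map $j_!\colon H^{\cpt}_{n+1}(W)\to H_n(M_i)$ --- whose image on $V_i$ is precisely $V_i|_{M_i}$ --- becomes, on $\Image(\alpha)$, the inclusion-induced homomorphism $H_n(W'|_{a_i})\to H_n(M_i)$. Hence $V_i|_{M_i} = j_!(V_i)\subseteq\Image\bigl(H_n(W'|_{a_i})\to H_n(M_i)\bigr)\subseteq\Image\bigl(H_n(M_i')\to H_n(M_i)\bigr)$, which combined with the previous paragraph finishes the proof.

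The step I expect to require the most care is the passage in the last paragraph: that restriction to the slab $J$ carries the push-forward $\alpha$ along the closed inclusion $W'\hookrightarrow W$ to the push-forward along $J\times M_i'\hookrightarrow J\times M_i$ (equivalently, to $\mathrm{incl}_*\colon H_n(M_i')\to H_n(M_i)$). This is a base-change compatibility for the Borel--Moore-type groups $H^{\cpt}_{*}$ across the pull-back square formed by the open subset $W|_J$ and the closed subset $W'$ of $W$; I would check it directly from the definitions in Section~\ref{section: homological preliminaries}, picking a tubular neighbourhood of the slice $M_i\subset W$ that intersects $W'$ in a tubular neighbourhood of $W'|_{a_i}$ and unwinding the Lefschetz-duality description of $j_!$ from \eqref{equation: shreik homomorphism 2}, while being attentive to the noncompact ends of $W$ and $W'$ and to the boundary of $W'|_{a_i}$. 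Finally, I observe --- consistently with the parenthetical remark after Definition~\ref{defn: middle dimension resolution}(iv) --- that this argument uses only condition (iv), together with the Lagrangian hypothesis already built into $\mb{D}^{\mathcal{L},n-1}_p$; condition (v) plays no role in it.
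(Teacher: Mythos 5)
Your argument is correct and follows essentially the same route as the paper's proof: both apply Lemma~\ref{lemma: intersection inclusion} to reduce the claim to showing that $V_i|_{M_i}$ lies in the image of $H_n(M_i')\to H_n(M_i)$, deduce that inclusion from condition~(iv) of Definition~\ref{defn: middle dimension resolution}, and finish by using that the Lagrangian $V_i|_{M_i}$ is its own orthogonal complement. The only difference is one of exposition: the paper compresses the passage from the $H^{\cpt}_{n+1}$-level condition~(iv) to the slice-level statement into the phrase ``by condition~(iv)'', whereas you unwind it via the commutative square of restrictions $H^{\cpt}_{n+1}(W')\to H^{\cpt}_{n+1}(W)$ versus $H_n(W'|_{a_i})\to H_n(M_i)$ and the containment $W'|_{a_i}\subseteq M_i'$ --- the same commutativity that is used, e.g., in the proof of Lemma~\ref{lemma: commutativity of restrictions degree n}.
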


\begin{proof}
The first claim is trivial. By Lemma \ref{lemma: intersection inclusion} we have an exact sequence
$$\xymatrix{H_n(M_i') \ar[r] & H_n(M_i) \ar[rr]^-{\lambda(\cdot, \; [\partial e^{\lambda}_{i}])} && \mathbb Z}.$$
Therefore $[\partial e^{\lambda}_{i}]$ pairs trivially with the entire image of $H_n(M_i')$, in particular with $V_{i}|_{M_{i}}$, by condition vi'). Since $V_{i}|_{M_{i}}$ is a Langrangian we find $[\partial e^{\lambda}_{i}] \in V_{i}|_{M_{i}}$.
\end{proof}

Now, just as in \cite[Definition 5.13]{GRW 14} put
\[\mb{D}^{\mathcal{L}, n}_{p, q} = \{(x,y) \mid x \in \mb{D}^{\mathcal{L}, n-1}_{p}, y \in \mb{Y}^{n}_{q}(x)\}\]
augmented over $\mb{D}^{\mathcal{L}, n-1}_{p}$ by forgetting $y$. 

\begin{theorem} \label{theorem: contractibility of space of surgery data}
Let $n \geq 4, n \neq 7$ Then the maps
$$|\mb{D}^{\mathcal{L}, n}_{\bullet, 0}| \longrightarrow |\mb{D}^{\mathcal{L}, n}_{\bullet, \bullet}| \longrightarrow |\mb{D}^{\mathcal{L}, n-1}_{\bullet}|,$$
induced by inclusion of zero-simplices and the augmentation, respectively, are weak equivalences.
\end{theorem}

The claim for the second map will be taken up in the next section. Granting this for now, it follows that the former map is an equivalence from the argument provided at the beginning of \cite[Section 6.1]{GRW 14}, once the correction we spelled out in Lemma \ref{lemma: inclusion of zero simplices} is taken into account.

\subsection{A surgery move} \label{subsection: the surgery move degree n}
To conclude the proof of Theorem \ref{theorem: surgery on objects in degree n} assuming Theorem \ref{theorem: contractibility of space of surgery data} we proceed just as in the previous section by resurrecting the surgery move of \cite[Section 5.2]{GRW 14}. This requires another bout of index adjustments to obtain the family that gives the surgery move and then some homological calculations to verify that it interacts correctly with the homological data.

To start the former we follow \cite[Section 5.2]{GRW 14} and consider the submanifold 
\[K = \{(x,y) \in \mathbb R^{n+1} \times \mathbb R^{n+1} \mid |y|^2 = \rho(|x|^2-1)\}.\]
Using this a starting point the construction of \cite[Sections 4.4 \& 5.2]{GRW 14} we obtain for each $(a,\varepsilon,(W,\ell_W),V,(e,\ell)) \in \mb{D}^{\mathcal L,n-1}_{p,0}$ a continuous family of manifolds 
\[\mathcal K^t_{e,\ell}(W,\ell_W) \in \Psi_\theta((a_0-\varepsilon_0,a_p+\varepsilon_p) \times \mathbb R^\infty)\]
and from it we want to define maps
\[\mathcal F_p \colon [0,1]^{p+1} \times \mb D^{\mathcal L,n}_{p,0} \longrightarrow \mb X_p^{\mathcal L,n-1}\]
just as in \cite[Sections 4.4 \& 5.4]{GRW 14}. 

To this end we need to check that $\mathcal K^t_{e,\ell}(W,\ell_W)$ preserves the connectivity assumption on $W$, i.e. condition iii) of Definition \ref{defn: more flexible model} on the one hand and produce new homological data on $\mathcal K^t_{e,\ell}(W,\ell_W)$ from $V$ on the other. The former is handled by \cite[Proposition 5.12 (iii)]{GRW 14} whose claim is not affected by the change in $K$: The critical points of the Morse function arising in the proof are now of index $n$ and $n+1$,  and this increase cancels against the dimension increase from $2n$ to $2n+1$. 

To construct the homological data for $\mathcal K^t_{e,\ell}(W,\ell_W)$ we use the same formula as in (\ref{equation: induced subspaces (n-1)-surgery}): Given subspaces $V_{0}, \dots, V_{p} \leq H_{n+1}^{\cpt}(W|_{(a_{0}- \varepsilon_{0}, a_{p}+\varepsilon_{p})})$ we again set
\[\beta^{t}(\alpha^{-1}(V_{j}|_{(a_{0}-\varepsilon_{0}, a_{p}+\varepsilon_{p})})) = V^{t}_{j} \; \leq \; H^{\cpt}_{n+1}(\mathcal{K}^{t}_{e_{i}, \ell_{i}}(W, \ell_{W})),\]
where $W'$ again denotes the complement $W|_{(a_{0}- \varepsilon_{0}, a_{p}+\varepsilon_{p})}\setminus\Int(\Image(e_{i}))$ and 
$$\xymatrix{H^{\cpt}_{n+1}(W|_{(a_{0}- \varepsilon_{0}, a_{p}+\varepsilon_{p})})  && H_{n+1}^{\cpt}(W') \ar[ll]_-{\alpha} \ar[rr]^-{\beta^{t}} && H^{\cpt}_{n+1}(\mathcal{K}^{t}_{e_{i}, \ell_{i}}(W, \ell_{W}))}$$
denote the maps induced by inclusion. The following proposition is the analogue of Proposition \ref{proposition: continuity of Kappa n-1}. 

\begin{proposition} \label{proposition: continuity of Kappa n}
The above construction defines a (continuous) map
\[[0,1] \times \mb D_{p,0}^{\mathcal L,n} \longrightarrow \bPsi_{\theta}^{\Delta}((a_{0}- \varepsilon_{0}, a_{p}+\varepsilon_{p})\times\R^{\infty})\]
\[(t,(W,\ell_W),V,(e,\ell)) \longmapsto (\mathcal{K}^{t}_{e_{i}, \ell_{i}}(W, \ell_W), V^{t}_{j}),\]
with initial value given by
$$(\mathcal{K}^{0}_{e_{i}, \ell_{i}}(W, \; \ell_{W}), \; V^{0}_{j}) \; = \; \left(W|_{(a_{0}-\varepsilon_{0}, a_{p}+\varepsilon_{p})}, \; \ell|_{(a_{0}-\varepsilon_{0}, a_{p}+\varepsilon_{p})}, \; V_{j}|_{(a_{0}-\varepsilon_{0}, \; a_{p}+\varepsilon_{p})}\right).$$
\end{proposition}

\begin{proof} This is entirely analogous to Proposition \ref{proposition: continuity of Kappa n-1}, except that the surjectivity of $\alpha$ is directly implied by condition vi') from the definition of $\mb Y^n_\bullet$.
\end{proof}

To verify that the $V^t_j$ are indeed eligible subspace we again need:

\begin{lemma} \label{lemma: commutativity of restrictions degree n}
Let $c \in \cup_{k=0}^{p}(a_{k}-\varepsilon_{k}, a_{k}+\varepsilon_{k})$ be a regular value for the height function $h: W^{i}_{t} \longrightarrow \R$.
Let 
$$\xymatrix{H_{n}(W|_{c})  && H_{n}(W'|_{c}) \ar[ll]_{\alpha_{c}}  \ar[rr]^{\beta^{t}_{c}} && H_{n}(W^{i}_{t}|_{c})}$$
denote the maps induced by inclusion. Then for any $j = 0, \dots, p$, the two subspaces 
$$V^{t}_{j}|_{c} \; \leq \; H_{n}(W^{i}_{t}|_{c}) \quad \text{and} \quad  \beta^{t}_{c}(\alpha^{-1}_{c}(V_{j}|_{c})) \; \leq \; H_{n}(W^{i}_{t}|_{c})$$
are equal.
\end{lemma}

\begin{proof} 
Consider the commutative diagram 
\begin{equation} \label{equation: restriction square n}
\xymatrix{H^{\cpt}_{n+1}(W) \ar[d]^{\pi_{c}} & H^{\cpt}_{n+1}(W') \ar[l]_-{\alpha}  \ar[d]^{\pi'_{c}} \\
H_{n}(W|_{c}) & H_{n}(W'|_{c}) \ar[l]_-{\alpha_{c}}.}
\end{equation}
As in the proof of Lemma \ref{lemma: commutativity of restrictions}, it will suffice to prove the equality $\alpha^{-1}_{c}(\pi_{c}(V_{j})) = \pi'_{c}(\alpha^{-1}(V_{j}))$. As in the proof of Lemma \ref{lemma: commutativity of restrictions}, let $P_{i}$ and $P^{\partial}_{i}$ be the manifolds
\begin{align*}
P_{i} & =  \Lambda_{i}\times(a_{i}-\varepsilon_{i}, a_{p}+\varepsilon_{p})\times S^{n}\times\R^{n}, \\
P^{\partial}_{i} & = \Lambda_{i}\times(a_{i}-\varepsilon_{i}, a_{p}+\varepsilon_{p})\times S^{n}\times(\R^{n}\setminus \Int(D^{n})).
\end{align*} 
As before, we have excision isomorphisms 
$$\begin{aligned}
H^{\cpt}_{k}(W, W') &\cong H^{\cpt}_{k}(P_{i}, P^{\partial}_{i}) \\
H_{k}(W|_{c}, W'|_{c}) &\cong 
\begin{cases}
H^{\cpt}_{k}(P_{i}|_{c}, P^{\partial}_{i}|_{c}) &\quad \text{if $c \in (a_{i}-\varepsilon_{i}, a_{p}+\varepsilon_{p})$,}\\
0 &\quad \text{if $c \notin (a_{i}-\varepsilon_{i}, a_{p}+\varepsilon_{p})$.}
\end{cases}
\end{aligned}$$
It follows that $H^{\cpt}_{*}(W, W')$ is trivial in all degrees other than $(n+1)$ and that $ H_{k}(W|_{c}, W'|_{c})$ is trivial in all degrees other than $n$ (if $c \notin (a_{i}-\varepsilon_{i}, a_{p}+\varepsilon_{p})$ then $H_{k}(W|_{c}, W'|_{c})$ is trivial in all degrees). Using this together with the long exact sequences on $H^{\cpt}_{*}$ and $H_{*}$ associated to the pairs $(W, W')$ and $(W|_{c}, W'|_{c})$, it follows that both maps $\alpha$ and $\alpha_{c}$ are injective. By condition vi'), every element of $V_{j}$ lies in the image of $\alpha$. Using these facts, a simple diagram chase in (\ref{equation: restriction square n}) proves that $\alpha^{-1}_{c}(\pi_{c}(V_{j})) = \pi'_{c}(\alpha^{-1}(V_{j}))$.
\end{proof}

The next proposition is the analogue of Proposition \ref{proposition: induced lagrangians}.

\begin{proposition}
Let $j = 0, \dots, p$. If $c \in (a_{j}-\tfrac{1}{2}\varepsilon_{j}, a_{j}+\tfrac{1}{2}\varepsilon_{j})$ is a regular value for the height function $h$, 
then the submodule $V^{t}_{j}|_{c} \leq H_{n}(W^{i}_{t}|_{c})$ is a Lagrangian subspace. Furthermore, $V^{t}_{k}|_{c} \leq V^{t}_{j}|_{c}$ for $k = 0, \dots, p$.
\end{proposition}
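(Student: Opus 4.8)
The plan is to imitate the proof of Proposition~\ref{proposition: induced lagrangians} essentially verbatim, with two differences: Lemma~\ref{lemma: commutativity of restrictions degree n} replaces Lemma~\ref{lemma: commutativity of restrictions}, and the role played there by the surgery propositions of Section~\ref{section: preliminaries on surgery} is here played by the trivial remark that the middle homology of a homotopy sphere vanishes. Fix $j$ and a regular value $c \in (a_{j}-\tfrac{1}{2}\varepsilon_{j}, a_{j}+\tfrac{1}{2}\varepsilon_{j})$ of $h\colon W^{i}_{t}\to\R$. By Proposition~\ref{proposition: observation 1 about glued in move n} (when $j\neq i$), or by the proposition recording the behaviour of the level sets $W^{i}_{t}|_{c}$ for $c$ in the $i$-th interval (when $j=i$), $c$ is also a regular value of $h\colon W\to\R$ and exactly one of the following holds: (a) there is a diffeomorphism $\varphi\colon W^{i}_{t}|_{c}\cong W|_{c}$ relative to $W'|_{c}$, or (b) $W^{i}_{t}|_{c}$ is $n$-connected. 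When $j\neq i$ only case (a) can occur.

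In case (a), since $\varphi$ fixes $W'|_{c}$ it fits into a commuting triangle with the inclusion-induced maps $\alpha_{c}\colon H_{n}(W'|_{c})\to H_{n}(W|_{c})$ and $\beta^{t}_{c}\colon H_{n}(W'|_{c})\to H_{n}(W^{i}_{t}|_{c})$, namely $\varphi\circ\alpha_{c}=\beta^{t}_{c}$. Being induced by a diffeomorphism compatible with the $\theta$-structures, $\varphi$ carries the quadratic form $(H_{n}(W|_{c}),\lambda,\mu)$ isomorphically onto $(H_{n}(W^{i}_{t}|_{c}),\lambda,\mu)$ (preservation of $\mu$ being the point addressed below), so $\varphi(V_{j}|_{c})$ is a Lagrangian subspace. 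On the other hand, part~(iv) of Definition~\ref{defn: middle dimension resolution} together with Lemma~\ref{lemma: commutativity of restrictions degree n} shows that $V_{j}|_{c}$ lies in the image of $\alpha_{c}$ and that $V^{t}_{j}|_{c}=\beta^{t}_{c}(\alpha_{c}^{-1}(V_{j}|_{c}))$; commutativity of the triangle then gives $V^{t}_{j}|_{c}=\varphi(V_{j}|_{c})$, which is Lagrangian. Since $V_{k}|_{c}\leq V_{j}|_{c}$ by the defining conditions on the tuples in $\mb{D}^{\mathcal{L},n-1}_{\bullet}$ (using cylindricity of $W$ near $a_{j}$), applying $\beta^{t}_{c}\circ\alpha_{c}^{-1}$ and invoking Lemma~\ref{lemma: commutativity of restrictions degree n} once more yields $V^{t}_{k}|_{c}\leq V^{t}_{j}|_{c}$.

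In case (b), which occurs only for $j=i$, the level set $W^{i}_{t}|_{c}$ is an $n$-connected $2n$-manifold, hence a homotopy sphere, so $H_{n}(W^{i}_{t}|_{c})=0$. Then $V^{t}_{j}|_{c}$ is the zero submodule, which is vacuously Lagrangian and trivially contains every $V^{t}_{k}|_{c}$, so both assertions hold. This disposes of all cases. The only step requiring genuine care is the claim in case~(a) that $\varphi$ preserves the quadratic refinement $\mu$ and not merely the intersection form $\lambda$: for even $n$ this is automatic, since $\mu$ is then determined by $\lambda$ (Remark~\ref{remark: refinement on even an symmetric form}), while for odd $n$ one must use that the diffeomorphism produced by the surgery move of Section~\ref{subsection: the surgery move degree n} is compatible with the $\theta$-structures, together with the naturality of $\mu$ under $\theta$-preserving maps established in Section~\ref{subsection: quadratic forms}. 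Everything else is a transcription of the corresponding steps in the proof of Proposition~\ref{proposition: induced lagrangians}; in fact the degree-$n$ situation is in this respect simpler, since case~(b) trivialises rather than requiring an analogue of Propositions~\ref{proposition: below midd dim k surgery} and~\ref{proposition: lagrangian n-1 surgery}.
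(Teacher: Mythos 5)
Your argument matches the paper's proof line for line: the same case split (using the proposition about level sets on the $i$-th interval to justify case (b), which the paper attributes somewhat loosely to Proposition \ref{proposition: observation 1 about glued in move n} alone), the same commutative triangle, the same invocation of condition (iv) of Definition \ref{defn: middle dimension resolution} and Lemma \ref{lemma: commutativity of restrictions degree n} to identify $V^t_j|_c = \varphi(V_j|_c)$ in case (a), and the same trivial conclusion in case (b). Your closing remark spelling out why $\varphi$ preserves $\mu$ (automatic for even $n$, requiring that the diffeomorphism be a $\theta$-diffeomorphism for odd $n$) makes explicit a point the paper leaves implicit by simply writing "$\theta$-diffeomorphism" in the diagram, but this is a clarification of the same argument rather than a different route.
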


\begin{proof} By design of the surgery move there are two cases depending on the form that the level set $W^{i}_{t}|_{c}$ takes. To see that $V^{t}_{j}|_{c}$ is indeed a Lagrangian we seperate the cases again: 

\textit{Case 1:} There is a diffeomorphism $W^{i}_{t}|_{c} \cong W|_{c}, \; \rel \; W'|_{c}$. This is entirely the same as Case (a) in Proposition \ref{proposition: induced lagrangians}, except that ${V_j}|_c$ lies in the image of $\alpha_c$ directly by condition vi'). The property that $V^{t}_{k}|_{c} \leq V^{t}_{j}|_{c}$ for $k = 0, \dots, p$ also follows just as in Proposition \ref{proposition: induced lagrangians} using Lemma \ref{lemma: commutativity of restrictions degree n} instead of \ref{lemma: commutativity of restrictions}. 

\textit{Case 2:} The manifold $W^{i}_{t}|_{c}$ is obtained from $W|_{c}$ by a collection of surgeries in degree $n$ and $W^{i}_{t}|_{c}$ is $n$-connected. 
It follows that $H_{n}(W^{i}_{t}|_{c}) = 0$ and thus $V^{t}_{j}|_{c}$ is automatically Lagrangian and the claimed containment is a trivial equality.
\end{proof}

With these proposition established the proof is again concluded just as in \cite[Sections 4.4 \& 5.4]{GRW 14} (outlined at the end of Section \ref{subsection: surgery move below middle dimension}).


\section{Contractibility of the Space of Surgery Data}  \label{section: contractibility n}
We finally prove Theorem \ref{theorem: contractibility of space of surgery data} which asserts that there are weak homotopy equivalences, 
$$|\mb{D}^{\mathcal{L}, n}_{\bullet, 0}| \stackrel{\simeq} \longrightarrow |\mb{D}^{\mathcal{L}, n}_{\bullet, \bullet}| \stackrel{\simeq} \longrightarrow |\mb{D}^{\mathcal{L}, n-1}_{\bullet}|,$$
where the first map is induced by the inclusion of zero-simplices and the second is induced by the augmentation.
As mentioned before, the first homotopy equivalence $|\mb{D}^{\mathcal{L}, n}_{\bullet, 0}| \rightarrow|\mb{D}^{\mathcal{L}, n}_{\bullet, \bullet}|$ is deduced from the second just as in Lemma \ref{lemma: inclusion of zero simplices} (see also \cite[Page 327]{GRW 14}) and so we omit the proof of this and focus on establishing the second weak homotopy equivalence, $|\mb{D}^{\mathcal{L}, n}_{\bullet, \bullet}|  \rightarrow |\mb{D}^{\mathcal{L}, n-1}_{\bullet}|$. We would like to apply Theorem \cite[Theorem 6.2]{GRW 14} again but it turns out that a slightly stronger version is required, even in \cite{GRW 14} as pointed out in the erratum \cite{GRW 14b}.

\subsection{A stronger simplicial technique}
It was observed by the second author that property iii) of \cite[Theorem 6.2]{GRW 14} does not hold in the case of middle dimensional surgery, already in the even dimensional case. This oversight was corrected in the erratum \cite{GRW 14b} and we will need to use the following strengthening from \cite[Theorem 6.4]{BP 15}, which was abstracted from \cite{GRW 14b}.

Consider a symmetric relation $\mathcal{R}$ that is open and dense as a subset of the fibred product $X_{0}\times_{X_{-1}}X_{0}$.

\begin{theorem} \label{theorem: improved flag complex theorem}
Let $X_{\bullet} \longrightarrow X_{-1}$ be an augmented topological flag complex that satisfies conditions (i) and (ii) of \cite[Theorem 6.2]{GRW 14}.
Let $\mathcal{R} \subset X_{0}\times_{X_{-1}}X_{0}$ be an open and dense symmetric relation with the property that $X_{1} \subset \mathcal{R}$.
Suppose that $X_{\bullet} \longrightarrow X_{-1}$ satisfies the following further condition:
\begin{enumerate} 
\item[(iii)*] Let $x \in X_{-1}$. Given:
\begin{itemize} 
\item  a non-empty subset, $\{v_{1}, \dots, v_{m}\} \subset \varepsilon^{-1}(x)$, whose elements are pairwise related by $\mathcal{R}$, and 
\item  an arbitrary subset, $\{w_{1}, \dots, w_{k}\} \subset \varepsilon^{-1}(x)$, such that $(v_{i}, w_{j}) \in X_{1}$ for all $i, j$, 
\end{itemize}
there exists $v \in \varepsilon^{-1}(x)$ such that $(v, v_{i}) \in X_{1}$ and $(v, w_{j}) \in X_{1}$ for all $i, j$. 
\end{enumerate}
If condition (iii)* is satisfied for all $x \in X_{-1}$ then the induced map $|X_{\bullet}|  \longrightarrow X_{-1}$ is a weak homotopy equivalence. 
\end{theorem}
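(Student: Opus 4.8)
The plan is to follow the proof of Theorem~\ref{theorem: flag complex equivalence} essentially verbatim, isolating the one point at which its hypothesis (iii) is used and checking that (iii)* together with the relation $\mathcal{R}$ suffices there. Recall the structure of that argument: given a map $g\colon D^k \longrightarrow X_{-1}$ (possibly with a partial lift to $|X_\bullet|$ already specified over a subcomplex of a triangulation of $D^k$), condition (i) produces, after subdividing finely, a lift of $g$ over each closed simplex of the triangulation into $X_0$; one then patches these local $0$-simplices into a genuine map $D^k \longrightarrow |X_\bullet|$ lifting $g$, and this patching works because condition (iii) lets one interpolate between finitely many vertices lying over a common point through a single higher simplex, so that each fibre of $|X_\bullet| \longrightarrow X_{-1}$ is exhibited as a filtered union of cones and in particular is weakly contractible. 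The role of (i) is unchanged, so the only thing to re-examine is the patching step.

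Concretely, the patching step requires the following: given a point $x \in X_{-1}$ and finitely many vertices $v_1,\dots,v_m \in \varepsilon^{-1}(x)$ occurring over $x$ as the local lifts just constructed on the simplices of the triangulation meeting at a common face, together with finitely many vertices $w_1,\dots,w_k \in \varepsilon^{-1}(x)$ coming from the already-fixed partial lift on that face, one must find $v \in \varepsilon^{-1}(x)$ with $(v,v_i) \in X_1$ and $(v,w_j) \in X_1$ for all $i,j$ (and $(v_i,w_j) \in X_1$ holds automatically). This is exactly the situation of hypothesis (iii)* except that (iii)* additionally demands that the $v_i$ be pairwise related by $\mathcal{R}$.

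So the point is to arrange that the $v_i$ are pairwise $\mathcal{R}$-related. Because these vertices are values of local lifts on adjacent simplices of a fine subdivision, any two of them can be made as close as we like to a pair of vertices that actually span a $1$-simplex of $X_\bullet$, hence lie in $X_1 \subseteq \mathcal{R}$; since $\mathcal{R}$ is open in $X_0 \times_{X_{-1}} X_0$, choosing the subdivision fine enough (and using the density of $\mathcal{R}$ to make small perturbations within the fibre $\varepsilon^{-1}(x)$ where needed, the perturbations kept honest by condition (i)) forces the $v_i$ into pairwise $\mathcal{R}$-relation. Then (iii)* supplies the common orthogonal vertex $v$, one cones off as in the proof of Theorem~\ref{theorem: flag complex equivalence}, and the conclusion that $|X_\bullet| \longrightarrow X_{-1}$ is a weak homotopy equivalence follows.

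I expect the genuine work to be in the last paragraph: turning the qualitative statements ``$\mathcal{R}$ open and dense'' and ``$X_0 \longrightarrow X_{-1}$ has local lifts of maps out of disks'' into a single continuous, compatible choice of perturbed local lifts over the subdivided disk that is pairwise $\mathcal{R}$-related on each cluster of simplices around a face and still lifts $g$. This is the standard but somewhat delicate bookkeeping in arguments of this type (cf.\ \cite[Section 6]{GRW 14}); once it is in place, everything else is a transcription of the proof of Theorem~\ref{theorem: flag complex equivalence}.
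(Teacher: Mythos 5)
The paper itself does not prove this theorem; it simply cites \cite[Theorem~6.4]{BP 15}, and the accompanying remark explains that that proof abstracts the argument of the erratum \cite{GRW 14b}. Your overall strategy---follow the proof of Theorem~\ref{theorem: flag complex equivalence}, isolate the patching step where (iii) was invoked, and argue that (iii)* suffices after first arranging the relevant vertices into pairwise $\mathcal{R}$-relation---is indeed the strategy that BP~15 and the erratum take. But there is a genuine gap in the step you use to manufacture the pairwise $\mathcal{R}$-relations, and it is not the bookkeeping you defer to the last paragraph.

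You claim that, after fine subdivision, ``any two of them can be made as close as we like to a pair of vertices that actually span a $1$-simplex of $X_\bullet$,'' so that openness of $\mathcal{R} \supseteq X_1$ finishes the job. This is not so. The vertices $v_1,\dots,v_m$ in question all lie in the \emph{same} fibre $\varepsilon^{-1}(x)$: they are values at a common face of local lifts constructed independently over the several simplices meeting there. Making the triangulation of $D^k$ finer shrinks those simplices but imposes no control whatsoever on the spread of the lift-values within the fibre---condition~(i) provides local sections of $\varepsilon$, not a uniform modulus of continuity across distinct sections, and two local lifts of $g$ can take values arbitrarily far apart at a shared point of their domains. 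In particular a pair $(v_i,v_j)$ is not forced near $X_1$; if anything it is forced toward the diagonal of $X_0\times_{X_{-1}}X_0$, which need not meet $\mathcal{R}$. What is actually needed is to use the \emph{density} of $\mathcal{R}$ to perturb the local lifts so that they become pairwise $\mathcal{R}$-related, with openness of $\mathcal{R}$ serving only to make this genericity stable under the small parametrized homotopies of lifts that the patching argument requires; doing this compatibly over $D^k$ relative to $\partial D^k$ is the real content of the erratum and of \cite[Theorem~6.4]{BP 15}, not a transcription of the original argument plus routine bookkeeping.
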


\subsection{Proof of Theorem \ref{theorem: contractibility of space of surgery data}}
To apply Theorem \ref{theorem: improved flag complex theorem} we will yet another modification of $\mb{D}^{\mathcal{L}, n}_{\bullet, \bullet}$. Define the \textit{core} of the surgery tube to be
\[C \; = \; \{0\}\times(-6, -2)\times\{0\}\times D^{n+1} \;  \; \subset \; \; \R\times(-6, -2)\times\R^{n}\times D^{n+1}.\]

\begin{defn} \label{defn: flexible version n}
For $x = (a, \varepsilon, (W, \ell_{W}), V) \in \mb{D}^{\mathcal{L}, n-1}_{\bullet}$, let $\widetilde{\mb{Y}}_{\bullet}(x)$ be the semi-simplicial space from Section \ref{subsection: resolution middle dimension} except now we only ask the map $e$ to be a smooth embedding on a neighborhood of the subset
\[\Lambda\times C \subset \Lambda\times\left(\R\times (-6, -2)\times\R^{n}\times D^{n+1}\right).\]
Furthermore, define a bi-semi-simplicial space by 
\[\widetilde{\mb{D}}^{\mathcal{L}, n}_{p, q} = \{(x, y) \; | \; x \in \mb{D}^{\mathcal{L}, n-1}_{p}, \; y \in \widetilde{\mb{Y}}_{q}(x)\}.\]
\end{defn}

Using the projection we obtain an augmented bi-semi-simplicial space $\widetilde{\mb{D}}^{\mathcal{L}, n}_{\bullet, \bullet} \; \longrightarrow \; \widetilde{\mb{D}}^{\mathcal{L}, n}_{\bullet}$ with $\widetilde{\mb{D}}^{\mathcal{L}, n}_{\bullet, -1} = \widetilde{\mb{D}}^{\mathcal{L}, n-1}_{\bullet}$. Let $\mathcal{T} \subset \widetilde{\mb{D}}^{\mathcal{L}, n}_{p, 0}\times_{\widetilde{\mb{D}}^{\mathcal{L}, n}_{p, -1}}\widetilde{\mb{D}}^{\mathcal{L}, n}_{p, 0}$ be the subset consisting of those 
$$\left((a, \varepsilon, (W, \ell_{W}), V), \; (\Lambda_{1}, \delta_{1}, e_{1}, \ell_{1}), (\Lambda_{2}, \delta_{2}, e_{2}, \ell_{2})\right)$$ 
such that the embeddings $e_{1}|_{\Lambda_{1}\times C}$ and  $e_{2}|_{\Lambda_{2}\times C}$ are transverse. This subset $\mathcal{T}$ is clearly a symmetric and open relation. By the Thom transversality theorem applied to each of the fibres over $\widetilde{\mb{D}}^{\mathcal{L}, n}_{p, -1}$ we see that it is a dense subset of the fibred product $\mathcal{T} \subset \widetilde{\mb{D}}^{\mathcal{L}, n}_{p,0}\times_{\widetilde{\mb{D}}^{\mathcal{L}, n}_{p, -1}}\widetilde{\mb{D}}^{\mathcal{L}, n}_{p, 0}$.

Just as in \cite[Proposition 6.15]{GRW 14} it follows that the inclusion $\widetilde{\mb{D}}^{\mathcal{L}, n}_{\bullet, \bullet} \hookrightarrow \mb{D}^{\mathcal{L}, n}_{\bullet, \bullet}$ is a level-wise weak homotopy equivalence and thus it induces a weak homotopy equivalence $|\widetilde{\mb{D}}^{\mathcal{L}, n}_{\bullet, \bullet}| \simeq |\mb{D}^{\mathcal{L}, n}_{\bullet, \bullet}|$. To prove Theorem \ref{theorem: contractibility of space of surgery data} we will as before show that for each $p \in \Z_{\geq 0}$, the augmented topological flag complex 
\[\widetilde{\mb{D}}^{\mathcal{L}, n}_{p, \bullet} \longrightarrow \widetilde{\mb{D}}^{\mathcal{L}, n-1}_{p}\]
induces a weak homotopy equivalence $|\widetilde{\mb{D}}^{\mathcal{L}, n}_{p, \bullet}| \simeq \widetilde{\mb{D}}^{\mathcal{L}, n-1}_{p}$. We shall do so by applying Theorem \ref{theorem: improved flag complex theorem} for the relation $\mathcal T$. 

\subsection{Verficiation of Condition (iii)*} \label{subsection: condition iii}
The main technical tool that we use to establish condition (iii) is the proposition stated below. For what follows let $n \geq 4$, $M_{0}$ and $M_{1}$ two $(n-1)$-connected, $2n$-dimensional, closed manifolds, and $W$ a cobordism between $M_{0}$ and $M_{1}$ that is $(n-1)$-connected as well.

\begin{proposition} \label{corollary: multiple inductive disjunction}
Let 
$$f, g_{1}, \dots, g_{k}: (S^{n}\times[0, 1], S^{n}\times\{0, 1\}) \longrightarrow (W, M_{0}\sqcup M_{1})$$
be a collection of embeddings and let $x, y_{1}, \dots, y_{k} \in H_{n}(M_{0})$ denote the classes represented by 
$$f|_{S^{n}\times\{0\}}, \; g_{1}|_{S^{n}\times\{0\}}, \; \dots, \; g_{k}|_{S^{n}\times\{0\}}$$ 
respectively. 
Let $K_{1}, \dots, K_{l} \subset W$ be a collection of pairwise transverse submanifolds of codimension $\geq 3$, and let $K$ denote the union $\cup_{i=1}^{l}K_{i}$.
Suppose that the following conditions are met:
\begin{enumerate} 
\item[(a)] $\lambda(x, y_{i}) = 0$ for all $i = 1, \dots, k$;
\item[(b)] the embeddings $g_{1}, \dots, g_{k}$ are pairwise transverse.
\item[(c)] the images of $f$ and $g_{1}, \dots, g_{k}$ are contained in the complement, $W\setminus K$.
\end{enumerate}
Then there exists an isotopy $f_{t}: (S^{n}\times[0, 1], S^{n}\times\{0, 1\}) \longrightarrow (W, M_{0}\sqcup M_{1})$ with  $t \in [0, 1],$ that satisfies:
\begin{itemize} 
\item $f_{0} = f$, 
\item $f_{t}(S^{n}\times[0, 1]) \subset W\setminus K$ for all $t \in [0, 1]$,
\item $f_{1}(S^{n}\times[0, 1])\cap g_{i}(S^{n}\times[0, 1]) = \emptyset$ for all $i = 1, \dots, k.$
\end{itemize}
Suppose further that $f$ is such that $f(S^{n}\times\{0\})\cap g_{i}(S^{n}\times\{0\}) = \emptyset$ for all $i = 1, \dots, k.$ Then the isotopy $f_{t}$ can be chosen so that $f_{t}|_{S^{n}\times\{0\}} = f|_{S^{n}\times\{0\}}$ for all $t \in [0, 1]$. 
\end{proposition}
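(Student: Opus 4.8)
The plan is to argue by induction on $k$, building the required isotopy out of two phases: first an isotopy of $f$ (which is allowed to move the bottom face) making $f(S^n\times\{0\})$ disjoint from every $g_i(S^n\times\{0\})$ inside $M_0$, and then a sequence of $k$ isotopies, each fixing $f|_{S^n\times\{0\}}$, which successively remove the interior intersections of $f$ with $g_1,\dots,g_k$. When $f$ already satisfies the boundary disjointness hypothesis the first phase is empty and one obtains the rel-$S^n\times\{0\}$ assertion directly; in general one concatenates the two phases. We may also assume at the outset, after a small preliminary perturbation of $f$ (permissible by (b), which keeps all of $f,g_1,\dots,g_k$ in mutual general position), that $f(S^n\times[0,1])$ is transverse to each $g_i(S^n\times[0,1])$, so that these intersections are $1$-manifolds.

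For the first phase: since $\lambda(x,y_i)=0$ for all $i$ and $M_0$ is $(n-1)$-connected, hence simply connected, of dimension $2n\ge 8$, the classical Whitney trick gives an ambient isotopy of $M_0$ carrying $f|_{S^n\times\{0\}}$ off each $g_i|_{S^n\times\{0\}}$; the $2$-dimensional Whitney discs can be chosen in general position with respect to the submanifolds $g_j(S^n\times\{0\})$ and to $K\cap M_0$ (the relevant dimension counts $2+n<2n$ and $2+(2n-3)<2n$ hold since $n\ge4$), so conditions (b) and (c) are respected, and extending this isotopy over a collar of $M_0$ in $W$ produces the desired isotopy of $f$. For the main phase, each individual step is an application of Theorem~\ref{lemma: half whitney trick} to $f$ and a single $g_j$, carried out inside the open manifold $W\setminus K$, with the triad $(S^n\times[0,1];S^n\times\{0\},S^n\times\{1\})$ playing the role of both $(M;\partial_0M,\partial_1M)$ and $(N;\partial_0N,\partial_1N)$. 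The hypotheses hold: $f(S^n\times\{0\})\cap g_j(S^n\times\{0\})=\emptyset$ after the first phase; the pair $(S^n\times[0,1],S^n\times\{1\})$ is $1$-connected (in fact all its relative homotopy groups vanish); and $(W\setminus K,M_1)$ is $2$-connected, which follows from $W$ and $M_1$ being $(n-1)$-connected with $n\ge4$ (so $\pi_i(W,M_1)=0$ for $i\le2$ by the long exact sequence of the pair) together with the fact that deleting the codimension-$\ge3$ submanifold $K$ does not change $\pi_{\le 2}$.

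The inductive step is where the only genuine difficulty lies, and I expect it to be the main obstacle: having already pushed $f$ off $g_1,\dots,g_{j-1}$, one must push it off $g_j$ without recreating intersections with the earlier cylinders or leaving $W\setminus K$. The key observation is that the isotopy delivered by Theorem~\ref{lemma: half whitney trick} can be localized in a neighbourhood of $f(S^n\times[0,1])$ together with the union of the guiding discs, and that these discs, being $2$-dimensional, can by general position be taken disjoint from each $g_i(S^n\times[0,1])$ ($i<j$) and from $K$, since $2+(n+1)<2n+1$ and $2+(2n-2)<2n+1$ for $n\ge4$; as $f(S^n\times[0,1])$ is itself disjoint from $K\cup\bigcup_{i<j}g_i(S^n\times[0,1])$, a sufficiently small such neighbourhood is disjoint from this set, so the whole isotopy stays in its complement and does not spoil the disjointness achieved so far. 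This localization/general-position argument is exactly the one carried out in \cite[Proposition 6.9]{BP 15}, which I would follow. After $k$ steps one obtains the required isotopy, fixing $f|_{S^n\times\{0\}}$ throughout; concatenating with the first-phase isotopy when $f$ does not already meet the boundary hypothesis completes the proof.
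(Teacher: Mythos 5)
Your proposal follows the same route as the paper's proof: first apply the classical Whitney trick inside $M_0$ (using $\lambda(x,y_i)=0$ and the pairwise transversality of the $g_i|_{S^n\times\{0\}}$) to arrange $f(S^n\times\{0\})\cap g_i(S^n\times\{0\})=\emptyset$, then inductively apply the half-Whitney trick (Theorem~\ref{lemma: half whitney trick}) to the cylinders inside the complement $W\setminus K$, deferring the bookkeeping of the induction to \cite[Proposition~6.9]{BP 15}, exactly as the paper does. Your dimension counts and the check that the relative pair in question is $2$-connected are correct, and you even identify the boundary pair $(W\setminus K,M_1\setminus K)$ (the paper writes $(W',M'_0)$, which appears to be a small slip, though harmless since both pairs are $2$-connected by the same argument). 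The one place where you go a little beyond what the paper states explicitly is your claim that the isotopy produced by the half-Whitney trick can be localized near $f$ and its guiding discs; this is indeed what the cited BP~15 argument establishes, so it is a fair elaboration rather than a gap.
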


\begin{proof}
By condition (a) we may apply the \textit{Whitney trick} \cite[Theorem 6.6]{Mi 65} inductively to obtain an isotopy of $f|_{S^{n}\times\{0\}}$, that pushes $f(S^{n}\times\{0\}) \subset M_{0}$ off of the submanifolds $g_{1}(S^{n}\times\{0\}), \dots,  g_{k}(S^{n}\times\{0\}) \subset M_{0}$, while staying in the complement, $M_{0}\setminus(M_{0}\cap K)$. Thus we reduce to the case where
$$f(S^{n}\times\{0\})\cap g_{i}(S^{n}\times\{0\}) = \emptyset \quad \text{for all $i = 1, \dots, k$.}$$
We remark that in order to inductively apply the Whitney trick as we did above, it is necessary that the submanifolds $g_{1}(S^{n}\times\{0\}), \dots, g_{k}(S^{n}\times\{0\}) \subset M_{0}$ be pairwise transverse, see \cite[Proposition 6.9]{BP 15}.
We also remark that in order to apply Whitney trick in this situation it is necessary that the manifolds $K_{1}, \dots, K_{l} \subset W$ be pairwise transverse and have codimension $\geq 3$.
Indeed, the lower bound on the codimension, together with the pairwise transversality, ensures that the complement $W\setminus K$ will be simply connected. 
Simple connectivity of ambient space is required to apply the Whitney trick. 

Let $W'$, $M'_{0}$, and $M'_{1}$ denote the complements $W\setminus K$, $M_{0}\setminus(M_{0}\cap K)$, and $M_{1}\setminus(M_{1}\cap K)$. Since the co-dimension of $K$ is greater than or equal to $3$, it follows that the pair $(W', M'_{0})$ is $2$-connected. To prove the corollary, it will suffice to construct an isotopy 
$$f_{t}: (S^{n}\times[0, 1], S^{n}\times\{0, 1\}) \longrightarrow (W', M'_{0}\sqcup M'_{1}), \quad t \in [0, 1],$$
with $f_{0} = f$ and $f_{t}|_{S^{n}\times\{0\}} = f|_{S^{n}\times\{0\}}$ for all $t \in [0, 1]$, such that
$$f(S^{n}\times[0,1])\cap g_{i}(S^{n}\times[0,1]) \; = \; \emptyset$$
for all $i = 1, \dots, k$. Such an isotopy exists by inductive application of \textit{higher dimensional half-Whitney trick} from \cite[Theorem C.3]{BP 15} using the same inductive argument employed in \cite[Corollary 5.10.1]{BP 15}. We note that here as in \cite[Corollary 5.10.1]{BP 15}, it is essential that the embeddings $g_{1}, \dots, g_{n}$ are pairwise transverse.
\end{proof}

The next proposition establishes condition (iii)*. The proof is very similar to \cite[Lemma 5.11]{BP 15} and \cite[Proposition 6.19]{GRW 14} except in our situation we must take the intersection form and the Lagrangian subspaces into account. 

\begin{proposition} \label{proposition: condition iii}
For $p \in \Z_{\geq 0}$, let $x = (a, \varepsilon, (W, \ell_{W}), V) \in \widetilde{\mb{D}}^{\mathcal{L}, n-1}_{p}$.
\begin{itemize} 
\item Let $\{v_{1}, \dots, v_{k}\} \in \widetilde{\mb{Y}}_{0}(x)$ be a non-empty collection of elements that are pairwise transverse. 
\item Let $\{w_{1}, \dots, w_{s}\} \in \widetilde{\mb{Y}}_{0}(x)$ be a collection of elements with $(v_{i}, w_{j}) \in \widetilde{\mb{Y}}_{1}(x)$ for all $i, j$.
\end{itemize}
Then there exists $u \in \widetilde{\mb{Y}}_{0}(x)$ such that $(u, v_{j}) \in \widetilde{\mb{Y}}_{1}(x)$ and $(u, w_{j}) \in \widetilde{\mb{Y}}_{1}(x)$ for all $i, j$.
\end{proposition}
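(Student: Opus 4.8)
```latex
\begin{proof}[Proof sketch]
The plan is to construct the required surgery datum $u$ by starting from $v_1$ (one of the given ``general position'' vertices) and modifying its embedding so that it becomes disjoint from all of the $v_i$ and all of the $w_j$. Write $v_i = (\Lambda^{v_i}, \delta^{v_i}, e^{v_i}, \ell^{v_i})$ and $w_j = (\Lambda^{w_j}, \delta^{w_j}, e^{w_j}, \ell^{w_j})$. First I would take $u$ to carry (a copy of) the same index set, function, core data and $\theta$-structure as $v_1$; the only thing to be changed is the embedding $e^{v_1}$. The point is that the core embeddings $e^{v_1}|_{\Lambda C}, \dots, e^{v_k}|_{\Lambda C}$ are pairwise transverse by hypothesis (they lie pairwise in $\mathcal{T}$), and each $e^{v_i}|_{\Lambda C}$ is disjoint from each $e^{w_j}|_{\Lambda C}$ on the nose (since $(v_i,w_j) \in \widetilde{\mb{Y}}_1(x)$ means the full images are disjoint, hence in particular the core images). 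So the combinatorial situation is exactly that of Proposition \ref{corollary: multiple inductive disjunction}: one ``moving'' family of embedded $S^n \times [0,1]$'s (the core cylinders of $v_1$, restricted appropriately over $W|_{[a_\ell, a_p]}$), a collection of ``obstacle'' families $g_\bullet$ (the core cylinders of $v_2, \dots, v_k$) which are pairwise transverse, and a further collection of obstacle families (the core cylinders of the $w_j$) already disjoint from $v_1$.

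The key homological input needed to invoke Proposition \ref{corollary: multiple inductive disjunction} is hypothesis (a) there, namely that the relevant intersection numbers vanish. Here I would use the Lagrangian data: by Lemma \ref{remark: embedding contained in lagrangian}, for each $\lambda$ the class of the core sphere of $v_1$ over a slice $M_i = W|_{a_i}$ lies in the Lagrangian subspace $V_i|_{M_i} \leq H_n(M_i)$, and likewise the core sphere of each $v_{i'}$ lies in the same Lagrangian. Since $\lambda$ vanishes on a Lagrangian, $\lambda(x, y_{i'}) = 0$ for all the core classes involved, which is precisely condition (a). (That the core spheres may be traced coherently across the various slices using the cylindrical structure of $W$ over the intervals $(a_i - \varepsilon_i, a_i + \varepsilon_i)$ and the fact that $e^{v_1}$ is height-preserving there is what lets one speak of a single class $x \in H_n(M_0)$.) Then I would apply Proposition \ref{corollary: multiple inductive disjunction} componentwise, with $K$ taken to be a codimension-$\geq 3$ submanifold containing the obstructing cores of the $w_j$ (or simply apply the ``furthermore'' clause of that proposition to keep the incoming boundary fixed while using the $w_j$'s as part of the ambient complement), to obtain an isotopy $e^{v_1}_t$ of core embeddings, constant on the incoming boundary slice, ending at an embedding disjoint from every core of $v_2, \dots, v_k$ and still disjoint from every core of $w_1, \dots, w_s$.

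Having isotoped the \emph{core} embedding, I would then extend the isotopy to an ambient isotopy of a tubular neighborhood and thus to an actual embedding $e^u$ of $\Lambda \times (\R \times (-6,-2) \times \R^n \times D^{n+1})$ on a neighborhood of $\Lambda \times C$ (recall the flexible model $\widetilde{\mb{Y}}_\bullet$ only demands embedding near the core); by shrinking the tubular neighborhood one arranges that $\mathrm{Image}(e^u)$ is disjoint from $\mathrm{Image}(e^{v_i})$ for $i = 2, \dots, k$ and from $\mathrm{Image}(e^{w_j})$ for all $j$ — using that disjointness of compact core images survives passage to sufficiently small neighborhoods. One must also check that conditions (i)--(v) in Definition \ref{defn: middle dimension resolution} are preserved: (i)--(iii) are local and follow from the isotopy being supported away from the boundary and core and from pushing forward the structures $\ell^{v_1}$; condition (iv) holds because the span $\sum V_i$ is unchanged and $u$ has the same surgery spheres as $v_1$ up to isotopy (so the span condition, which only involves the complement of the image, is stable under small isotopy); condition (v), that the surgered slices $\widetilde M_i$ are homotopy spheres, holds because the surgery spheres of $u$ are isotopic to those of $v_1$. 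Finally, since $u$ and $v_1$ have disjoint index sets (choose fresh elements of $\Omega$) one also gets $(u, v_1) \in \widetilde{\mb{Y}}_1(x)$ automatically. I expect the main obstacle to be the bookkeeping of the last paragraph: promoting a core-level disjointness to genuine disjointness of the full (thickened) images while simultaneously verifying that none of the five defining conditions is destroyed — in particular that the Lagrangian/span condition (iv) is robust under the isotopy. The genuinely geometric content, however, is entirely contained in Proposition \ref{corollary: multiple inductive disjunction} together with the Lagrangian vanishing supplied by Lemma \ref{remark: embedding contained in lagrangian}, which is the conceptual heart of why Lagrangian data must be carried along.
\end{proof}
```
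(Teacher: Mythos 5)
Your proposal matches the paper's strategy almost exactly: start from $u := v_1$, take $K$ to be the union of the $w_r$-cores so that the ambient complement $W' = W\setminus K$ automatically keeps $u$ away from the $w$'s, invoke Lemma \ref{remark: embedding contained in lagrangian} to place all the relevant core classes in the Lagrangian $V_i|_{a_i}$ (so that the pairing vanishes), and then apply Proposition \ref{corollary: multiple inductive disjunction} to isotope $u$'s core off the $v_j$-cores while staying inside $W'$. The paper carries out the disjunction interval-by-interval over $[a_0,a_1], [a_1,a_2], \dots$, using the ``furthermore'' clause of that proposition to freeze earlier slices; you gesture at this by noting that the ``furthermore'' clause lets one keep the incoming boundary fixed, but you do not spell out the iteration. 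That is a presentational rather than a mathematical difference.

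There is, however, one step that is genuinely wrong. You write that since $u$ and $v_1$ ``have disjoint index sets (choose fresh elements of $\Omega$) one also gets $(u, v_1) \in \widetilde{\mb{Y}}_1(x)$ automatically.'' This is not what membership in $\widetilde{\mb{Y}}_1(x)$ asks for: by Definition \ref{defn: second simplicial coordinate n-1} (and its analogue in the $n$-case), a pair lies in $\mb{Y}_1$ exactly when the \emph{images} of the two embeddings are disjoint submanifolds of $\R\times(-1,1)^{\infty-1}$. Relabelling $\Lambda$ inside $\Omega$ has no effect on those images, and since you explicitly stopped pushing at $v_2, \dots, v_k$, the isotoped $u$ would still intersect (indeed largely coincide with) the original $v_1$. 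The paper avoids this by including $j = 1$ in the range of the disjunction: the isotopy pushes $u$'s core off the cores of \emph{all} of $v_1, \dots, v_k$, where pushing a manifold off a copy of itself is exactly what the (half-)Whitney trick accommodates after an arbitrarily small initial displacement. With that single correction (push off the full set $\{v_1,\dots,v_k\}$, not just $\{v_2,\dots,v_k\}$, and drop the appeal to index-set disjointness), your argument goes through and is the one in the paper.
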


\begin{proof}
For each $j = 1, \dots, k$, let $(\Lambda^{v}_{j}, \delta^{v}_{j}, e^{v}_{j}, \ell^{v}_{j})$ denote the element $v_{j}$, and for $r = 1, \dots, s$ let $(\Lambda^{w}_{r}, \delta^{w}_{r}, e^{w}_{r}, \ell^{w}_{r})$ denote the element $w_{r}$. We temporarily set $u = v_{1}$ and write $u = (\Lambda, \delta, e, \ell)$. Since $(w_{r}, v_{j}) \in \widetilde{\mb{Y}}_{1}(x)$ for all $r, j$, it follows that 
$$e^{v}_{j}(\Lambda^{v}_{j}\times C)\cap e^{w}_{r}(\Lambda^{w}_{r}\times C) \; = \; \emptyset \quad \text{for all $j, r$,}$$
where $C$ is the core from Definition \ref{defn: flexible version n}. Let $K \subset W$ denote the union $\cup_{r=1}^{s}e^{w}_{r}(C)  \subset W$ and let $W'$ denote $W\setminus K$. We have $e(\Lambda\times C) \subset W'$ and $e^{v}_{j}(\Lambda^{v}_{j}\times C) \subset W'$ for all $j = 1, \dots, k$.

By Remark \ref{remark: embedding contained in lagrangian} the homology classes in $H_{n}(W|_{a_{0}})$ determined by the submanifolds
$$e^{v}_{j}(\Lambda^{v}_{j}\times C)\cap W|_{a_{0}} \subset W|_{a_{0}} \quad j = 1, \dots, k, $$
are all contained in the subspace $V_{0}|_{a_{0}} \subset H_{n}(W|_{a_{0}}),$ which is Lagrangian by definition. Similarly, for each $\lambda \in \delta^{-1}(i)$, the homology class determined by the submanifold 
$$e(\lambda\times C)\cap W|_{a_{0}} \; \subset \; W|_{a_{0}},$$
is contained in $V_{i}|_{a_{0}} \subset H_{n}(W|_{a_{0}})$ as well. Since the intersection form vanishes on $V_{0}$, and the set $\{v_{1}, \dots, v_{k}\}$ is in general position, it follows from Proposition \ref{corollary: multiple inductive disjunction} that for each $\lambda \in \Lambda$, there exists an isotopy of $e(\lambda\times C)$ that pushes $e(\lambda\times C)\cap W|_{[a_{0}, a_{1}]}$ off of $e^{v}_{j}(\lambda^{v}_{j}\times C)\cap W|_{[a_{0}, a_{1}]}$ for all $j = 1, \dots, k$, and keeps $e(\lambda\times C)$ inside of $W' = W\setminus K \subset W$. The rest of the proof of \cite[Proposition 6.19]{GRW 14} (or \cite[Lemma 5.11]{BP 15}) now applies.
\end{proof}

\subsection{Verification of Condition (ii)} \label{subsection: condition ii}
To prove Condition (ii), it will suffice to show that $\widetilde{\mb{Y}}_{0}(x)$ is non-empty for any $p$-simplex $x \in \mb{D}^{\mathcal{L}, n-1}_{p}$.
This will require us to develop some preliminary results. The technical tools involved include the embedding theorems of Haefliger and Hudson from \cite{Ha 61} and \cite{H 69}. Both of these embedding theorems require the manifolds involved to be above a certain dimension, and this requirement is the source of our condition on the integer $n$. 

Let $n \geq 4$ and $M_{0}$, $M_{1}$, and $W$ be as in the last section. 

\begin{proposition} \label{proposition: representing homology by cylinders}
Every relative homology class 
$$x \in H_{n+1}(W, M_{0}\sqcup M_{1})$$ 
is represented by an embedding $(S^{n}\times[0, 1], S^{n}\times\{0, 1\}) \; \longrightarrow \; (W, M_{0}\sqcup M_{1}).$
\end{proposition}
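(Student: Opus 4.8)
The plan is to reduce the statement to a purely homotopy–theoretic one and then invoke Proposition \ref{proposition: two step haefliger theorem}. Since that proposition promotes any map of pairs $(S^{n}\times[0,1],\,S^{n}\times\{0,1\}) \to (W,\,M_{0}\sqcup M_{1})$ to a homotopic, hence homologous, embedding, it suffices to produce a \emph{continuous} map $f\colon (S^{n}\times[0,1],\,S^{n}\times\{0,1\}) \to (W,\,M_{0}\sqcup M_{1})$ with $f_{*}[S^{n}\times[0,1]] = x$.

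First I would record the shape of $H_{n+1}(W,M_{0}\sqcup M_{1})$. Poincar\'e duality together with $(n-1)$-connectedness gives $H_{n+1}(M_{i})\cong H^{n-1}(M_{i}) = 0$, so the long exact sequence of the pair collapses to a short exact sequence
\[
0 \longrightarrow H_{n+1}(W) \stackrel{j_{*}}{\longrightarrow} H_{n+1}(W,M_{0}\sqcup M_{1}) \stackrel{\partial}{\longrightarrow} \ker\!\big(H_{n}(M_{0})\oplus H_{n}(M_{1})\to H_{n}(W)\big) \longrightarrow 0.
\]
Write $\partial x = (x_{0},x_{1})$. Since the $M_{i}$ are $(n-1)$-connected the Hurewicz theorem represents $x_{i}$ by a map $\phi_{i}\colon S^{n}\to M_{i}$; after composing $\phi_{1}$ with an orientation–reversing self–diffeomorphism of $S^{n}$ if necessary, the classes $[\iota^{\mathrm{in}}\phi_{0}]$ and $[\iota^{\mathrm{out}}\phi_{1}]$ agree in $H_{n}(W)\cong\pi_{n}(W)$ (Hurewicz again, now for $W$), so --- $W$ being simply connected --- $\iota^{\mathrm{in}}\phi_{0}$ and $\iota^{\mathrm{out}}\phi_{1}$ are homotopic in $W$. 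A choice of homotopy is exactly a map of pairs $f_{0}$ as above, and by construction $\partial\big(f_{0*}[S^{n}\times[0,1]]\big) = \partial x$.

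It then remains to correct the ``closed part'': $x - f_{0*}[S^{n}\times[0,1]]$ lies in $\ker\partial = j_{*}H_{n+1}(W)$, say it equals $j_{*}(z)$ with $z\in H_{n+1}(W)$. Because $W$ is $(n-1)$-connected the Hurewicz homomorphism $\pi_{n+1}(W)\to H_{n+1}(W)$ is surjective (e.g.\ compare $W$ with the homotopy fibre of the Postnikov map $W\to K(H_{n}(W),n)$, which is $n$-connected, and use $H_{n+1}(K(H_{n}(W),n))=0$). Fixing an arc from $\phi_{0}(\ast)\in M_{0}$ to $\phi_{1}(\ast)\in M_{1}$, any two admissible connecting homotopies differ by an element of $\pi_{n+1}(W)$, and altering the homotopy by $\alpha\in\pi_{n+1}(W)$ changes the relative homology class of the resulting cylinder by $j_{*}$ of the Hurewicz image of $\alpha$; choosing $\alpha$ with Hurewicz image $z$ therefore yields a cylinder representing $x$ exactly. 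Applying Proposition \ref{proposition: two step haefliger theorem} to this cylinder finishes the proof.

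The step I expect to require the most care is the bookkeeping of the last paragraph: checking that, relative to the fixed basepoint arc, the set of admissible homotopies is a torsor under $\pi_{n+1}(W)$ and that the induced change on the relative class is precisely the Hurewicz map followed by $j_{*}$, together with pinning down orientation conventions so that $\partial f_{0*}[S^{n}\times[0,1]]$ equals $+\partial x$ and not $-\partial x$. By contrast the genuinely geometric input --- the general position and half–Whitney arguments needed to make the cylinder embedded, which is also where the hypothesis $n\geq 4$ enters --- has already been packaged in Proposition \ref{proposition: two step haefliger theorem}, and the only other use of the manifold structure is the duality computation $H_{n+1}(M_{i})=0$.
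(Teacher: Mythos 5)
Your proof is correct, and it takes a genuinely different route from the paper's. Both arguments begin the same way: pass to the boundary $\partial x \in H_{n}(M_{0}\sqcup M_{1})$, realize it by maps of spheres via Hurewicz, and produce a connecting homotopy $S^{n}\times[0,1]\to W$ matching the two ends. The arguments then diverge. The paper first makes this cylinder \emph{embedded} by applying Proposition~\ref{proposition: two step haefliger theorem}, observes that the resulting embedded cylinder represents some $w$ with $\partial w = \partial x$ but possibly $w \neq x$, and corrects the error geometrically: the class $v = w - x$ lies in the image of $H_{n+1}(W)$, is represented by an embedded $(n+1)$-sphere (a second invocation of Haefliger), and an embedded connected sum of the cylinder with this sphere along an arc yields an embedding representing exactly $x$. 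You instead perform the correction \emph{before} embedding, at the level of maps: fixing the two boundary spheres, the rel-boundary homotopy classes of connecting homotopies form a torsor over $\pi_{n+1}(W)$ (using simple connectivity of $W$ to kill the $\pi_{1}$ contribution from $S^{n}\times[0,1]/S^{n}\times\{0,1\}\simeq S^{n+1}\vee S^{1}$), the torsor action shifts the relative class by $j_{*}\circ\text{Hurewicz}$, and Hurewicz surjectivity in degree $n+1$ lets you choose $\alpha$ to hit the required correction. Then Proposition~\ref{proposition: two step haefliger theorem} is invoked once at the very end.

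Each approach relies on surjectivity of the Hurewicz map $\pi_{n+1}(W)\to H_{n+1}(W)$: the paper uses this implicitly when it asserts that $v$ ``is represented by a map $h: S^{n+1}\to W$,'' while you make it explicit and supply a clean proof via the Postnikov fibration $F\to W\to K(H_{n}(W),n)$ together with $H_{n+1}(K(A,n))=0$. Your version is more economical (one appeal to the Haefliger/Hudson machinery rather than two, no geometric connected-sum step), at the cost of the slightly more delicate homotopy-theoretic bookkeeping --- the torsor structure and the identification of how it moves the relative fundamental class --- which you correctly flag as the step needing the most care. The paper's version is more concrete at the geometric level and avoids needing the torsor formalism, but does the embedding work twice. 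Both are valid; yours could be regarded as a streamlining.
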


\begin{proof}
Let $x \in H_{n+1}(W, M_{0}\sqcup M_{1})$ be as in the statement of the proposition. Consider the boundary map 
\[{\partial}: H_{n+1}(W, M_{0}\sqcup M_{1}) \longrightarrow H_{n}(M_{0}\sqcup M_{1}),\]
and let $y$ denote the class ${\partial}(x)$. By the Hurewicz theorem (applied to $\pi_{n}(M_{0})$ and $\pi_{n}(M_{1})$), the class $y$ is represented by a map, $\phi: S^{n}\times\{0, 1\} \longrightarrow M_{0}\sqcup M_{1}$ sending $S^{n}\times\{i\}$ into $M_{i}$ for $i = 0, 1$. Let $\iota_{i}: M_{i} \hookrightarrow W$ denote the inclusion. By exactness, the class $y$ maps to zero under $H_{n}(M_{0}\sqcup M_{1}) \longrightarrow H_{n}(W)$. It follows that the maps, 
$$\iota_{0}\circ\phi|_{S^{n}\times\{0\}}, \; \; -\iota_{1}\circ\phi|_{S^{n}\times\{1\}}: S^{n} \longrightarrow W,$$
are homotopic, where $-\iota_{1}\circ\phi|_{S^{n}\times\{1\}}$ denotes the pre-composition of $\iota_{1}\circ\phi|_{S^{n}\times\{1\}}$ with some reflection (which reverses orientation). It then follows that there exists a map 
$$\Phi: (S^{n}\times[0, 1], S^{n}\times\{0, 1\}) \longrightarrow (W, M_{0}\sqcup M_{1})$$
such that 
$$\Phi|_{S^{n}\times\{0\}} = \iota_{0}\circ\phi|_{S^{n}\times\{0\}} \quad \text{and} \quad \Phi|_{S^{n}\times\{1\}} = -\iota_{1}\circ\phi|_{S^{n}\times\{1\}}.$$
Using Haefliger's and Hudson's embedding results from \cite{Ha 61} and \cite{H 69}, we may deform $\Phi$, to a new map $\Phi'$ such that $\Phi'$ is an embedding.  We note that the use of these embedding theorems is precisely where the assumption $n \geq 4$ comes into play. 

Now, let $w \in H_{n+1}(W, M_{0}\sqcup M_{1})$ denote the class represented by this embedding $\Phi'$. It follows that, ${\partial}w = y = {\partial}x.$ Let $v$ denote the difference $w - x \in H_{n+1}(W, M_{0}\sqcup M_{1})$. The class $v$ is in the kernel of ${\partial}$ and thus is in the image of $H_{n+1}(W) \longrightarrow H_{n+1}(W, M_{0}\sqcup M_{1}),$ and so (by Hurewicz' theorem again) $v$ is represented by a map $h: S^{n+1} \longrightarrow W.$ Since $W$ is $(n-1)$-connected, by Haefliger's embedding theorem \cite{Ha 61} we may assume that the map $h$ is an embedding as well. 
Now, let $\Psi: (S^{n}\times[0, 1], S^{n}\times\{0, 1\}) \longrightarrow (W, M_{0}\sqcup M_{1})$ be the map constructed by forming the connected sum of the image of $\Phi'$ with the image of $-h,$ along an embedded arc. 
The map $\Psi$ represents the class $w - v = w - (w - x) = x,$ and thus equals $x$. 
The map may not be embedding because the image of $\Phi'$ may have non-empty intersection with the image of $h$. 
However, with the map $\Psi$ constructed, we may apply Hudson's theorem again \cite[Theorem 1]{H 69} to find a homotopy of $\Psi$ to a new map $\widehat{\Psi}$ which is an embedding. 
By Hudson's theorem, this homotopy may not be fixed on the boundary of $S^{n}\times[0, 1]$, none-the-less the resulting map $\widehat{\Psi}$ still represents the class $x \in H_{n+1}(W, M_{0}\sqcup M_{1})$.
This concludes the proof of the proposition.
\end{proof}

\begin{theorem} \label{proposition: conditions ii and iii}
For $p \in \Z_{\geq 0}$, let $x = (a, \varepsilon, (W, \ell_{W}), V) \in \mb{D}^{\mathcal{L}, n}_{p, -1}$. The set $\widetilde{\mb{Y}}_{0}(x)$ is non-empty.
\end{theorem} 

\begin{proof}
As usual we write $V = (V_{0}, \dots, V_{p})$. For each $j = 0, \dots, p$, we write $M_{j} = W|_{a_{j}}$. Let $i \in \{0, \dots, p\}$. Consider the subspace $V_{i}|_{[a_{i}, a_{p}]} \leq H_{n+1}(W|_{[a_{i}, a_{p}]}, M_{i}\sqcup M_{p})$. It follows from the definitions that $V_{i}|_{a_{i}}$ is equal to the image of $V_{i}|_{[a_{i}, a_{p}]}$ under the homomorphism 
$$\xymatrix{H_{n+1}(W|_{[a_{i}, a_{p}]}, M_{i}\sqcup M_{p}) \ar[r]^-{\partial} & H_{n}(M_{i}\sqcup M_{p}) \cong H_{n}(M_{i})\oplus H_{n}(M_{p}) \ar[r]^-{\text{pr}_*} & H_{n}(M_{i}),}$$
and thus every element of the Lagrangian subspace $V_{i}|_{a_{i}}$ is equal to the image of some element in $H_{n+1}(W|_{[a_{i}, a_{p}]}, M_{i}\sqcup M_{p})$, under the above map. By combining Proposition \ref{proposition: representing homology by cylinders} with Theorem \ref{corollary: lagrangian is a surgery solution}, there exists a finite set $\Lambda_{i}$ and an embedding
\[\varphi_{i}: \Lambda_{i}\times[a_{i}, a_{p}]\times S^{n} \; \longrightarrow \; W|_{[a_{i}, a_{p}]}\]
with the following properties:
\begin{enumerate} 
\item[(a)] The homology classes represented by the embeddings 
$$\varphi_{i}|_{\lambda\times[a_{i}, a_{p}]\times S^{n}}: \lambda\times[a_{i}, a_{p}]\times S^{n} \; \longrightarrow \; W|_{[a_{i}, a_{p}]}, \quad \lambda \in \Lambda_{i},$$
are all contained in the subspace $V_{i}|_{[a_{i}, a_{p}]} \leq H_{n+1}(W|_{[a_{i}, a_{p}]}, M_{i}\sqcup M_{p})$.
\item[(b)] The collection of embeddings 
$$\varphi_{i}|_{\lambda\times\{a_{i}\}\times S^{n}}: \Lambda_{i}\times\{a_{i}\}\times S^{n} \; \longrightarrow \; M_{i}, \quad \lambda \in \Lambda_{i}, $$
yields a basis for the subspace $V_{i} \leq H_{n}(M_{i})$.
\item[(c)]
The restriction $\varphi_{i}|_{\Lambda_{i}\times\{a_{i}\}\times S^{n}}: \Lambda_{i}\times\{a_{i}\}\times S^{n} \longrightarrow M_{i}$ extends to an embedding
$$\varphi': \Lambda_{i}\times\{a_{i}\}\times\R^{n}\times S^{n} \; \longrightarrow \; M_{i},$$
with the property that the induced bundle map 
$$\xymatrix{T(\Lambda_{i}\times\{a_{i}\}\times\R^{n}\times S^{n})\oplus \epsilon^{1} \ar[rr] && TM_{i}\times\epsilon^{1} \ar[rr]^{\ell_{W}|_{M_{i}}} && \theta^{*}\gamma^{2n+1},}$$
admits an extension to a $\theta$-structure on $\Lambda_{i}\times\{a_{i}\}\times\R^{n}\times D^{n+1}$.
\end{enumerate}

The map $(\Lambda_{i}\times\{a_{i}\}\times\R^{n}\times S^{n})\cup(\Lambda_{i}\times[a_{i}, a_{p}]\times S^{n}) \; \longrightarrow \; W|_{[a_{i}, a_{p}]},$
obtained by combining $\varphi_{i}$ and $\varphi'_{i}$, extends to an embedding $\bar{\varphi}_{i}: \Lambda_{i}\times[a_{i}, a_{p}]\times\R^{n}\times S^{n} \; \longrightarrow \; W|_{[a_{i}, a_{p}]},$ which in turn extends to an embedding 
\begin{equation} \label{equation: full extended embedding i}
\widehat{\varphi}_{i}: \Lambda_{i}\times\R\times(a_{i}-\varepsilon_{i}, a_{p}+\varepsilon_{p})\times\R^{n}\times D^{n+1} \; \longrightarrow \; \R\times(-1, 1)^{\infty-1}.
\end{equation}
By carrying out the exact same construction for all $i = 0, \dots, p,$ we obtain embeddings $\widehat{\varphi}_{0}, \dots, \widehat{\varphi}_{p}$ as in (\ref{equation: full extended embedding i}). Applying Proposition \ref{corollary: multiple inductive disjunction}, we may arrange for
$$\widehat{\varphi}_{i}\left(\Lambda_{i}\times\R\times(a_{i}-\varepsilon_{i}, a_{p}+\varepsilon_{p})\times\{0\}\times S^{n}\right)\bigcap\widehat{\varphi}_{j}\left( \Lambda_{j}\times\R\times(a_{j}-\varepsilon_{j}, a_{p}+\varepsilon_{p})\times\{0\}\times S^{n}\right) \; = \; \emptyset$$
for all $i, j = 0, \dots, p$. Let $\Lambda = \sqcup_{i=0}^{p}\Lambda_{i}$. By forming the disjoint union of the embeddings $\widehat{\varphi}_{0}, \dots, \widehat{\varphi}_{p}$ and applying a reparametrization $(a_{i}-\varepsilon_{i}, a_{p}+\varepsilon_{p}) \cong (-6, -2)$, we obtain an embedding 
$$e: \Lambda\times\R\times(-6, -2)\times\R^{n}\times D^{n+1} \; \longrightarrow \; \R\times(-1, 1)^{\infty-1}$$

This embedding $e$ determines part of the data of an element of $\widetilde{\mb{Y}}_{0}(x)$. The other part of the necessary data is a $\theta$-structure $\ell$ on $\Lambda\times K|_{(-6,  2)}$ that restricts to the $\theta$-structure on $\Lambda\times K|_{(-6, -2)}$ given by the composition
\[\xymatrix{T(\Lambda\times K|_{(-6, -2)}) \ar[r]^-{De} & W \ar[r]^-{\ell_{W}} & \theta^{*}\gamma^{2n+1}.}\]
This extension can be obtained by same argument employed in the proof of \cite[Proposition 6.22, page 350]{GRW 14} and then by construction $(e,\ell) \in \widetilde{\mb{Y}}_{0}(x)$.
\end{proof}

Its hypotheses established, we can now apply Theorem \ref{theorem: improved flag complex theorem} to obtain Theorem \ref{theorem: contractibility of space of surgery data} as described at the beginning of this section.


\end{document}